\documentclass[a4paper, 11pt]{article}
\usepackage[utf8]{inputenc}
\usepackage[a4paper, total={7in, 10in}]{geometry}
\usepackage{mathrsfs}
\usepackage{amsfonts, amsmath, amssymb, amsthm}
\usepackage{url,hyperref}

\numberwithin{equation}{section}

\newcommand{\R}{\mathbb{R}}

\newcommand{\N}{\mathbb{N}}
\newcommand{\etalchar}[1]{$^{#1}$}

\makeatletter
\newsavebox{\@brx}
\newcommand{\llangle}[1][]{\savebox{\@brx}{\(\m@th{#1\langle}\)}
  \mathopen{\copy\@brx\kern-0.5\wd\@brx\usebox{\@brx}}}
\newcommand{\rrangle}[1][]{\savebox{\@brx}{\(\m@th{#1\rangle}\)}
  \mathclose{\copy\@brx\kern-0.5\wd\@brx\usebox{\@brx}}}
\makeatother

\newtheorem{theorem}{Theorem}
\newtheorem{definition}{Definition}
\newtheorem{proposition}{Proposition}
\newtheorem{lemma}{Lemma}
\theoremstyle{remark}
\newtheorem{remark}{Remark}

\title{\textbf{On fractional and classical hyperbolic obstacle-type problems}}
\author{
  Pedro Miguel Campos\footnote{CMAFcIO – Departamento de Matemática, Faculdade de Ciências, Universidade de Lisboa P-1749-016 Lisboa, Portugal \\Email address: \texttt{pmcampos@fc.ul.pt}}\\
  \and
  José Francisco Rodrigues\footnote{CMAFcIO – Departamento de Matemática, Faculdade de Ciências, Universidade de Lisboa P-1749-016 Lisboa, Portugal\\Email address: \texttt{jfrodrigues@ciencias.ulisboa.pt}}
}

\begin{document}

\maketitle

\vspace{-1cm}

\begin{center}
    \textit{Dedicated to Pierluigi Colli in its $65^{\text{th}}$ birthday with friendship.}
\end{center}

\begin{abstract}
    We consider weak solutions for the obstacle-type viscoelastic ($\nu>0$) and very weak solutions for the obstacle inviscid ($\nu=0$) Dirichlet problems for the heterogeneous and anisotropic wave equation in a fractional framework based on the Riesz fractional gradient $D^s$ ($0<s<1$).  We use weak solutions of the viscous problem to obtain very weak solutions of the inviscid problem when $\nu\searrow 0$. We prove that the weak and very weak solutions of those problems in the fractional setting converge as $s\nearrow 1$ to a weak solution and to a very weak solution, respectively, of the correspondent problems in the classical framework.
\end{abstract}

\section{Introduction}
Inequalities in mechanics and physics arise naturally in situations when thresholds are crossed or attained and the equations are subjected to certain constraints \cite{duvaut1976Inequalities}. Typical examples are the displacement of an elastic membrane constrained by one or two an obstacles and the phase transition in materials. While for stationary or diffusion type problems there is a large literature, the hyperbolic problems present serious challenges and have been little studied. In this article we are motivated by the example of the constrained dynamics of a string or a membrane. This type of problems with inequalities for the wave equation was already considered by \cite{amerio1975Study, schatzman1980Concave, jarusek1992Variational,bonfanti2004Convergence, eck2008Unilateral, bock2011Unilateral, bonetti2017On, bonafini2019Variational, omata2022Variational} but they are far from being well understood. In fact, besides the one-dimensional case with concave obstacle, we do not know if the solutions of these problems are unique, and even if they are not unique, if there exists a solution that preserves its initial energy. 

On the other hand, there has been a recent interest in modeling some problems in solid mechanics with nonlocal laws involving fractional space derivatives. In particular, a nonlocal theory, called Peridynamics, was developed by Silling in \cite{silling2000Reformulation, Silling2007Peridynamic}. The nonlocal theories aim to unify the mechanics of continuous and discontinuous media within a simple framework, covering not only the classical problems in elasticity, but also the formation of cracks and long-range forces. In \cite{du2013Nonlocal} a new nonlocal approach, based on the traditional vector calculus was introduced and in the recent survey \cite{du2019Nonlocal} several integral operators are considered, including those arising in models of materials occupying the whole Euclidean space $\R^d$ and whose material points can interact with each other within an infinite horizon, as the fractional Laplacian $(-\Delta)^s$ with $0<s<1$. Just like the classical Laplacian, the fractional Laplacian can also be decomposed as $(-\Delta)^s=-D^s\cdot D^s$, where $D^s$ is the  Riesz fractional gradient, i.e., the only vector-valued operator, up to multiplicative constants, that is translational and rotational invariant, continuous in the sense of distributions and $s$-homogeneous, as it was proven in \cite{silhavy2020Fractional}. These properties of the Riesz fractional gradient are well suited for anisotropic and nonhomogeneous problems, in particular in the linearization of  Eringen’s models for nonlocal elasticity, see \cite{bellido2023eringen}.

In this work we consider a unifying approach that generalizes the existence results of \cite{jarusek1992Variational} and of \cite{bonetti2017On}, for the dynamics of the viscoelastic membrane with one obstacle or two obstacles, respectively, to a class of non-homogeneous linear fractional operators $D^s\cdot(A D^s \cdot)$ and $D^s\cdot(B D^s \cdot)$, including the fractional Laplacian $(-\Delta)^s$, $0<s<1$. Extending to the fractional framework the variational methods used in those two papers for the classical case $s=1$ with the Laplacian $\Delta=D\cdot D$, where $D =D^1$ denotes the gradient, we consider the problem
\begin{equation}\label{eq:problem}
    \begin{cases}
        \ddot{u}-D^s\cdot(A D^s u)-\nu D^s\cdot (B D^s \dot u) + \beta(u)\ni g & \mbox{ in } Q_T:=(0,T)\times\Omega,\\
        u=0 & \mbox{ on } (\R^d\setminus\Omega)\times(0,T),\\
        u(0,\cdot)=w_0,\quad \dot{u}(0,\cdot)=w_1 & \mbox{ on } \Omega.
    \end{cases}
\end{equation}
Here $\Omega$ is a bounded subset of $\R^d$, $\nu\geq 0$ denotes the viscosity parameter, $T>0$ is a given final time, $D^s$ is the Riesz fractional gradient (see next section for the definition), and $\beta$ is a maximal monotone graph sufficiently general so that we can cover the cases when there is no obstacle, i.e., the case in which $\beta$ is just a non-decreasing function on $\R$, possibly with discontinuities, as well as the case of unilateral constraints, corresponding to one obstacle or two obstacles, respectively, when the domain of $\beta$ is a half-line or a bounded interval in $\R$, containing $0$. In addition to the proof of existence of weak solutions to \eqref{eq:problem} when $\nu>0$, we also study the continuous dependence of these solutions with respect to the fractional parameter $s\nearrow 1$, i.e., we prove that the weak solutions for the fractional problem, obtained through the Rothe method, converge to a weak solution of the classical problem with $s=1$. 

Considering the special case of the hyperbolic obstacle problem, namely when $u\geq0$, by showing that the weak solutions of the viscous fractional problem are also very weak solutions in the sense introduced by \cite{bonafini2019Variational}, we show that, as $\nu\searrow 0$, they converge to very weak solutions to the fractional inviscid hyperbolic obstacle problem corresponding to \eqref{eq:problem} with $\nu=0$. These very weak fractional solutions, when $s\nearrow 1$, also converge to the very weak solutions of the classical vibrating membrane with obstacle. Our results extend the approach of \cite{bonafini2019Variational} to fractional non-homogeneous operators and show that their notion of very weak solution is stable as $s\nearrow 1$.

The paper is organized in the following way: in Section \ref{sec:functional_setting} we introduce the notions of weak solutions to \eqref{eq:problem} and the notion of very weak solutions for the obstacle problem, corresponding to $\mathrm{Dom(\beta)}=[0,+\infty)$. We also discuss the functional setting, namely we recall the notion of the Riesz fractional gradient $D^s$ that was introduced in \cite{shieh2015On}, as well as its connections with the fractional Sobolev space $H^s_0(\Omega)$; in Section \ref{sec:approximating_problem} we study a family of penalized problems which, together with the estimates obtained in Section \ref{sec:apriori_estimates}, yields approximated solutions that are used in the proof of the existence of a weak solution to \eqref{eq:problem} in Section \ref{sec:existence_s_fixed}; for the obstacle problem, we obtain in Section \ref{sec:from_viscous_to_inviscid} the existence of very weak solutions with weak solutions of the viscous problem by letting $\nu\searrow 0$; finally, in Section \ref{sec:existence_s_to_1} we study the stability of the weak solutions of the general problem obtained in Section \ref{sec:existence_s_fixed} and the stability of the very weak solutions to the obstacle problem obtained in Section \ref{sec:from_viscous_to_inviscid}, as $s\nearrow 1$.

For simplicity of presentation, in \eqref{eq:problem} when $\nu>0$ we have chosen the same exponent $0<s\leq 1$ both in the elastic and in the viscoelastic terms. Nevertheless all the results of this work can be generalized to the case where the fractional parameter $s$ in the viscous terms is replaced by a possible independent exponent $r$, with $0<r\leq 1$. These changes would require to work with the fractional Sobolev spaces $H^s_0(\Omega)$ for the $u$ and $H^r_0(\Omega)$ for the $\dot u$, with similar convergences of those solutions with respect to the fractional parameters $s\nearrow 1$ and $r\nearrow 1$.

\section{The functional setting for weak and very weak solutions}\label{sec:functional_setting}

\subsection{Fractional Sobolev spaces and operators of fractional divergence form}

In this subsection we recall some results concerning the Riesz fractional gradient $D^s$, with $s\in (0,1)$, as defined in \cite{shieh2015On}, focusing on its connection with the fractional Sobolev space $H^s(\R^d)$. For smooth functions with compact support $\varphi\in C^\infty_c(\R^d)$, we define $D^s\varphi$ as
\begin{equation}
    D^s \varphi = D(I_{1-s} \varphi)
\end{equation}
where $I_{1-s}:=(-\Delta)^{s-1}$ is the Riesz potential of order $1-s$ (see \cite{grafakos2014Classical} for more information about this family of potentials).

One of the major advantages of this definition of fractional gradient is that it satisfies the duality property
\begin{equation}
    \int_{\R^d}{D^s\varphi\cdot \Phi}\,dx=-\int_{\R^d}{\varphi D^s\cdot\Phi}\,dx
\end{equation}
for functions $\Phi\in C^\infty_c(\R^d;\R^d)$ and $\varphi\in C^\infty_c(\R^d)$.
And so, as a consequence of this property and $D^s\cdot\Phi\in C^\infty(\R^d)$, we can weaken the concept of Riesz fractional gradient for functions $u\in L^1(\R^d)+L^{\infty}(\R^d)$ as the function $D^su := g\in L^1_{\mathrm{loc}}(\R^d)$ for which we have
\begin{equation}
    \int_{\R^d}{g\cdot \Phi}\,dx=-\int_{\R^d}{u D^s\cdot\Phi}\,dx, \quad \forall \Phi\in C^\infty_c(\R^s;\R^d).
\end{equation}

With this notion of weak fractional gradient, we define the fractional Sobolev space $H^s(\R^d)$, similarly to what is done in the classical setting, as
 \begin{equation}
     H^s(\R^d):=\{u\in L^2(\R^d):\, D^s u\in L^2(\R^d;\R^d)\},
\end{equation}
with the Hilbertian norm
\begin{equation}
    \|u\|_{H^s(\R^d)}=\left(\|u\|^2_{L^2(\R^d)}+\|D^s u\|^2_{L^2(\R^d;\R^d)}\right)^{1/2}.
\end{equation}
As it was observed in \cite{shieh2015On}, this space coincides with the usual fractional Sobolev space defined in terms of the Gagliardo seminorm, see \cite{di2012Hitchhikers} for more information about these spaces.

Since in \eqref{eq:problem} we are imposing homogeneous Dirichlet exterior conditions on the function $u$, we are not going to use explicitly the space $H^s(\R^d)$. So, we introduce the space
\begin{equation}
    H^s_0(\Omega)=\overline{C^\infty_c(\Omega)}^{\|\cdot\|_{H^s(\R^d)}},
\end{equation}
that consists of all functions in $H^s(\R^d)$ that can be approximated, with respect to the norm of this space, by smooth functions with compact support in $\Omega$. It is important to notice that when $\Omega$ has Lipschitz boundary, which we shall assume, this space can be identified with the space of all functions in $H^s(\R^d)$ that equal to $0$ a.e. in $\R^d\setminus\Omega$. In this work we also assume that $\Omega$ is an open bounded set.

With this characterization of the fractional Sobolev spaces $H^s_0(\Omega)$ in terms of the fractional gradient we can extend directly the classical variational methods to the fractional framework. As shown in \cite{shieh2015On}, some of the properties that can be generalized to the fractional case are the fractional Sobolev inequalities. Another useful inequality is the following fractional Poincaré inequality:
\begin{lemma}\label{lemma:poincares_inequality}
    Let $s\in(0,1)$ and $u\in H^s_0(\Omega)$. Then, there exists a positive constant $C>0$ independent of $s$ such that
    \begin{equation}
        \|u\|_{L^2(\Omega)}\leq \frac{C}{s}\|D^s u\|_{L^2(\R^d;\R^d)}.
    \end{equation}
    Consequently, the space $H^s_0(\Omega)$ can be endowed with the norm $u\mapsto \|D^s u\|_{L^2(\R^d;\R^d)}$.
\end{lemma}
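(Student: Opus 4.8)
The plan is to exploit the characterization $D^s u = D(I_{1-s}u)$ together with the classical Poincaré inequality and the smoothing/mapping properties of the Riesz potential, keeping careful track of the $s$-dependence of all constants. By density it suffices to prove the estimate for $u \in C^\infty_c(\Omega)$, and then pass to the limit in $H^s_0(\Omega)$. First I would write, for such $u$, $u = I_{s}(I_{1-s}^{-1}u)$ in the sense that $I_{1-s}u =: v$ satisfies $Dv = D^s u \in L^2(\R^d;\R^d)$, so that $v \in \dot H^1(\R^d)$ with a controlled homogeneous Sobolev norm; equivalently, on the Fourier side $\widehat{D^s u}(\xi) = i\xi\,|\xi|^{s-1}\widehat u(\xi)$, hence $|\widehat{u}(\xi)| = |\xi|^{-s}\,|\,\widehat{D^s u}(\xi)\,|$ pointwise. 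This identity is the heart of the matter: it reduces the claim to controlling $\||\xi|^{-s}\widehat f\|_{L^2}$ by $\|f\|_{L^2}$ where $f = D^s u$ is supported (after the substitution) in a way that reflects $u$ being supported in the bounded set $\Omega$.

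The key step is then a fractional Hardy-type inequality: since $u \in H^s_0(\Omega)$ with $\Omega$ bounded, one has $u = 0$ outside $\Omega$, and the fractional Hardy inequality on $H^s_0(\Omega)$ (valid for $\Omega$ bounded Lipschitz) gives $\|u/\delta^s\|_{L^2(\Omega)} \le C\|D^s u\|_{L^2}$ with $\delta(x) = \mathrm{dist}(x,\partial\Omega)$; combined with $\delta \le \mathrm{diam}(\Omega)$ this yields $\|u\|_{L^2(\Omega)} \le C(\mathrm{diam}\,\Omega)^s \|D^s u\|_{L^2}$. Alternatively, and this is the route I expect the authors take because it transparently produces the factor $1/s$, one can argue directly from the Fourier identity: fix $R>0$ with $\Omega \subset B_R$ and split $\int_{\R^d} |\xi|^{-2s}|\widehat{D^s u}|^2\,d\xi$ into $|\xi| \le \rho$ and $|\xi| > \rho$. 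On $\{|\xi|>\rho\}$ one bounds $|\xi|^{-2s} \le \rho^{-2s}$ and uses Plancherel to get $\rho^{-2s}\|D^s u\|_{L^2}^2$. On $\{|\xi|\le\rho\}$ one uses $|\widehat u(\xi)| \le (2\pi)^{-d/2}\|u\|_{L^1} \le C\|u\|_{L^2(\Omega)}|\Omega|^{1/2}$ and integrates $\int_{|\xi|\le\rho}|\xi|^{-2s}$ over $\R^d$; in dimension $d$ this integral equals $c_d\,\rho^{d-2s}/(d-2s)$, which is where small denominators — hence factors behaving like $1/s$ near the critical scaling — enter. Optimizing over $\rho$ and absorbing the resulting $\|u\|_{L^2}^2$ term into the left-hand side (possible once the coefficient is made $\le 1/2$ by the choice of $\rho$) produces $\|u\|_{L^2(\Omega)} \le \frac{C}{s}\|D^s u\|_{L^2}$ with $C$ depending only on $d$ and $\mathrm{diam}(\Omega)$, not on $s$.

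The main obstacle is tracking the $s$-uniformity: one must ensure that the constant in the Hardy/Fourier estimate does not itself blow up as $s\to 0$ or $s\to1$ except through the explicit $1/s$. For $s\to 1$ this is routine (the constant is bounded), but for small $s$ the low-frequency integral $\int_{|\xi|\le\rho}|\xi|^{-2s}d\xi$ must be handled so that its $s$-singularity is exactly captured by the $1/(d-2s)$ type factor, and one checks $1/(d-2s) \le C/s$ is false in general — so the correct source of $1/s$ is instead the normalization constant of the Riesz potential $I_{1-s}$, whose gamma-function prefactor $\Gamma(s)^{\pm1}$-type terms behave like $1/s$ as $s\searrow 0$. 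I would therefore be careful to use the precise constant in the definition $I_{1-s} = (-\Delta)^{(s-1)/2}$ adopted in \cite{shieh2015On,grafakos2014Classical}, extract its asymptotics via $\Gamma(s)\sim 1/s$, and feed that into an otherwise $s$-independent Poincaré estimate for $v = I_{1-s}u \in \dot H^1_0$ on the bounded set $\Omega$. The final "consequently" clause is then immediate: $\|D^s\cdot\|_{L^2}$ is a norm on $H^s_0(\Omega)$ because it is nonnegative, homogeneous, satisfies the triangle inequality trivially, vanishes only for $u=0$ by the inequality just proved, and is equivalent to $\|\cdot\|_{H^s(\R^d)}$ restricted to $H^s_0(\Omega)$.
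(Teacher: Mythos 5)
First, a point of comparison: the paper does not actually prove this lemma — its ``proof'' is a citation of Theorem~2.9 of Bellido--Cueto--Mora-Corral — so any self-contained argument such as yours is by construction a different route. Your overall strategy for the $L^2$ case is the right one: by density reduce to $u\in C^\infty_c(\Omega)$, use the symbol identity $\widehat{D^s u}(\xi)=2\pi i\xi\,|2\pi\xi|^{s-1}\widehat u(\xi)$, hence $|\widehat u(\xi)|=|2\pi\xi|^{-s}|\widehat{D^s u}(\xi)|$, split into low and high frequencies, and control the low frequencies through the compact support of $u$ via $|\widehat u(\xi)|\le\|u\|_{L^1(\Omega)}\le|\Omega|^{1/2}\|u\|_{L^2(\Omega)}$. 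Carried out correctly, this proves the lemma (in fact something stronger).

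The execution of the low-frequency step, however, is wrong as written, and you noticed the symptom without finding the cause. You bound the low-frequency contribution by $\sup_{|\xi|\le\rho}|\widehat u(\xi)|^2\cdot\int_{|\xi|\le\rho}|\xi|^{-2s}\,d\xi$ and compute the latter as $c_d\rho^{d-2s}/(d-2s)$; but this integral diverges whenever $2s\ge d$ (e.g.\ $d=1$, $s\ge 1/2$), and it is in any case the wrong quantity. On $\{|\xi|\le\rho\}$ the weight cancels exactly: $|2\pi\xi|^{-2s}|\widehat{D^s u}(\xi)|^2=|\widehat u(\xi)|^2$, so the low-frequency term is simply $\int_{|\xi|\le\rho}|\widehat u|^2\,d\xi\le \omega_d\rho^d\,|\Omega|\,\|u\|^2_{L^2(\Omega)}$, with $\omega_d$ the volume of the unit ball and no singular factor at all. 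Choosing $\rho^d=(2\omega_d|\Omega|)^{-1}$, independent of $s$, absorbs this into the left-hand side, and the high-frequency part gives $(2\pi\rho)^{-2s}\|D^s u\|^2_{L^2(\R^d;\R^d)}\le\max\bigl(1,(2\pi\rho)^{-2}\bigr)\|D^s u\|^2_{L^2(\R^d;\R^d)}$ uniformly in $s\in(0,1)$. Thus the correct conclusion is a constant \emph{independent of $s$}, of which $C/s$ is a weaker consequence; your ensuing hypothesis that the $1/s$ must be recovered from a $\Gamma(s)$ in the normalization of $I_{1-s}$ is a red herring — with the Shieh--Spector convention the multiplier of $D^s$ is exactly $2\pi i\xi|2\pi\xi|^{s-1}$ and no stray constant survives. (The $1/s$ in the cited Theorem~2.9 arises from an $L^p$ argument via the Riesz-potential representation formula, which is not needed at $p=2$; likewise the fractional Hardy detour is unnecessary and has its own $s$-dependent pitfalls.) So: right idea, but the step you yourself flagged (``$1/(d-2s)\le C/s$ is false'') is a genuine error whose resolution is to drop the weight $|\xi|^{-2s}$ from the low-frequency piece, not to hunt for it in the kernel constants.
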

\begin{proof}
    Check \cite[Theorem~2.9]{bellido2021Gamma}.
\end{proof}
This property allow us to study the Dirichlet problem for some classes of fractional partial differential equations. In particular, the fractional Laplacian $(-\Delta)^s$, for sufficiently regular functions $\varphi$, at a point $x\in\R^d$ can be written as
\begin{equation}
    -D^s\cdot D^s \varphi(x)=(-\Delta)^s \varphi(x)= c_{d,s}\int_{\R^d}{\frac{\varphi(x)-\varphi(y)}{|x-y|^{d+2s}}}\,dx,
\end{equation}
where $c_{d,s}$ is a normalizing constant, allowing the fractional Laplacian to be written in the fractional divergence form. In fact, this approach allows us to consider many other fractional differential operators, linear or non-linear. To study \eqref{eq:problem}, we restrict ourselves to the linear anisotropic operators in fractional divergence form
\begin{equation}
    \mathscr{A}^s:H^s_0(\Omega)\to H^{-s}(\Omega)\qquad  \mbox{ and } \quad \mathscr{B}^s:H^s_0(\Omega)\to H^{-s}(\Omega),
\end{equation}
which are defined in the sense of distributions as
\begin{equation}\label{eq:definition_of_operators}
    \mathscr{A}^s \varphi=-D^s\cdot(AD^s \varphi)\quad \mbox{ and }\quad  \mathscr{B}^s \varphi=-D^s\cdot(BD^s \varphi)
\end{equation}
for matrix-valued functions $A, B:\R^d\to\R^{d\times d}$ that are assumed to be bounded, measurable, strictly elliptic and satisfying, for all $\eta, \zeta \in \R^d$, 
\begin{equation}
    a_*|\eta|^2\leq A(x)\eta\cdot\eta,\quad A(x)\eta\cdot\zeta\leq a^*|\eta||\zeta|\qquad \mbox{ and }\qquad
    b_*|\eta|^2\leq B(x)\eta\cdot\eta,\quad B(x)\eta\cdot\zeta\leq b^*|\eta||\zeta|,
\end{equation}
and also $B$ to be symmetric.

Another interesting property of the Riesz fractional gradient is its continuous behaviour as $s\nearrow 1$. Although $D^s$ is a nonlocal operator and  $D=D^1$ is local, it can be easily proved, using methods from Fourier analysis (see for example \cite[Lemma~3.7]{lo2023Class}), that
\begin{equation}
    D^s \varphi\to D \varphi \quad \mbox{ in }L^2(\R^d;\R^d) \quad \mbox { as } s\nearrow 1,
\end{equation}
provided that $\varphi\in H^1(\R^d)$. This idea can then be generalized for functions depending on time:
\begin{lemma}\label{lemma:strong_convergence_Ds_D_fixed_function}
    Let $T>0$, $s\nearrow 1$ and $\varphi\in L^2(0,T; H^s_0(\Omega))$. Then $D^s \varphi\to D\varphi$ in $L^2(0,T;L^2(\Omega))$.
\end{lemma}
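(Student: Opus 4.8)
The plan is to deduce the Bochner-space statement from the pointwise-in-$s$ fact recalled just above this lemma, namely that $D^s\psi\to D\psi$ in $L^2(\R^d;\R^d)$ as $s\nearrow1$ for any fixed $\psi\in H^1(\R^d)$ (see \cite[Lemma~3.7]{lo2023Class}), applied at almost every time $t\in(0,T)$, and then to upgrade the resulting a.e.-in-$t$ convergence to convergence in $L^2(0,T;L^2(\Omega;\R^d))$ by dominated convergence. We read the hypothesis as $\varphi\in L^2\big(0,T;H^1_0(\Omega)\big)$, which is what makes $D\varphi$ meaningful; this does give $\varphi\in L^2\big(0,T;H^s_0(\Omega)\big)$ for every $s\in(0,1)$, since $H^1_0(\Omega)\hookrightarrow H^s_0(\Omega)$ continuously (on the Fourier side $1+|2\pi\xi|^{2s}\le 1+|2\pi\xi|^2$, together with the characterization of $H^s_0(\Omega)$ as those functions of $H^s(\R^d)$ vanishing a.e.\ outside $\Omega$). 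As the limit will be independent of how $s$ tends to $1$, it is enough to fix an arbitrary sequence $s_n\nearrow1$ and prove $D^{s_n}\varphi\to D\varphi$ in $L^2(0,T;L^2(\Omega;\R^d))$.

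Two ingredients are required. The first is an $s$-uniform domination. Extending $\varphi(t)$ by zero to $\R^d$ and using that, by Plancherel's theorem, $D^s$ acts as a Fourier multiplier of modulus $|2\pi\xi|^s$ together with $|2\pi\xi|^{2s}\le 1+|2\pi\xi|^2$ for all $\xi\in\R^d$ and $s\in(0,1)$, we get $\|D^s\psi\|_{L^2(\R^d;\R^d)}\le\|\psi\|_{H^1(\R^d)}$ for every $\psi\in H^1(\R^d)$, a bound uniform in $s$. In particular $t\mapsto D^{s_n}\varphi(t)$ is strongly measurable (as $D^{s_n}$ is bounded linear on $H^1_0(\Omega)$) and
\begin{equation*}
    \|D^{s_n}\varphi(t)-D\varphi(t)\|_{L^2(\R^d;\R^d)}^2\le 2\|\varphi(t)\|_{H^1(\R^d)}^2+2\|D\varphi(t)\|_{L^2(\R^d;\R^d)}^2=:h(t),
\end{equation*}
with $h\in L^1(0,T)$ because $\varphi\in L^2(0,T;H^1_0(\Omega))$. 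The second ingredient is the pointwise convergence: for a.e.\ $t$ one has $\varphi(t)\in H^1_0(\Omega)\subset H^1(\R^d)$, so the cited scalar result gives $\|D^{s_n}\varphi(t)-D\varphi(t)\|_{L^2(\R^d;\R^d)}\to 0$ as $n\to\infty$. The dominated convergence theorem then yields $\int_0^T\|D^{s_n}\varphi(t)-D\varphi(t)\|_{L^2(\R^d;\R^d)}^2\,dt\to0$, i.e.\ $D^{s_n}\varphi\to D\varphi$ in $L^2(0,T;L^2(\R^d;\R^d))$ and hence in $L^2(0,T;L^2(\Omega;\R^d))$. Since $(s_n)$ was arbitrary and the limit $D\varphi$ does not depend on it, the convergence holds as $s\nearrow1$.

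The only genuinely delicate point is securing the $s$-uniform domination that lets us invoke dominated convergence; everything else is a routine transfer of the scalar-in-time fact to a Bochner space. A density argument — approximating $\varphi$ by smooth functions and splitting the error in three — is also conceivable, but the convergence $D^s\to D$ need not be uniform over a dense set, which makes the dominated-convergence route the cleaner one.
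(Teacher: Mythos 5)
Your proof is correct and takes essentially the same route as the paper's: both arguments rest on Plancherel, an $s$-uniform domination of the Fourier multiplier of $D^s$, and the dominated convergence theorem (the paper applies DCT jointly in $(t,\xi)$ on the Fourier side via $(|2\pi\xi|^{s-1}-1)^2\leq(|2\pi\xi|^{-1}-1)^2$, whereas you apply it in $t$ alone after invoking the fixed-function convergence at a.e.\ time, which is only an organizational difference). Your reading of the hypothesis as $\varphi\in L^2(0,T;H^1_0(\Omega))$ is the right one, since the paper's own proof likewise needs $D\varphi\in L^2(0,T;L^2(\Omega))$ for the dominating function to be integrable.
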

\begin{proof}
    From Plancherel's theorem and from the fact that  $(|2\pi\xi|^{s-1}-1)^2\leq (|2\pi\xi|^{-1}-1)^2$, we have
    \begin{align*}
        \left(\int_0^T{\int_{\R^d}{4\pi^2|\xi|^2(|2\pi\xi|^{s-1}-1)^2|\hat{\varphi}(t,\xi)|^2}\,d\xi}\,dt\right)^{1/2}
        &\leq \left(\int_0^T{\int_{\R^d}{4\pi^2|\xi|^2(|2\pi\xi|^{-1}-1)^2|\hat{\varphi}(t,\xi)|^2}\,d\xi}\,dt\right)^{1/2}\\
        &=\left(\int_0^T{\int_{\R^d}{(\varphi-D\varphi)^2}\,dx}\,dt\right)^{1/2}\\
        &\leq \|\varphi\|_{L^2(0,T;L^2(\Omega))}+\|D\varphi\|_{L^2(0,T;L^2(\Omega))}.
    \end{align*}
    The result then follows by applying the dominated convergence theorem as well as Plancherel's theorem.
\end{proof}

In order to study the stability properties of the solutions of \eqref{eq:problem}, namely their convergence when $s\nearrow 1$, we also need a result that connects the fractional gradient of these solutions to the classical gradient of the limit of these functions:
\begin{lemma}\label{lemma:bellido_compactness}
    Let us consider a sequence  such that $s\nearrow 1$ and let $u_s\in H^s_0(\Omega)$ for each $s$, such that $\|D^s u_s\|_{L^2(\R^d;\R^d)}$ is uniformly bounded with respect to $s$, $\sigma<s<1$. Then, there exists a function $u\in H^1_0(\Omega)$ and a subsequence $\{s_n\}_{n\in\N}$ such that
    \begin{equation}
        u_{s_n}\to u \mbox{ in } H^\sigma_0(\Omega) \quad \mbox{ and } \quad D^{s_n}u_{s_n}\rightharpoonup Du \mbox{ in } L^2(\R^d;\R^d),
    \end{equation}
    as $n\to\infty$.
\end{lemma}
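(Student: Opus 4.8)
The plan is to combine a compactness result for the family $\{D^s\}$ as $s \nearrow 1$ (of the type established in the $\Gamma$-convergence literature, e.g. \cite{bellido2021Gamma}) with the Poincaré inequality of Lemma \ref{lemma:poincares_inequality} and standard weak-compactness arguments. First I would use Lemma \ref{lemma:poincares_inequality}: since $\|D^s u_s\|_{L^2(\R^d;\R^d)}$ is uniformly bounded for $\sigma < s < 1$ and the Poincaré constant $C/s$ is bounded on this range (as $s$ stays away from $0$), the sequence $\{u_s\}$ is bounded in $L^2(\Omega)$, hence bounded in $H^s_0(\Omega)$ uniformly. One then invokes the known compact-embedding/compactness statement (cf. \cite[Theorem~1.2 and Section~6]{bellido2021Gamma}) asserting that a family bounded in $H^s_0(\Omega)$ uniformly in $s$, with $s \nearrow 1$, is precompact in $H^\sigma_0(\Omega)$ for any fixed $\sigma < 1$; this yields a subsequence $u_{s_n} \to u$ strongly in $H^\sigma_0(\Omega)$ for some $u$, and in particular $u_{s_n} \to u$ in $L^2(\Omega)$.

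Next I would identify the limit of the gradients. Since $\{D^{s_n} u_{s_n}\}$ is bounded in $L^2(\R^d;\R^d)$, after passing to a further subsequence we have $D^{s_n} u_{s_n} \rightharpoonup V$ weakly in $L^2(\R^d;\R^d)$ for some $V$. To show $V = Du$ (in particular that $u \in H^1_0(\Omega)$), I would pass to the limit in the distributional identity
\begin{equation*}
    \int_{\R^d} D^{s_n} u_{s_n} \cdot \Phi \, dx = -\int_{\R^d} u_{s_n} \, D^{s_n}\!\cdot \Phi \, dx, \qquad \Phi \in C^\infty_c(\R^d;\R^d).
\end{equation*}
On the left, weak convergence gives $\int V \cdot \Phi$. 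On the right, $u_{s_n} \to u$ strongly in $L^2$, while $D^{s_n}\!\cdot\Phi \to D\cdot\Phi$ strongly in $L^2$ — this last convergence is exactly the content of the Fourier-analytic fact recalled before Lemma \ref{lemma:strong_convergence_Ds_D_fixed_function} (applied to the smooth compactly supported $\Phi$), so the product converges and the right-hand side tends to $-\int u \, D\cdot\Phi$. Hence $\int V \cdot \Phi = -\int u \, D\cdot\Phi$ for all test fields $\Phi$, which means $Du = V \in L^2(\R^d;\R^d)$ in the distributional sense; since $u = 0$ a.e. outside $\Omega$ (as a limit of $H^\sigma_0(\Omega)$ functions and $\Omega$ has Lipschitz boundary), we get $u \in H^1_0(\Omega)$ and $D^{s_n} u_{s_n} \rightharpoonup Du$ in $L^2(\R^d;\R^d)$.

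The main obstacle is the compactness step producing strong convergence in $H^\sigma_0(\Omega)$: unlike the scalar $L^2$ compactness, this requires a uniform-in-$s$ control that the fractional seminorms do not degenerate, and is genuinely a nonlocal-to-nonlocal compactness statement rather than a soft consequence of Rellich–Kondrachov. I would lean on the results of \cite{bellido2021Gamma} (and related work on the $\Gamma$-convergence of the fractional gradient functionals to their local counterparts) for this; if one wanted a self-contained argument, the route would be via a uniform bound on Gagliardo seminorms $[u_s]_{W^{\sigma,2}}$ obtained by interpolating the uniform $H^s$ bound, followed by the classical compact embedding $W^{\sigma,2}(\Omega) \hookrightarrow\hookrightarrow L^2(\Omega)$ together with a careful estimate showing the $\sigma$-seminorms also converge. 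The remaining steps — weak compactness and the distributional limit identification — are routine.
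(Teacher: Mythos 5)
Your argument is correct and rests on the same key external ingredient as the paper, whose proof is simply a citation of \cite[Theorem~4.2]{bellido2021Gamma} (together with \cite[Proposition~2.4.11]{campos2021Lions}) for exactly this compactness statement. The extra duality argument you give to identify the weak limit of $D^{s_n}u_{s_n}$ with $Du$ — passing to the limit in $\int_{\R^d} D^{s_n}u_{s_n}\cdot\Phi\,dx=-\int_{\R^d}u_{s_n}\,D^{s_n}\!\cdot\Phi\,dx$ using the strong $L^2$ convergence of $u_{s_n}$ and of $D^{s_n}\!\cdot\Phi$ — is sound and correctly fills in the step the paper delegates to the second reference.
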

\begin{proof}
    See \cite[Theorem~4.2]{bellido2021Gamma} and \cite[Proposition~2.4.11]{campos2021Lions}.
\end{proof}

\subsection{Obstacles, subdifferentials and bounded variation}

Physically, when the membrane touches the obstacle, the latter is going to exert a force on the former causing it to suddenly change the direction of its motion in that point and making the velocity be discontinuous. To represent that force we use a maximal monotone graph $\beta$ in $\R\times\R$. We shall assume that $0\in\beta(0)$ and that the domain of $\beta$ can be any sub-interval of $\R$, the former being a compatibility hypothesis with the exterior conditions and the latter describing the nature of the problem. In fact, if there is no obstacle, then $\beta$ is a monotone non-decreasing function, possibly with a finite number of discontinuities, which has $\mathrm{Dom(\beta)}=\R$. In the presence of an obstacle, $u\geq a$ or $u\leq b$, or of two obstacles $a\leq u\leq b$, for $a,b\in \R$ and $a\leq0\leq b$, we have $\mathrm{Dom(\beta)}=[a,+\infty)$, $\mathrm{Dom(\beta)}=(-\infty, b]$ and $\mathrm{Dom(\beta)}=[a,b]$, respectively.

Recalling the theory of maximal monotone operators, see \cite[Chapter~2]{barbu2010Nonlinear},  $\beta$ in $\R\times\R$ can be written as the subdifferential of a lower semicontinuous, convex, proper function $j$ on $\R$, i.e., for each $x\in\R$
\begin{equation}
    \xi\in\beta(x)=\partial j(x) \Leftrightarrow j(x)-j(y)\leq \xi\cdot (x-y)\quad \mbox{ for all } y\in\R,
\end{equation}
with $\mathrm{Dom}(j)=\mathrm{Dom}(\beta)$ and $j(0)=\min{j}=0$. As an example, for the obstacle problems, we denote the indicator function of the interval $[a,b]$ by $j=I_{[a,b]}$ and its subdifferential by $\beta=\partial I_{[a,b]}$, using the abuse of notation $b=+\infty$ and $a=-\infty$ for the lower and upper obstacle problems, respectively.

We indicate also by $\beta$ the maximal monotone operator in the space  $L^2(0,t; L^2(\Omega))$ for $t \in(0,T]$ corresponding to the subdifferential of the functional
\begin{equation}
    \begin{aligned}
        \mathcal{J}_t: L^2(0,t; L^2(\Omega))&\to [0,+\infty]\\
        u&\mapsto \int_0^t{\int_\Omega{j(u)}\,dx}\,d\tau.
    \end{aligned}
\end{equation}
Formally we can now write equation \eqref{eq:problem} with a function $\xi\in \beta(u)$ such that it becomes
\begin{equation}\label{eq:equation_to_define_spaces}
     \ddot{u}-D^s\cdot(A D^s u)-\nu D^s\cdot (B D^s \dot u) + \xi= g.
\end{equation}
By testing this equation with a sufficiently regular function $\varphi$ and then using integration by parts in time, provided that $\xi\in L^2(0,t;L^2(\Omega))$, we get
\begin{equation}
    \begin{aligned}
        \int_{\Omega}{\dot{u}(t) \varphi(t)}\,dx
        -\int_{\Omega}{\dot{u}(0) \varphi(0)}\,dx
        &-\int_0^t{\int_\Omega{\dot{u} \dot{\varphi}}\,dx}\,d\tau+\int_0^t{\int_\Omega{\xi\varphi}\,dx}\,d\tau\\
        &\quad +\int_0^t{\int_{\R^d}{\left(A D^s u\cdot D^s \varphi + \nu B D^s \dot{u} \cdot D^s \varphi\right)}\,dx}\,d\tau
        =\int_0^t{\int_{\Omega}{g\varphi}\,dx}\,d\tau.
    \end{aligned}
\end{equation}
However, we are not able to prove the regularity $\xi\in L^2(0,t;L^2(\Omega))$, and therefore this condition must be weaken. Following \cite{bonetti2017On}, we introduce the functional spaces
\begin{equation}
    \mathcal{V}_{s,t}:= H^1(0,t; L^2(\Omega))\cap L^2(0,t; H^s_0(\Omega))
\end{equation}
for $0<s\leq1$ and $0<t<T$. For simplicity, when $t=T$, we simply write
\begin{equation}
    \mathcal{V}_s:=\mathcal{V}_{s,T}=H^1(0,T; L^2(\Omega))\cap L^2(0,T; H^s_0(\Omega)).
\end{equation}
Indeed, for the analysis in this article, we need to deal with $\xi$ being an element of $\mathcal{V}'_{s,t}$, the dual of $\mathcal{V}_{s,t}$. So, when we restrict $\mathcal{J}_t$ to functions on $\mathcal{V}_{s,t}$, this new functional is convex and lower semicontinuous on $\mathcal{V}_{s,t}$ and hence, we can weaken the constraint $\beta$ to a new maximal monotone graph $\beta_{s,t}$ defined as
\begin{equation}
    \xi \in \beta_{s,t}(u)\Leftrightarrow u\in \mathcal{V}_{s,t},\,\,\,\xi\in \mathcal{V}'_{s,t} \,\,\,\mbox{ and }\,\,\, \mathcal{J}_t|_{\mathcal{V}_{s,t}}(u)-\mathcal{J}_t|_{\mathcal{V}_{s,t}}(v)\leq\langle \xi, u-v\rangle_{\mathcal{V}'_{s,t}\times \mathcal{V}_{s,t}}, \quad \mbox{ for all } v\in \mathcal{V}_{s,t}.
\end{equation}
Similarly, we also define $\mathcal{J}:=\mathcal{J}_{T}$ and $\beta_s:=\beta_{s,t}$. Moreover, this operator also has good stability properties, useful when for the proof of Theorem \ref{thm:existence_weak_solutions}:
\begin{lemma}
    Let $\beta_n$ be a sequence of monotone operators that converges in the sense of graphs, in $\mathcal{V}'_{s,t}\times\mathcal{V}_{s,t}$, to $\beta_{s,t}$. Let us also assume that we have a sequence of functions $v_n\rightharpoonup v$ in $\mathcal{V}_{s,t}$ and a sequence of distributions $\xi_n\rightharpoonup \xi$ in $\mathcal{V}'_{s,t}$, such that $\xi_n=\beta_n(v_n)$ for each $n\in\N$. Then, if in addition we have that
    \begin{equation}
        \limsup{\langle \eta_n, v_n\rangle_{\mathcal{V}'_{s,t}\times\mathcal{V}_{s,t}}}\leq \langle \eta, v\rangle_{\mathcal{V}'_{s,t}\times\mathcal{V}_{s,t}},
    \end{equation}
    we can conclude that $\xi\in \beta_{s,t}(v)$.
\end{lemma}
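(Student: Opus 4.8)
The statement is a Minty-type demiclosedness property, and I would prove it by exploiting that $\beta_{s,t}$, being the subdifferential of the convex, lower semicontinuous, proper functional $\mathcal{J}_t|_{\mathcal{V}_{s,t}}$ on the Hilbert space $\mathcal{V}_{s,t}$, is \emph{maximal} monotone. Since $v_n\rightharpoonup v$ in $\mathcal{V}_{s,t}$ and $\xi_n\rightharpoonup\xi$ in $\mathcal{V}'_{s,t}$, we already have $v\in\mathcal{V}_{s,t}$ and $\xi\in\mathcal{V}'_{s,t}$, so it suffices to check that $(v,\xi)$ is monotonically related to the whole graph of $\beta_{s,t}$; maximality then yields $\xi\in\beta_{s,t}(v)$.

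Concretely, I would fix an arbitrary pair $(w,\zeta)$ with $\zeta\in\beta_{s,t}(w)$ and use the convergence of $\beta_n$ to $\beta_{s,t}$ in the sense of graphs in $\mathcal{V}'_{s,t}\times\mathcal{V}_{s,t}$ to produce $(w_n,\zeta_n)$ with $\zeta_n\in\beta_n(w_n)$, $w_n\to w$ strongly in $\mathcal{V}_{s,t}$ and $\zeta_n\to\zeta$ strongly in $\mathcal{V}'_{s,t}$. Monotonicity of each $\beta_n$ applied to the pairs $(v_n,\xi_n)$ and $(w_n,\zeta_n)$ gives, for every $n$,
\[
0\le\langle\xi_n-\zeta_n,\,v_n-w_n\rangle_{\mathcal{V}'_{s,t}\times\mathcal{V}_{s,t}}
=\langle\xi_n,v_n\rangle-\langle\xi_n,w_n\rangle-\langle\zeta_n,v_n\rangle+\langle\zeta_n,w_n\rangle .
\]

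Then I would pass to the limit term by term. The last three pairings converge to $\langle\xi,w\rangle$, $\langle\zeta,v\rangle$, $\langle\zeta,w\rangle$ respectively, by the weak--strong (resp. strong--strong) duality couplings available here: $\xi_n\rightharpoonup\xi$ in $\mathcal{V}'_{s,t}$ against $w_n\to w$ in $\mathcal{V}_{s,t}$, and $\zeta_n\to\zeta$ in $\mathcal{V}'_{s,t}$ against $v_n\rightharpoonup v$ in $\mathcal{V}_{s,t}$. For the diagonal term $\langle\xi_n,v_n\rangle$ — the only one pairing two merely weakly convergent sequences — I would invoke precisely the standing hypothesis $\limsup\langle\xi_n,v_n\rangle\le\langle\xi,v\rangle$ (reading $\eta_n=\xi_n$). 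Taking $\limsup$ in the displayed inequality and using that each term of the sequence is nonnegative, one obtains
\[
0\le\langle\xi-\zeta,\,v-w\rangle_{\mathcal{V}'_{s,t}\times\mathcal{V}_{s,t}}\qquad\text{for every }w\in\mathcal{V}_{s,t},\ \zeta\in\beta_{s,t}(w),
\]
whence $\xi\in\beta_{s,t}(v)$ by maximal monotonicity.

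The routine parts are the algebraic expansion and the weak--strong limit passages; the single place that actually uses structure is the control of $\langle\xi_n,v_n\rangle$, which is exactly why the $\limsup$ condition is imposed as a hypothesis. The only mild point to pin down rigorously is the precise meaning of ``convergence in the sense of graphs'', which I read as: every element of $\mathrm{graph}(\beta_{s,t})$ is the strong limit in $\mathcal{V}'_{s,t}\times\mathcal{V}_{s,t}$ of elements of $\mathrm{graph}(\beta_n)$ — and it is this strong approximation of the test pair $(w,\zeta)$ that makes the argument close.
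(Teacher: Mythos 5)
Your proposal is correct and is essentially the argument the paper invokes by citation: the paper's proof simply refers to \cite[Proposition~2.59]{attouch1984Variational} (and Step 4 of the proof of Theorem 2.5 in \cite{bonetti2017On}), and that proposition is proved by exactly the Minty-type scheme you write out — strong approximation of an arbitrary pair in the limit graph via graph convergence, monotonicity of each $\beta_n$, weak–strong passage in the off-diagonal pairings, the $\limsup$ hypothesis for the diagonal term, and maximal monotonicity of the subdifferential to conclude. You also correctly read the statement's $\eta_n,\eta$ as $\xi_n,\xi$.
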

\begin{proof}
    This is an easy consequence of \cite[Proposition~2.59]{attouch1984Variational}. See the argument in Step 4 of the proof of Theorem 2.5 of \cite{bonetti2017On}.
\end{proof}

When dealing with these spaces $\mathcal{V}'_{s,t}$ and $\mathcal{V}'_{s}$ we need to be able to relate them. For that, following \cite{bonetti2017On}, let us introduce also the space
\begin{equation}
    \overline{\mathcal{V}}_{s,t}=\left\{v\in\mathcal{V}_{s,t}:\, v\equiv 0 \mbox{ on } \{t\}\times\Omega \mbox{ in the sense of traces}\right\}
\end{equation}
which is a closed subspace of $\mathcal{V}_{s,t}$. Then, for a distribution $\eta\in\mathcal{V}'_s$, we define its restriction to $(0,t)$ by setting
\begin{equation}\label{eq:restriction_distribution}
    \langle\eta|_{(0,t)},\varphi\rangle_{\mathcal{V}'_{s,t}\times\mathcal{V}_{s,t}}:=\langle\eta,\Tilde{\varphi}\rangle_{\mathcal{V}'_{s}\times\mathcal{V}_{s}}
\end{equation}
for $\varphi\in \overline{\mathcal{V}}_{s,t}$ and $\Tilde{\varphi}$ being its extension by $0$ for time larger than $t$.

\vspace{3mm}

Returning to the observation that we made in the beginning of this subsection about the behavior of the membrane when it hits the obstacle, we remark that the sudden change of direction of motion is directly related to an instantaneous abrupt change of velocity. This means that we should not expect the velocity to be continuous, but only to have left and right derivatives. Hence, we recall some properties concerning the space of function of bounded variation. Let $X$ be a Banach space and
\begin{equation}
    BV(0,T;X)= \left\{f:[0,T]\to X:\,\mathrm{var}_0^T(f):=\sup{\sum_{j=1}^n{\|f(t_j)-f(t_{j-1})\|_X}}<\infty\right\}
\end{equation}
where the supremum is taken over all finite partitions $0\leq t_0<t_1<...<t_n\leq T$. 
The property of these spaces regarding the left and right limits can be summarized in the following lemma:

\begin{lemma}
    Let $X$ be a Banach space and $f\in BV(0,T; X)$. Then for every $t\in[0,T)$ there exists a function in $X$ called the right-limit of $f$ at $t$, which we denote by $f(t^+)$, satisfying
    \begin{equation}
        \lim_{\tau\searrow t}{f(\tau)}\to f(t^+) \mbox{ in } X.
    \end{equation}
    Similar, there exists a left-limit if $f$ for every $t\in(0,T]$, denoted $f(t^-)$, for which
    \begin{equation}
        \lim_{\tau\nearrow t}{f(\tau)}\to f(t^-) \mbox{ in } X.
    \end{equation}
\end{lemma}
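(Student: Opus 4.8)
The statement to prove is the standard fact that a function of bounded variation valued in a Banach space $X$ has one-sided limits everywhere. The plan is to reduce the existence of one-sided limits to the completeness of $X$ by showing that the relevant net of values is Cauchy, using that the total variation controls sums of increments. I would argue only the right-limit case at a fixed $t\in[0,T)$; the left-limit case at $t\in(0,T]$ is entirely symmetric, replacing increasing sequences $\tau\searrow t$ by decreasing sequences $\tau\nearrow t$.

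First I would introduce the monotone ``variation function'' $V(r):=\mathrm{var}_0^r(f)$ for $r\in[0,T]$, which is well defined because $f\in BV(0,T;X)$, and which is non-decreasing in $r$ by additivity of the variation over concatenated partitions. Being bounded and monotone, $V$ has a right-limit $V(t^+):=\lim_{r\searrow t}V(r)=\inf_{r>t}V(r)\in\R$ at the point $t$. The key inequality is that for any $t<\tau_1<\tau_2\le T$ one has
\begin{equation}
    \|f(\tau_2)-f(\tau_1)\|_X\le V(\tau_2)-V(\tau_1),
\end{equation}
since $\{0<\tau_1<\tau_2\}$ is an admissible partition of $[0,\tau_2]$ whose contribution on the last interval is $\|f(\tau_2)-f(\tau_1)\|_X$, and the variation on $[0,\tau_1]$ is exactly $V(\tau_1)$. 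Now take any sequence $\tau_n\searrow t$ with $\tau_n>t$; for $m,n$ large the right-hand side above is $|V(\tau_m)-V(\tau_n)|$, which tends to $0$ because $V(\tau_n)\to V(t^+)$. Hence $(f(\tau_n))_n$ is Cauchy in $X$, so it converges to some limit; a standard argument (interlacing two sequences) shows the limit is independent of the chosen sequence $\tau_n\searrow t$, and therefore $\lim_{\tau\searrow t}f(\tau)$ exists in $X$. We define $f(t^+)$ to be this common value, and the analogous construction with the left-variation limit $V(t^-)$ gives $f(t^-)$.

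The only ``obstacle'' here is really bookkeeping: one must be careful that $V$ is finite and non-decreasing (which follows from the definition of $\mathrm{var}_0^T$ by comparing partitions), and that the interlacing argument correctly shows uniqueness of the one-sided limit rather than merely convergence along a subsequence. Neither of these is deep; the proof is a textbook application of completeness of $X$ together with the subadditivity/monotonicity of the variation, and one could alternatively simply cite a standard reference on vector-valued BV functions (e.g. the treatment in Brezis' book or in monographs on evolution equations). I would present it in the short self-contained form above so the paper remains reasonably closed.
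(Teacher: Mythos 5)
Your proof is correct. The paper does not actually prove this lemma; it simply cites Proposition~4.2 of Moreau's \emph{Bounded variation in time}, and the argument you give (monotonicity and boundedness of the variation function $V$, the increment bound $\|f(\tau_2)-f(\tau_1)\|_X\le V(\tau_2)-V(\tau_1)$, Cauchyness along $\tau_n\searrow t$, completeness of $X$, and interlacing to get sequence-independence) is precisely the standard proof of that cited result, with no gaps.
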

\begin{proof}
    Check \cite[Proposition~4.2]{moreau1988Bounded}
\end{proof}

In addition, the space of functions with bounded variation also has other properties that are important for our analysis. One of such results is the generalization of Helly's selection lemma:
\begin{lemma}\label{lemma:generalized_helly_selection}
    Let $X$ be a separable, reflexive Banach space with separable dual and $\{u_j\}\subset BV(0,T;X)$ uniformly with respect to $j$. Then, there exists a subsequence, still denoted $u_j$, and a function $u\in BV(0,T;X)$ such that $u_j(t)\overset{\ast}{\rightharpoonup} u(t)$ in $X$ for every $t\in[0,T]$.
\end{lemma}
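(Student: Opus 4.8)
The plan is to run the standard three-step argument for Helly-type selection theorems: a diagonal extraction giving weak convergence on a countable dense set of times, an extension of that convergence to all of $[0,T]$ using a uniform control of the variation, and a lower-semicontinuity argument placing the limit in $BV(0,T;X)$. Write $V:=\sup_j\mathrm{var}_0^T(u_j)<\infty$; the uniform bound in $BV(0,T;X)$ also gives $M:=\sup_j\sup_{t\in[0,T]}\|u_j(t)\|_X<\infty$, since $\|u_j(t)\|_X\le\|u_j(0)\|_X+\mathrm{var}_0^t(u_j)$. As a preliminary reduction, I would apply the classical scalar Helly selection theorem to the nondecreasing, uniformly bounded functions $t\mapsto V_j(t):=\mathrm{var}_0^t(u_j)$ to extract a subsequence along which $V_j(t)\to V_\infty(t)$ for every $t\in[0,T]$, with $V_\infty$ nondecreasing; let $N\subset[0,T]$ be its (at most countable) set of discontinuity points.

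Next, fix a countable dense set $D\subset[0,T]$ containing $0$, $T$ and $N$. For each $t\in D$ the sequence $\{u_j(t)\}_j$ is bounded in $X$, and since $X$ is reflexive (equivalently, bounded subsets of $X=(X^*)^*$ are weak-$\ast$ metrizable because $X^*$ is separable), bounded sequences in $X$ admit weakly convergent subsequences; a diagonal procedure then yields a further subsequence, still denoted $u_j$, with $u_j(t)\rightharpoonup\tilde u(t)$ in $X$ for every $t\in D$. For $t\in[0,T]\setminus D$ — hence a continuity point of $V_\infty$ — I claim $\{u_j(t)\}_j$ is weakly convergent as well: given $\varphi\in X^*$ and $\varepsilon>0$, continuity of $V_\infty$ at $t$ provides $t_k\in D\cap(t-\delta,t)$ and $t_l\in D\cap(t,t+\delta)$ with $V_\infty(t_l)-V_\infty(t_k)<2\varepsilon$, so for $j$ large $\|u_j(t)-u_j(t_k)\|_X\le V_j(t)-V_j(t_k)\le V_j(t_l)-V_j(t_k)<2\varepsilon$, whence $|\langle\varphi,u_j(t)-u_j(t_k)\rangle|\le 2\varepsilon\|\varphi\|_{X^*}$; combined with the convergence of $\langle\varphi,u_j(t_k)\rangle$ this makes $(\langle\varphi,u_j(t)\rangle)_j$ Cauchy in $\R$. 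Its limit is a bounded linear functional on $X^*$ of norm $\le M$, hence by reflexivity an element $u(t)\in X$ with $u_j(t)\rightharpoonup u(t)$. Setting $u(t):=\tilde u(t)$ on $D$, we obtain $u_j(t)\overset{\ast}{\rightharpoonup}u(t)$ in $X$ (weak and weak-$\ast$ coincide here) for every $t\in[0,T]$.

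Finally, to see $u\in BV(0,T;X)$: for any partition $0\le t_0<t_1<\cdots<t_n\le T$, weak lower semicontinuity of $\|\cdot\|_X$ gives $\sum_{i=1}^n\|u(t_i)-u(t_{i-1})\|_X\le\liminf_j\sum_{i=1}^n\|u_j(t_i)-u_j(t_{i-1})\|_X\le V$, so $\mathrm{var}_0^T(u)\le V<\infty$. The one genuinely delicate point is the passage from the dense set $D$ to all of $[0,T]$: the scalar Helly extraction for the variation functions is what rules out oscillations of $u_j(t)$ at the jump times of the limiting variation (precisely why $N$ is forced into $D$), while at continuity times the uniform smallness of the increments $V_j(t)-V_j(t_k)$, valid for all large $j$ at once, is what transfers weak-Cauchyness from $t_k$ to $t$. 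Reflexivity of $X$ enters twice: to extract weak limits on $D$, and to identify the weak-Cauchy limit at continuity times with a genuine element of $X$; separability of $X^*$ is a convenient alternative that makes the relevant weak-$\ast$ topology metrizable.
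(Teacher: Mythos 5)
Your proof is correct and is precisely the standard Helly-type selection argument behind the result the paper simply cites (Lemma~7.2 of Dal Maso--DeSimone--Mora): scalar Helly for the variation functions $V_j$, diagonal extraction over a countable dense set containing the jump times of the limiting variation, transfer of weak convergence to the remaining times via the uniform smallness of the increments $V_j(t)-V_j(t_k)$, and weak lower semicontinuity of $\|\cdot\|_X$ for the $BV$ bound on the limit. The one point worth making explicit is that the hypothesis ``$\{u_j\}\subset BV(0,T;X)$ uniformly'' must be read as also bounding $\sup_j\|u_j(0)\|_X$ (the paper's definition of $BV(0,T;X)$ records only the variation), since this is what you use to obtain the uniform bound $M$ and hence the weak compactness at each fixed time.
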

\begin{proof}
    Check \cite[Lemma~7.2]{dalMaso2006Quasistatic}.
\end{proof}

And finally, we present an adaptation of \cite[Proposition~11]{bonafini2019Variational} that is going to be extensively used throughout this work.

\begin{lemma}\label{lemma:limit_bv_functions_and_right_limit}
    Let $X$ be a separable, reflexive Banach spaces with separable dual and $\varphi\in L^2(\Omega)\subset X$. Consider also a sequence of functions $\{v_j\}_{j\in\N}\subset L^2(0,T;L^2(\Omega))$ such that $\|v_j(t)\|_{L^2(\Omega)}\leq C$ for all $t\in[0,T]$ and $j\in\N$, and $v_j\overset{\ast}{\rightharpoonup} v$ in $L^\infty(0,T; L^2(\Omega))$. If the sequence of functions $F^j:[0,T]\to \R$ defined as
    \begin{equation}
        F^j(t)=\int_{\Omega}{v_j(t)\varphi}\,dx
    \end{equation}
    is uniformly bounded in $BV(0,T)$ with respect to $j$, then the function $F:[0,T]\to \R$ defined by
    \begin{equation}
        F(t)=\int_{\Omega}{v(t)\varphi}\,dx
    \end{equation}
    also belongs to $BV(0,T)$. Moreover, if $\{v_j\}$ is also uniformly bounded in $BV(0,T; X)$, then $v$ has right limit at $0$ and $v(0^+)\in L^2(\Omega)$, which has the property that
    \begin{equation}
        \lim_{t\searrow 0}F(t)=\int_{\Omega}{v(0^+)\varphi}\,dx.
    \end{equation}
\end{lemma}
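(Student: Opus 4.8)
The plan is to prove the two assertions in sequence, each by combining weak-$*$ compactness in $L^\infty(0,T;L^2(\Omega))$ with the $BV$ structure of the scalar functions $F^j$.

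\textbf{Step 1: $F\in BV(0,T)$.} I would first show that $F^j\to F$ pointwise on a dense subset of $[0,T]$, and in fact on all of $[0,T]$ after extracting a subsequence. The uniform bound $\|v_j(t)\|_{L^2(\Omega)}\le C$ lets me view $t\mapsto v_j(t)$ as a family of uniformly bounded elements of $L^2(\Omega)$; applying Lemma~\ref{lemma:generalized_helly_selection} with $X=L^2(\Omega)$ (which is separable, reflexive, with separable dual) to the sequence $t\mapsto v_j(t)$ — which is uniformly bounded in $BV(0,T;X)$ under the hypothesis of the second part, but for the first part I instead apply Helly directly to the real-valued sequence $F^j$, uniformly bounded in $BV(0,T)$ — yields a subsequence and a limit function $G\in BV(0,T)$ with $F^j(t)\to G(t)$ for every $t\in[0,T]$. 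It then remains to identify $G=F$. This I get from the weak-$*$ convergence $v_j\overset{\ast}{\rightharpoonup}v$ in $L^\infty(0,T;L^2(\Omega))$: testing against $\psi(t)\varphi(x)$ for arbitrary $\psi\in L^1(0,T)$ gives $\int_0^T F^j(t)\psi(t)\,dt\to\int_0^T F(t)\psi(t)\,dt$, while the pointwise convergence $F^j\to G$ together with the uniform bound $|F^j(t)|\le C\|\varphi\|_{L^2(\Omega)}$ and dominated convergence gives $\int_0^T F^j\psi\to\int_0^T G\psi$. Hence $F=G$ a.e., and since $G\in BV(0,T)$ and $F$ has a representative equal to $G$, we conclude $F\in BV(0,T)$ (and, identifying $F$ with its good representative, $F^j(t)\to F(t)$ for every $t$).

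\textbf{Step 2: right limit of $v$ at $0$.} Now assume in addition that $\{v_j\}$ is uniformly bounded in $BV(0,T;X)$. By Lemma~\ref{lemma:generalized_helly_selection} applied in $X$, there is a further subsequence and a function $\bar v\in BV(0,T;X)$ with $v_j(t)\overset{\ast}{\rightharpoonup}\bar v(t)$ in $X$ for every $t\in[0,T]$. The weak-$*$ limit in $L^\infty(0,T;L^2(\Omega))$ forces $\bar v=v$ a.e. in $t$, so $v$ admits a representative in $BV(0,T;X)$, which by the earlier lemma on $BV$ functions has a right limit $v(0^+)\in X$ with $v(t)\to v(0^+)$ in $X$ as $t\searrow 0$. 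To upgrade $v(0^+)$ to an element of $L^2(\Omega)$: the uniform bound $\|v_j(t)\|_{L^2(\Omega)}\le C$ passes to the limit, giving $\|v(t)\|_{L^2(\Omega)}\le C$ for a.e.\ $t$, hence along any sequence $t_k\searrow 0$ of such good times the vectors $v(t_k)$ are bounded in $L^2(\Omega)$ and admit a weakly convergent subsequence in $L^2(\Omega)$; its limit must coincide with $v(0^+)$ by uniqueness of limits in $X$ (since $L^2(\Omega)\hookrightarrow X$ and weak-$L^2$ convergence implies convergence in $X$'s weak-$*$ topology). Therefore $v(0^+)\in L^2(\Omega)$ with $\|v(0^+)\|_{L^2(\Omega)}\le C$.

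\textbf{Step 3: identifying $\lim_{t\searrow0}F(t)$.} Finally, for the good representative of $F$ and any $t\in(0,T]$ we have $F(t)=\int_\Omega v(t)\varphi\,dx=\langle v(t),\varphi\rangle$, and since $v(t)\to v(0^+)$ in $X$ and $\varphi\in X'$ (identifying $L^2(\Omega)\subset X$ with the appropriate pairing, or more directly since $v(t)\rightharpoonup v(0^+)$ weakly in $L^2(\Omega)$ by Step 2 and $\varphi\in L^2(\Omega)$), we get $F(t)\to\int_\Omega v(0^+)\varphi\,dx$ as $t\searrow 0$. The main obstacle I anticipate is the careful bookkeeping in Step 2 — ensuring the three limit objects ($G$ from Helly on $\mathbb{R}$, $\bar v$ from Helly on $X$, and $v$ from weak-$*$ $L^\infty(0,T;L^2(\Omega))$) are all identified with one another along a common subsequence, and that the $L^2(\Omega)$-valued right limit is genuinely the same element as the $X$-valued one; this is exactly the kind of subtle point that \cite[Proposition~11]{bonafini2019Variational} handles and that must be adapted here to the time-dependent, vector-valued setting.
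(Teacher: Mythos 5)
Your proposal is correct and follows essentially the same route as the paper: Helly's selection theorem applied to the scalar sequence $F^j$ plus identification of the limit via the weak-$*$ convergence in $L^\infty(0,T;L^2(\Omega))$ for the first part, and the generalized Helly lemma in $X$ together with the duality pairing against $\psi\in X^*\subset L^2(\Omega)$ to identify the $L^2$-weak limit with the $X$-valued right limit $v(0^+)$ for the second. Your bookkeeping in Step 2 (deducing uniqueness of the weak $L^2(\Omega)$ limit along $t_k\searrow 0$ from the strong convergence $v(t)\to v(0^+)$ in $X$) is, if anything, slightly more explicit than the paper's argument, but it is the same proof.
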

\begin{proof}
    Using the fact that $F_k$ are uniformly bounded in $BV(0,T)$ and hence, by Helly's selection theorem, there exists a function $\Bar{F}\in BV(0,T)$ such that $F_k(t)\to F(t)$ for every $t\in[0,T]$. Now we have to check that $\Bar{F}=F$. We start by taking $\varphi(t,x)=\eta(t)\psi(x)$ for some $\eta\in C^\infty_c(0,T)$. Since $v_k\overset{\ast}{\rightharpoonup} v$ in $L^\infty(0,T; L^2(\Omega))$, then by the dominated convergence theorem we have
    \begin{equation}
        \begin{aligned}
            \int_0^T{\int_\Omega{v(t)\psi}\,dx\,\eta(t)}\,dt
            &=\int_0^T{\int_\Omega{v(t)\varphi}\,dx}\,dt
            =\lim_{k\to\infty}{\int_0^T{\int_\Omega{v_k(t)\varphi}\,dx}\,dt}\\
            &=\lim_{k\to\infty}{\int_0^T{\int_\Omega{v_k(t)\psi}\,dx\,\eta(t)}\,dt}
            =\int_0^T{\lim_{k\to\infty}{\int_\Omega{v_k(t)\psi}\,dx}\,\eta(t)}\,dt
            =\int_0^T{\Bar{F}(t)\eta(t)}\,dt
        \end{aligned}
    \end{equation}
    Consequently, we have
    \begin{equation}
        \int_0^T{\left(\int_\Omega{v(t)\psi}\,dx-\Bar{F}(t)\right)\eta(t)}\,dt,
    \end{equation}
    which, due to the arbitrariness of $\eta$, allow us to say that $F=\Bar{F}$ for a.e. $t\in[0,T]$.

    Now let $\{v_j\}$ be also uniformly bounded in $BV(0,T; X)$. By Lemma \ref{lemma:generalized_helly_selection}, then we can infer that $v\in BV(0,T;X)$ and we can extract a subsequence such that $v_j(t)\overset{\ast}{\rightharpoonup} v(t)$ in $X$. Consequently, since $v\in BV(0,T; X)$, it makes sense to define $v(0^+)\in X$. At the same time, since $F(t)$ is in $BV(0,T)$, then it also makes sense $\lim_{t\searrow 0}{F(t)}=F(0^+)\in\R$. Moreover, using the fact that \begin{equation}
        \|v(t)\|_X\leq \liminf_{j\to\infty}{\|v_j(t)\|_{X}}\leq C\liminf_{j\to\infty}{\|v_j(t)\|_{L^2(\Omega)}}\leq C',\quad \forall t\in[0,T]
    \end{equation}
    we deduce that $v(t)\rightharpoonup w$ in $L^2(\Omega)$ as $t\searrow 0$. Notice that this limit is independent of the choice of the sequence $t_n\to 0$ because $F$ is $BV(0,T)$.
    
    We now want to check that $w=v(0^+)$ in $X$. For that consider an arbitrary but fixed function $\psi\in X^*\subset L^2(\Omega)$ and observe that
    \begin{equation*}
        \langle \psi, w\rangle_{X^*\times X}=\int_{\Omega}{w\psi}\,dx=\lim_{t\searrow 0}{\int_{\Omega}{v(t)\psi}\,dx}=\lim_{t\searrow 0}{\langle v(t),\psi\rangle_{X\times X^*}}=\langle\psi, v(0^+)\rangle_{X^*\times X}.
    \end{equation*}
\end{proof}

\subsection{Weak and very weak solutions}
Having fixed the functional framework,  we define the notion of weak solution to the viscous problem, $\nu>0$, similarly to the one used in \cite{bonetti2017On} to study the two obstacle problem in the special local isotropic case, i.e., $s=1$ and $A=B=Id$. 

\begin{definition}[Weak solution]\label{def:weak_solution}
    Let $\nu>0$, $s\in(0,1]$ and $w_0,w_1\in H^s_0(\Omega)$. We say that the pair $(u,\xi)$ is a weak solution of \eqref{eq:problem} if:
    \begin{itemize}
        \item[a)] $u\in H^1(0,T; H^s_0(\Omega))\cap W^{1,\infty}(0,T; L^2(\Omega))$ with $u(0)=w_0$, $\dot{u}(0)=w_1$ and $\xi\in\beta_s(u)\subset \mathcal{V}'_{s}$;
        \item[b)] the following identity holds
        \begin{equation}\label{eq:definition_weak_solution_for_T}
            \begin{aligned}
                \int_{\Omega}{\dot{u}(T) \varphi(T)}\,dx
                -\int_{\Omega}{w_1 \varphi(0)}\,dx
                &-\int_0^T{\int_\Omega{\dot{u} \dot{\varphi}}\,dx}\,d\tau
                +\langle \xi, \varphi\rangle_{\mathcal{V}'_{s}\times\mathcal{V}_{s}}\\
                &\quad +\int_0^T{\int_{\R^d}{\left(A D^s u\cdot D^s \varphi + \nu B D^s \dot{u} \cdot D^s \varphi\right)}\,dx}\,d\tau
                =\int_0^T{\int_{\Omega}{g\varphi}\,dx}\,d\tau,
            \end{aligned}
        \end{equation}
        for all $\varphi\in\mathcal{V}_{s}$.
        \item[c)] there exists for every $t\in(0,T]$ distribution $\xi_t\in\beta_{s,t}(u)\subset\mathcal{V}'_{s,t}$ corresponding to the restriction of the distribution $\xi$ in the sense of \eqref{eq:restriction_distribution}, such that
        \begin{equation}\label{eq:definition_weak_solution}
            \begin{aligned}
                \int_{\Omega}{\dot{u}(t) \varphi(t)}\,dx
                -\int_{\Omega}{w_1 \varphi(0)}\,dx
                &-\int_0^t{\int_\Omega{\dot{u} \dot{\varphi}}\,dx}\,d\tau
                +\langle \xi_t, \varphi\rangle_{\mathcal{V}'_{s,t}\times\mathcal{V}_{s,t}}\\
                &\quad +\int_0^t{\int_{\R^d}{\left(A D^s u\cdot D^s \varphi + \nu B D^s \dot{u} \cdot D^s \varphi\right)}\,dx}\,d\tau
                =\int_0^t{\int_{\Omega}{g\varphi}\,dx}\,d\tau,
            \end{aligned}
        \end{equation}
       for all $\varphi\in\mathcal{V}_{s,t}$.
    \end{itemize}
\end{definition} 
\begin{remark}
    In the classical case $s=1$, all the integrals in $\R^d$ are in fact integrals in $\Omega$ as $Du=0$ in $\R^d\setminus\Omega$.
\end{remark}
In the particular case of the lower or upper obstacle problem or in the two obstacles problem, i.e., when $\beta=\partial I_{[a,b]}$ with $\mathrm{Dom}(\beta)=[a,b]\ni 0$ being an halfline or a finite interval of $\R$, respectively, this definition yields a variational inequality.  By introducing the following non-empty convex set
\begin{equation}
    K^s=\{\psi\in H^1(0,T; H^s_0(\Omega)):\, v\in \mathrm{Dom}(\beta) \mbox{ a.e. in } Q_T\},
\end{equation}
we can immediately see that for a weak solution $u$ and a distribution $\xi\in\beta_s(u)$,
\begin{equation}
    \langle \xi, \psi-u\rangle_{\mathcal{V}'_s\times\mathcal{V}_s}\leq 0, \quad \mbox{ for all } \psi\in K^s.
\end{equation}
Therefore, we have $u\in K^s$ and, by testing \eqref{eq:definition_weak_solution} with $\varphi=\psi-u$, where $\psi$ is an arbitrary function in $K^s$, and using the previous inequality, we deduce that
\begin{equation}\label{eq:variational_inequality}
    \begin{aligned}
        &\int_\Omega{\dot{u}(T) (\psi(T)-u(T))}\,dx
        -\int_\Omega{w_1 (\psi(0)-w_0)}\,dx
        -\int_0^T{\int_\Omega{\dot{u} (\dot{\psi}-\dot{u})}\,dx}\,dt\tau\\
        &\qquad+\int_0^T{\int_{\R^d}{\big(A D^s u\cdot D^s(\psi-u) + \nu B D^s \dot{u} \cdot D^s(\psi-u)\big)}\,dx}\,d\tau
        \geq\int_0^T{\int_{\Omega}{g(\psi-u)}\,dx}\,d\tau, \quad \forall \psi\in K^s
    \end{aligned}
\end{equation}
\begin{remark}
    Notice that, by density, the previous inequality also holds for all 
    \begin{equation}
        \psi\in\Tilde{K}^s=\{\psi\in \mathcal{V}_s:\, v\in \mathrm{Dom}(\beta) \mbox{ a.e. in } Q_T\}
    \end{equation}
    instead of $K^s$.
\end{remark}
This variational inequality approach \eqref{eq:variational_inequality} was explored in \cite{jarusek1992Variational} to study the existence of a solution to the viscous hyperbolic lower obstacle problem $u\geq0$,  within the local $s=1$ isotropic setting and with Neumann boundary condition (although the authors make also a remark about the existence of solution with Dirichlet boundary condition).

It is important to emphasize that when the viscous term vanishes, $\nu=0$, we don't know whether or not there is a weak solution to the obstacle problem. In fact, determining whether the variational inequality \eqref{eq:variational_inequality} or the weak formulation of Definition \ref{def:weak_solution} associated with the wave equation have solutions is still an open problem. The issue is that, without the \textit{a priori} estimate that one obtains from the viscosity term, one cannot get the strong convergence of the velocity $\dot{u}$ in $L^2(0,T; L^2(\Omega))$ as $\nu\to 0$ needed to deal with the space-time integral that arises from the integration by parts of the acceleration. 

Nevertheless, when dealing with the lower obstacle problem $u\geq 0$, there is a weaker notion of solution to the obstacle problem without the viscosity term recently introduced in \cite{bonafini2019Variational}, which is valid also for obstacles depending on the space variable. This weaker notion of solution can also be extended to the viscoelastic obstacle problem, which will be called here a very weak solution.

\begin{definition}[Very weak solution for the viscous problem]\label{def:vey_weak_solution}
    Let $\nu> 0$, $s\in(0,1]$ and $w_0\in H^s_0(\Omega)$, $w_1\in L^2(\Omega)$. We say that $u=u(t,x)\geq 0$ is a very weak solution of \eqref{eq:problem} in $(0,T)$ if $u\in H^1(0,T; H^s_0(\Omega))$, $\dot{u}\in BV(0,T; X)$, $\dot{u}(0^+)\in L^2(\Omega)$, $u(t,x)\geq 0$ for a.e. $(t,x)\in Q_T$ and it satisfies the inequality
    \begin{equation}
        -\int_0^T{\int_\Omega{\dot{u}\dot\varphi}\,dx}\,dt+\int_0^T{\int_{\R^d}{AD^su\cdot D^s \varphi}\,dx}\,dt+\nu\int_0^T{\int_{\R^d}{ BD^s\dot{u}\cdot D^s \varphi}\,dx}\,dt\geq \int_\Omega{w_1\varphi(0)}\,dx+\int_0^T{\int_{\Omega}{g\varphi}\,dx}\,d\tau
    \end{equation}
    for all $\varphi\in\left\{\varphi\in\mathcal{V}_s: \varphi\geq 0\mbox{ and } \mathrm{supp}(\varphi)\subset[0,T)\right\}$, as well as the initial conditions
    \begin{equation}\label{eq:initial_conditions_very_weak_solutions}
        u(0,\cdot)=u_0 , \qquad \int_\Omega{(\dot{u}(0^+)-w_1)(\psi-w_0)}\,dx\geq 0\quad \forall\psi\in H^s_0(\Omega), \psi\geq 0.
    \end{equation}
\end{definition}

This definition is based on the variational inequality \eqref{eq:variational_inequality} but replaces $\psi-u$ by $\varphi$, replacing the problematic term $\dot{u}(\dot{\psi}-\dot{u})$ by $\dot{u}\dot{\varphi}$. As for the inequality \eqref{eq:initial_conditions_very_weak_solutions}, it represents what happens immediately after the membrane starts moving. In fact, it says that if the membrane if far from the obstacle, then $\dot{u}(0^+)=w_1$, but if the initial position of the membrane is already in contact with the obstacle, the velocity $\dot{u}(0^+)$ might differ from the initial velocity. The fact that this information comes in the form of an inequality is due to the fact that we do not specify if the collision between the membrane and the obstacle is elastic, perfectly inelastic or something in between.

As the hyperbolic elastic obstacle problem requires less regularity of the solution, we introduce the notion of very weak solution similarly to \cite{bonafini2019Variational}.

\begin{definition}[Very weak solution for the inviscid problem]\label{def:vey_weak_solution_inviscid}
    Let $\nu=0$, $s\in(0,1]$ and $w_0\in H^s_0(\Omega)$, $w_1\in L^2(\Omega)$. We say that $u=u(t,x)\geq 0$ is a very weak solution of \eqref{eq:problem} in $(0,T)$ if $u\in H^1(0,T; L^2(\Omega))\cap L^2(0,T; H^s_0(\Omega))$, $\dot{u}\in BV(0,T; X)$, $\dot{u}(0^+)\in L^2(\Omega)$, $u(t,x)\geq 0$ for a.e. $(t,x)\in Q_T$ and it satisfies the inequality
    \begin{equation}
        -\int_0^T{\int_\Omega{\dot{u}\dot\varphi}\,dx}\,d\tau
        +\int_0^T{\int_{\R^d}{AD^su\cdot D^s \varphi}\,dx}\,d\tau\geq \int_\Omega{w_1\varphi(0)}\,dx+\int_0^T{\int_{\Omega}{g\varphi}\,dx}\,d\tau
    \end{equation}
    for all $\varphi\in\left\{\varphi\in\mathcal{V}_s: \varphi\geq 0\mbox{ and } \mathrm{supp}(\varphi)\subset[0,T)\right\}$, as well as the initial conditions
    \begin{equation}\label{eq:initial_conditions_very_weak_solutions_inviscid_problem}
        u(0,\cdot)=u_0 , \qquad \int_\Omega{(\dot{u}(0^+)-w_1)(\psi-w_0)}\,dx\geq 0\quad \forall\psi\in H^s_0(\Omega), \psi\geq 0.
    \end{equation}
\end{definition}

\section{The penalized viscous problem}\label{sec:approximating_problem}

In order to prove the existence of solutions, as it is standard with maximal monotone operators, we consider approximations to the problem \eqref{eq:problem} by a family of regularized problems, similarly to \cite{jarusek1992Variational, bonetti2017On}.

Let $j^\varepsilon$ be the Moreau-Yosida regularization of $j$, see \cite{barbu2010Nonlinear}, and define the function
\begin{equation}
    \beta^\varepsilon:=\partial j^\varepsilon=(j^\varepsilon)',
\end{equation}
which is a monotone and globally Lipschitz continuous function on the real line, with Lipschitz constant depending on $\varepsilon$.

We consider, for each $\varepsilon\in (0,1)$, the regularized problems
\begin{equation}\label{eq:regularized_problem_equation_form}
    \begin{cases}
        \ddot{u}^\varepsilon-D^s\cdot(A D^s u^\varepsilon)-D^s\cdot (\nu B D^s \dot{u}^\varepsilon) + \beta^\varepsilon(u^\varepsilon)= g & \mbox{ in } Q_t,\\
        u^\varepsilon=0 & \mbox{ on } (\R^d\setminus\Omega)\times(0,T),\\
        u^\varepsilon(0,\cdot)=w_0,\quad \dot{u}^\varepsilon(0,\cdot)=w_1 & \mbox{ on } \Omega,
    \end{cases}
\end{equation}
with solutions $u^\varepsilon$ satisfying the weak formulation
\begin{multline}\label{eq:regularized_problem}
    \int_\Omega{\dot{u}^\varepsilon(t)\varphi(t)}\,dx-\int_\Omega{w_1 \varphi(0)}\,dx-\int_0^t{\int_\Omega{\dot{u}^\varepsilon \   \dot{\varphi}}\,dx}\,d\tau+\int_0^t{\int_{\Omega}{\beta^\varepsilon(u^\varepsilon)\varphi}\,dx}\,d\tau\\
    +\int_0^t{\int_{\R^d}{\left(A D^s u^\varepsilon\cdot D^s \varphi + \nu B D^s \dot{u}^\varepsilon \cdot D^s \varphi\right)}\,dx}\,d\tau=\int_0^t{\int_{\Omega}{g\varphi}\,dx}\,d\tau,
\end{multline}
for every $\varphi\in \mathcal{V}_{s,t}$.

\begin{theorem}\label{thm:existence_approximated_solutions}
    Let $T > 0$, $w_0 \in H^s_0(\Omega), w_1 \in L^2(\Omega)$, and $g \in L^2(\Omega)$ be given. Then, for all $\varepsilon\in(0,1)$ there exists a function $u^\varepsilon$ with
    \begin{equation}
        u^\varepsilon\in L^\infty(0,T; H^s_0(\Omega)),
        \quad 
        \dot{u}^\varepsilon \in L^\infty(0,T; L^2(\Omega))\cap L^2(0,T; H^s_0(\Omega)),
        \quad 
        \ddot{u}^\varepsilon \in L^2(0,T; L^2(\Omega))
    \end{equation}
    and
    \begin{equation}
        u^\varepsilon(0,\cdot)=w_0
        \quad \mbox{ and } \quad 
        \dot{u}^\varepsilon(0,\cdot)=w_1.
    \end{equation} that solves \eqref{eq:regularized_problem} for every $t\in(0,T]$. Moreover, it also satisfies the energy identity
    \begin{equation}\label{eq:energy_equality_varepsilon}
        \mathcal{E}^\varepsilon(u^\varepsilon(t),\dot{u}^\varepsilon(t))+\int_0^t{\int_{\R^d}{\nu BD\dot{u}^\varepsilon\cdot\dot{u}^\varepsilon}\,dx}\,d\tau= \mathcal{E}^\varepsilon(w_0,w_1)+\int_0^t{\int_\Omega{g\dot{u}(\tau)}\,dx}\,d\tau,
    \end{equation}
    where
    \begin{equation}\label{eq:energy_functional}
        \mathcal{E}^\varepsilon(v,w)=\frac{1}{2}\left(\|w\|^2_{L^2(\Omega)}+\int_{\R^d}{A D^s v\cdot D^s v}\,dx\right)+\int_\Omega{j^\varepsilon(v)}\,dx.
    \end{equation}
\end{theorem}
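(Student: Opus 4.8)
The regularized problem \eqref{eq:regularized_problem_equation_form} is a semilinear wave equation with viscous damping and the globally Lipschitz nonlinearity $\beta^\varepsilon$ (with $\beta^\varepsilon(0)=0$, $(\beta^\varepsilon)'\ge 0$), so the plan is the classical Faedo--Galerkin scheme. Fix a basis $\{e_k\}_{k\in\N}$ of $H^s_0(\Omega)$ whose finite spans $V_m$ are dense, let $P_m$ be the $L^2(\Omega)$-orthogonal projection onto $V_m$, and seek $u_m(t)=\sum_{k=1}^m c^m_k(t)e_k$ solving the system obtained by testing \eqref{eq:regularized_problem_equation_form} with $e_1,\dots,e_m$, with $u_m(0)\in V_m$ and $\dot u_m(0)\in V_m$ chosen so that $u_m(0)\to w_0$ in $H^s_0(\Omega)$ and $\dot u_m(0)\to w_1$ in $L^2(\Omega)$. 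Since the coupling $\int_\Omega\beta^\varepsilon(u_m)e_k$ is Lipschitz in the coefficients $c^m$ and all remaining terms are linear with $L^2$-in-time datum $g$, the associated first-order ODE system has a right-hand side that is globally Lipschitz in the state and integrable in time; by the Carath\'eodory--Picard--Lindel\"of theorem it has a unique solution $u_m\in C^1([0,T];V_m)$ with $\ddot u_m\in L^2(0,T;V_m)$ on the whole interval. The weak solution of \eqref{eq:regularized_problem} and the energy identity \eqref{eq:energy_equality_varepsilon} are then recovered by passing to the limit $m\to\infty$, the identity being obtained as the limit of the Galerkin energy identity.

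Two a priori estimates, both uniform in $m$, are needed. \emph{First estimate:} testing the Galerkin system with $\dot u_m$ and using $\frac{d}{dt}\int_\Omega j^\varepsilon(u_m)=\int_\Omega\beta^\varepsilon(u_m)\dot u_m$ together with the symmetry of $B$ yields, a.e. in $(0,T)$,
\begin{equation*}
    \frac{d}{dt}\,\mathcal E^\varepsilon(u_m,\dot u_m)+\nu\int_{\R^d}BD^s\dot u_m\cdot D^s\dot u_m\,dx=\int_\Omega g\,\dot u_m\,dx .
\end{equation*}
Integrating in time, using $\mathcal E^\varepsilon(v,w)\ge\tfrac12\|w\|_{L^2(\Omega)}^2+\tfrac{a_*}2\|D^sv\|_{L^2(\R^d;\R^d)}^2\ge 0$, the convergence $\mathcal E^\varepsilon(u_m(0),\dot u_m(0))\to\mathcal E^\varepsilon(w_0,w_1)$ (here $v\mapsto\int_\Omega j^\varepsilon(v)$ is strongly continuous on $L^2(\Omega)$ because $j^\varepsilon$ is Lipschitz), the strict ellipticity of $B$, Lemma \ref{lemma:poincares_inequality} and Gronwall's inequality, one obtains uniform bounds for $u_m$ in $L^\infty(0,T;H^s_0(\Omega))$, for $\dot u_m$ in $L^\infty(0,T;L^2(\Omega))\cap L^2(0,T;H^s_0(\Omega))$, and hence, by the Lipschitz continuity of $\beta^\varepsilon$, for $\beta^\varepsilon(u_m)$ in $L^\infty(0,T;L^2(\Omega))$. \emph{Second estimate:} testing with $\ddot u_m$ and integrating over $(0,t)$, using the symmetry of $B$ for the viscous term ($\nu\int BD^s\dot u_m\cdot D^s\ddot u_m=\tfrac\nu2\tfrac{d}{dt}\int BD^s\dot u_m\cdot D^s\dot u_m$) and integrating the elastic term by parts in time ($\int_0^t\!\int AD^su_m\cdot D^s\ddot u_m=[\int AD^su_m\cdot D^s\dot u_m]_0^t-\int_0^t\!\int AD^s\dot u_m\cdot D^s\dot u_m$), one reaches an identity for $\int_0^t\|\ddot u_m\|_{L^2(\Omega)}^2+\tfrac\nu2\int_{\R^d}BD^s\dot u_m(t)\cdot D^s\dot u_m(t)$ in which every term is controlled: the boundary term at $t$ is absorbed by $\tfrac{\nu b_*}2\|D^s\dot u_m(t)\|^2$ via Young's inequality, the boundary term at $0$ and the initial viscous term are bounded using $w_1\in H^s_0(\Omega)$ (the compatibility already present in Definition \ref{def:weak_solution}), and the remaining terms are controlled by the first estimate and the data. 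This yields a uniform bound for $\ddot u_m$ in $L^2(0,T;L^2(\Omega))$.

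Passing to the limit is then routine. Up to a subsequence, $u_m\overset{\ast}{\rightharpoonup}u$ in $L^\infty(0,T;H^s_0(\Omega))$, $\dot u_m\overset{\ast}{\rightharpoonup}\dot u$ in $L^\infty(0,T;L^2(\Omega))\cap L^2(0,T;H^s_0(\Omega))$ and $\ddot u_m\rightharpoonup\ddot u$ in $L^2(0,T;L^2(\Omega))$; the Aubin--Lions--Simon lemma, based on the compact embedding $H^s_0(\Omega)\hookrightarrow\hookrightarrow L^2(\Omega)$, gives $u_m\to u$ and $\dot u_m\to\dot u$ strongly in $L^2(0,T;L^2(\Omega))$, while the uniform H\"older-$\tfrac12$ continuity of $t\mapsto u_m(t)$ in $H^s_0(\Omega)$ and of $t\mapsto\dot u_m(t)$ in $L^2(\Omega)$ (from the first and second estimates) gives $u_m(t)\rightharpoonup u(t)$ in $H^s_0(\Omega)$ and $\dot u_m(t)\rightharpoonup\dot u(t)$ in $L^2(\Omega)$ for every $t$. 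Hence $\beta^\varepsilon(u_m)\to\beta^\varepsilon(u)$ in $L^2(0,T;L^2(\Omega))$ by continuity of $\beta^\varepsilon$ and dominated convergence. Passing to the limit in the Galerkin identity tested against $\varphi\in V_{m_0}$ with time coefficients in $H^1(0,t)$, then using the density of $\bigcup_m V_m$ in $H^s_0(\Omega)$, yields \eqref{eq:regularized_problem} for every $\varphi\in\mathcal V_{s,t}$ and $t\in(0,T]$; the convergences $u_m(0)\to w_0$, $\dot u_m(0)\to w_1$ in $L^2(\Omega)$ give the initial conditions. Finally, passing to the limit in the time-integrated Galerkin energy identity gives ``$\le$'' in \eqref{eq:energy_equality_varepsilon}, by weak lower semicontinuity of $\mathcal E^\varepsilon$ and of the nonnegative quadratic functional $v\mapsto\int_{\R^d}BD^sv\cdot D^sv\,dx$; the reverse inequality follows by testing \eqref{eq:regularized_problem} with $\varphi=\dot u$ — admissible since $\ddot u\in L^2(0,T;L^2(\Omega))$ makes $\dot u\in\mathcal V_{s,t}$ — and using $\int_0^t\!\int_\Omega\dot u\,\ddot u\,dx\,d\tau=\tfrac12\|\dot u(t)\|_{L^2(\Omega)}^2-\tfrac12\|w_1\|_{L^2(\Omega)}^2$, $\int_0^t\!\int_\Omega\beta^\varepsilon(u)\dot u\,dx\,d\tau=\int_\Omega j^\varepsilon(u(t))-\int_\Omega j^\varepsilon(w_0)$ and, using the symmetry of $A$, $\int_0^t\!\int_{\R^d}AD^su\cdot D^s\dot u=\tfrac12\int_{\R^d}AD^su(t)\cdot D^su(t)-\tfrac12\int_{\R^d}AD^sw_0\cdot D^sw_0$.

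\textbf{Main obstacle.} The delicate point is the second a priori estimate: obtaining $\ddot u^\varepsilon\in L^2(0,T;L^2(\Omega))$ — which is exactly what renders the limit equation meaningful and upgrades the energy inequality to the identity \eqref{eq:energy_equality_varepsilon} — relies on the coercivity of the viscous operator to absorb the elastic boundary term produced by the integration by parts in time, hence on $\nu>0$, and correspondingly on the compatibility $w_1\in H^s_0(\Omega)$. All the remaining steps are a direct transcription of the classical $s=1$ argument, the fractional gradient $D^s$ entering only through the boundedness and $H^s_0(\Omega)$-coercivity of $\mathscr A^s$ and $\mathscr B^s$ guaranteed by the fractional Poincar\'e inequality of Lemma \ref{lemma:poincares_inequality}.
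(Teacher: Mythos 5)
Your proof is correct, but it takes a genuinely different route from the paper: you discretize in space (Faedo--Galerkin), whereas the paper discretizes in time (Rothe's method, i.e.\ an implicit finite-difference scheme \eqref{eq:time_discretized_equation_rothe_method} solved step by step by elliptic variational theory for monotone operators). The analytical core is the same in both cases — the two a priori estimates obtained by testing with the (discrete) velocity and the (discrete) acceleration, followed by Aubin--Lions and pointwise-in-time weak convergences — so the two schemes are essentially interchangeable here. The main genuine difference is in the identification of the nonlinear term: you exploit the strong $L^2(Q_T)$ convergence of $u_m$ together with the ($\varepsilon$-dependent) global Lipschitz continuity of $\beta^\varepsilon$, which is more elementary and perfectly valid at fixed $\varepsilon$; the paper instead runs a Minty-type $\limsup$/monotonicity argument, which is overkill at this stage but is exactly the argument it recycles later for the passage $\varepsilon\to 0$, where the Lipschitz constant degenerates. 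One practical cost of your choice is that Proposition~2 (the uniform $BV$ bound on $F^\varepsilon$) is proved in the paper directly from the Rothe interpolants $V^\varepsilon_n$, so with your scheme that proof would have to be rewritten with Galerkin approximants (it goes through, since the bound comes from the equation itself). Two points you handle more honestly than the paper: you flag that the acceleration estimate requires $w_1\in H^s_0(\Omega)$ (consistent with Definition~\ref{def:weak_solution} but not with the theorem's stated hypothesis $w_1\in L^2(\Omega)$ — the paper's Rothe scheme has the same hidden requirement through the term $D^s v^\varepsilon_{n,0}=D^s w_1$), and you note that the chain rule for the elastic energy uses the symmetry of $A$, which the paper does not assume (its energy identity should really be read with $A_{\mathrm{sym}}$ in the elastic term).
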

\begin{proof}
    To prove the existence of functions $u^\varepsilon$ satisfying equation \eqref{eq:regularized_problem} we can apply the Rothe method, which consists of a discretization in time of the equation. For that, let us consider $n\in\N$, $h=T/n$ and $t_k=kh$ for $k=-1,0,...,n$. Then, the idea of this method is to approximate the hyperbolic equation \eqref{eq:regularized_problem_equation_form} by an elliptic equation written in the sense of distributions
    \begin{multline}\label{eq:time_discretized_equation_rothe_method}
        \int_\Omega{\frac{u^\varepsilon_{n, j}-2u^\varepsilon_{n, j-1}+u^\varepsilon_{n,j-2}}{h^2}\varphi}\,dx+\int_{\R^d}{AD^s u^\varepsilon_{n, j} D^s \varphi}\,dx\\
        +\nu\int_{\R^d}{\frac{BD^s u^\varepsilon_{n, j}-BD^s u^\varepsilon_{n, j-1}}{h} D^s \varphi}\,dx+\int_{\Omega}{\beta^\varepsilon(u^\varepsilon_{n,j}) \varphi}\,dx=\int_\Omega{g\varphi}\,dx
    \end{multline}
    for all $\varphi\in H^s_0(\Omega)$, with
    \begin{equation}\label{eq:rothe_constraints}
        u^\varepsilon_{n,0}=w_0,\quad u^\varepsilon_{n, -1}=w_0-hw_1.
    \end{equation}
    Notice that this problem is defined recursively in the sense that for each $j$, the unknown function in the equation \eqref{eq:time_discretized_equation_rothe_method} is the function $u_j$, while $u_{j-1}$ and $u_{j-2}$ are already known from the previous steps in the recursion or by the constraints \eqref{eq:rothe_constraints}.
    
    Through classical variational techniques for monotone operators, one can show that for each $n\in\N$ and each $j=1...,n$, there exists a unique function $u^\varepsilon_{n,j}$ that solves \eqref{eq:time_discretized_equation_rothe_method}.

    Since the problem that we are studying involves the first and the second derivative in time, it is useful for us to also consider discretized velocity and acceleration functions
    \begin{equation}
        v^\varepsilon_{n,j}=\frac{u^\varepsilon_{n,j}-u^\varepsilon_{n,j-1}}{h} \quad \mbox{ and }\quad a^\varepsilon_{n,j}=\frac{v^\varepsilon_{n,j}-v^\varepsilon_{n,j-1}}{h}.
    \end{equation}
    Moreover, in order to apply Rothe's method it also useful to consider the following piecewise constant functions
    \begin{equation}
        u^\varepsilon_n=\begin{cases}
            w_0, & t\in(-h, 0]\\
            u^\varepsilon_{n,j}, & t\in (t^n_{j-1}, t^n_j]
        \end{cases},
        \quad 
        v^\varepsilon_n=\begin{cases}
            v^\varepsilon_{n,0}, & t\in(-h, 0]\\
            v^\varepsilon_{n,j}, & t\in (t^n_{j-1}, t^n_j]
        \end{cases}
        \quad \mbox{ and } \quad
        a^\varepsilon_{n,j}=\begin{cases}
            a^\varepsilon_{n,0}, & t\in(-h, 0]\\
            a^\varepsilon_{n,j}, & t\in (t^n_{j-1}, t^n_j]
        \end{cases},
    \end{equation}
    as well as the following piecewise affine functions in $t\in [t^n_{j-1}, t^n_j]$
    \begin{equation}\label{eq:piecewise_affine_dscretization}
        U^\varepsilon_n(t)=u^\varepsilon_{n,j-1}+\frac{1}{h}(t-t^n_{j-1})(u^\varepsilon_{n,j}-u^\varepsilon_{n,j-1})\quad \mbox{ and }\quad V^\varepsilon_n(t)=v^\varepsilon_{n,j-1}+\frac{1}{h}(t-t^n_{j-1})(v^\varepsilon_{n,j}-v^\varepsilon_{n,j-1}).
    \end{equation}

    Having defined these functions, we first observe that if we test \eqref{eq:time_discretized_equation_rothe_method} with $\varphi=v^\varepsilon_{n,j}$, we obtain that for $n$ sufficiently small, there exists a positive constant $C_{\varepsilon,\sigma}>0$ independent of $n$, $j$ and $s$, but dependent of $\varepsilon$ and $\sigma$ such that
    \begin{equation}\label{eq:estimate_1}
        \|v^\varepsilon_{n,j}\|^2_{L^2(\Omega)}+\|D^s v^\varepsilon_{n,j}\|^2_{L^2(\R^d;\R^d)}+\|D^s u^\varepsilon_{n,j}\|_{L^2(\R^d;\R^d)}\leq C_{\varepsilon,\sigma}.
    \end{equation}
    At the same time, if instead we test with $w=a^\varepsilon_{n,j}$, we obtain that for $n$ sufficiently small, there exists a positive constant $C_{\varepsilon,\sigma}>0$ independent of $n$, $j$ and $s$, but dependent of $\varepsilon$ and $\sigma$ such that
    \begin{equation}\label{eq:estimate_2}
        \|a^n_j\|^2_{L^2(\Omega)}+\|D^s v ^n_j\|^2_{L^2(\R^d;\R^d)}\leq C_{\varepsilon,\sigma}.
    \end{equation}
    Moreover, by the properties of Hille-Yosida's theory, we know that
    \begin{equation}\label{eq:estimate_3}
        \|\beta^\varepsilon(u^\varepsilon_{n,j})\|_{L^2(\Omega)}\leq \frac{1}{\varepsilon}\|u^\varepsilon_{n,j}\|_{L^2(\Omega)}\leq \frac{C_{\varepsilon,\sigma}}{\varepsilon}
    \end{equation}

    From the way we have defined $u^\varepsilon_n, v^\varepsilon_n$ and $a^\varepsilon_n$, the estimates \eqref{eq:estimate_1}, \eqref{eq:estimate_2} and \eqref{eq:estimate_3} also hold for these functions (when replacing $u^\varepsilon_{n,j}, v^\varepsilon_{n,j}$ and $a^\varepsilon_{n,j}$ appropriately). As a consequence of these estimates
    \begin{align*}
        &u^\varepsilon_n\rightharpoonup u^\varepsilon \mbox{ in } L^2(0,T; H^s_0(\Omega)),\\
        &\beta^\varepsilon(u^\varepsilon_n)\rightharpoonup \xi^\varepsilon \mbox{ in } L^2(0,T; L^2(\Omega)),\\
        &v^\varepsilon_n\rightharpoonup v^\varepsilon \mbox{ in } L^2(0,T; H^s_0(\Omega)),\quad \mbox{ and }\\
        &a^\varepsilon_n\rightharpoonup a^\varepsilon \mbox{ in } L^2(0,T; L^2(\Omega)).
    \end{align*}
    At the same time, since $\dot{U}^\varepsilon_n(t)=v^\varepsilon_n$ and $\dot{V}^\varepsilon_n(t)=a^\varepsilon_n$ for each $t\in(t^n_{j-1}, t^n_n)$, and
    \begin{equation*}
        \|U^\varepsilon_n(t)-u^\varepsilon_n(t)\|_{L^2(\Omega)}+\|V^\varepsilon_n(t)-v^\varepsilon_n(t)\|_{L^2(\Omega)}\leq C_{\varepsilon,\sigma}h,
    \end{equation*}
    we deduce that $v^\varepsilon=\dot{u}^\varepsilon$ and $a^\varepsilon=\ddot{u}^\varepsilon$.
    Moreover, due to Aubin-Lions lemma and Poincaré's inequality, we can extract further a subsequence such that
    \begin{align*}
        &V^\varepsilon_n(t)\to \dot{u}^\varepsilon(t) \mbox{ in } L^2(\Omega) \mbox{ for all } t\in[0,T], \\
        &u^\varepsilon_n\to u^\varepsilon \mbox{ in } L^2(0,T; L^2(\Omega)), \qquad \mbox{ and }\\
        &v^\varepsilon_n\to \dot{u}^\varepsilon \mbox{ in } L^2(0,T; L^2(\Omega)).
    \end{align*}
    From the definition of the piecewise constant functions and from \eqref{eq:time_discretized_equation_rothe_method}, we are able to write
    \begin{equation}\label{eq:discretized_variational_equation}
        \int_0^t{\int_{\Omega}{a^\varepsilon_n \varphi + \beta^\varepsilon(u^\varepsilon_n)\varphi-g\varphi}\,dx}\,d\tau+\int_0^t\int_{\R^d}{\left(A D^s u^\varepsilon_n\cdot D^s \varphi + \nu B D^s v^\varepsilon_n \cdot D^s\varphi\right)}\,dx\,d\tau=0,
    \end{equation}
    for every $t\in[0,T]$ and every $\varphi\in \mathcal{V}_s$. With this identity we can apply integration by parts,
    \begin{multline*}
        \int_{\Omega}{v^\varepsilon_n(t)\varphi(t)}\,dx-\int_\Omega{w_1 \varphi(0)}\,dx-\int_0^t{\int_\Omega{v^\varepsilon_n \dot{\varphi}}\,dx}\,d\tau+\int_0^t{\int_{\Omega}{\beta^\varepsilon(u^\varepsilon_n)\varphi}\,dx}\,d\tau\\
        +\int_0^t{\int_{\R^d}{\left(A D^s u^\varepsilon_n\cdot D^s \varphi + \nu B D^s v^\varepsilon_n \cdot D^s \varphi\right)}\,dx}\,d\tau=\int_0^t{\int_{\Omega}{g\varphi}\,dx}\,d\tau,
    \end{multline*}
    and then pass to the limit to get
    \begin{equation}\label{eq:regularized_problem_without_identification}
        \begin{aligned}
            &\int_{\R^d}{\dot{u}^\varepsilon(t) \varphi(t)}\,dx-\int_{\Omega}{w_1 \varphi(0)}\,dx-\int_0^t{\int_\Omega{\dot{u}^\varepsilon \dot{\varphi}}\,dx}\,d\tau+\int_0^t{\int_{\Omega}{\xi^\varepsilon \varphi}\,dx}\,d\tau\\
            &\qquad\qquad\qquad\qquad\qquad\qquad+\int_0^t{\int_{\R^d}{\left(A D^s u^\varepsilon\cdot D^s \varphi + \nu B D^s \dot{u}^\varepsilon \cdot D^s \varphi\right)}\,dx}\,d\tau=\int_0^t{\int_{\Omega}{g\varphi}\,dx}\,d\tau.
        \end{aligned}
    \end{equation}
    
    Now we need to check that  $\xi^\varepsilon=\beta^\varepsilon(u^\varepsilon)$ in $L^2(0,T;L^2(\Omega))$. In fact, applying the appropriate limits to \eqref{eq:discretized_variational_equation} and using the fact that $B$ is symmetric, we get that
    \begin{align*}
        &\limsup_{h\to 0}\int_0^t{\int_{\Omega}{\beta^\varepsilon(u^\varepsilon_n)u^\varepsilon_n}\,dx}\,d\tau\\
        &\qquad\qquad\leq \int_0^t{\int_\Omega{gu^\varepsilon-\ddot{u}^\varepsilon u^\varepsilon}\,dx}\,d\tau-\liminf_{h\to 0}{\int_0^t\int_{\R^d}{\left(A D^s u^\varepsilon_n\cdot D^s u^\varepsilon_n + \nu B D^s v^\varepsilon_n \cdot D^s u^\varepsilon_n\right)}\,dx\,d\tau}\\
        &\qquad\qquad = \int_0^t{\int_\Omega{gu^\varepsilon-\ddot{u}^\varepsilon u^\varepsilon}\,dx}\,d\tau-\liminf_{h\to 0}{\int_0^t\int_{\R^d}{\left(A_\mathrm{sym} D^s u^\varepsilon_n\cdot D^s u^\varepsilon_n + \nu B D^s v^\varepsilon_n \cdot D^s u^\varepsilon_n\right)}\,dx\,d\tau}\\
        &\qquad\qquad \leq \int_0^t{\int_\Omega{gu^\varepsilon-\ddot{u}^\varepsilon u^\varepsilon}\,dx}\,d\tau-\liminf_{h\to 0}{\int_0^t\int_{\R^d}{\left(A_\mathrm{sym} D^s u^\varepsilon\cdot D^s u^\varepsilon + \nu B D^s \dot{u}^\varepsilon \cdot D^s u^\varepsilon\right)}\,dx\,d\tau}\\
        &\qquad\qquad \leq \int_0^t{\int_\Omega{gu^\varepsilon-\ddot{u}^\varepsilon u^\varepsilon}\,dx}\,d\tau-\liminf_{h\to 0}{\int_0^t\int_{\R^d}{\left(A D^s u^\varepsilon\cdot D^s u^\varepsilon + \nu B D^s \dot{u}^\varepsilon \cdot D^s u^\varepsilon\right)}\,dx\,d\tau}\\
        &\qquad\qquad =\int_0^t{\int_{\Omega}{\xi^\varepsilon u^\varepsilon_n}\,dx}\,d\tau,
    \end{align*}
    which means that $\xi^\varepsilon= \partial\mathcal{J}^\varepsilon(u^\varepsilon)=\beta^\varepsilon(u^\varepsilon)$, since $j^\varepsilon$ is convex and differentiable.

    Finally, if we test \eqref{eq:regularized_problem_without_identification} with $\varphi=\dot{u}^\varepsilon$ and use the identification $\xi^\varepsilon=\beta^\varepsilon(u^\varepsilon)$, we obtain \eqref{eq:energy_equality_varepsilon}.
\end{proof}

\begin{remark}
    Notice that, since $\xi^\varepsilon=\beta^\varepsilon(u^\varepsilon)$ is in $L^2(0,T; L^2(\Omega))$ we can recover \eqref{eq:regularized_problem_equation_form} in the sense of distributions by testing \eqref{eq:regularized_problem} with functions $w\in C^\infty_c(Q_T)$ and then applying integration by parts. We can also take a time dependent force $g=g(x,t)$ in $L^2(0,T; L^2(\Omega))$ with a simple adaptation of Rothe's method.
\end{remark}

\begin{proposition}
    Let $s\in(\sigma,1]$ with $\sigma>0$,  $\beta=\partial I_{[0,+\infty]}$, which corresponds to the problem \eqref{eq:problem} with zero obstacle ($u\geq 0$),  $\varphi\in H^s_0(\Omega)$, $\varphi\geq0$ and $u^\varepsilon$ be the solution for the penalized viscous problem obtained in Theorem \ref{thm:existence_approximated_solutions}. Then the functionals $F^\varepsilon:[0,T]\to \R$, defined by
    \begin{equation}
        F^\varepsilon(t)=\int_\Omega{\dot{u}^\varepsilon\varphi}\,dx,
    \end{equation}
    are uniformly bounded in $BV(0,T)$ with $\mathrm{var}_0^T(F^\varepsilon)\leq C$ independent of $\varepsilon$, $\nu$ and $s$.
\end{proposition}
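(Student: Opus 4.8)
The plan is to estimate the variation of $F^\varepsilon$ directly from the time-discretized scheme, using the fact that the discretized velocity-test of \eqref{eq:time_discretized_equation_rothe_method} controls the jumps of $F^\varepsilon$. First I would work at the discrete level: test \eqref{eq:time_discretized_equation_rothe_method} with $\varphi$ (a fixed nonnegative function in $H^s_0(\Omega)$), which gives, for each $j$,
\begin{equation*}
    \int_\Omega \frac{v^\varepsilon_{n,j}-v^\varepsilon_{n,j-1}}{h}\varphi\,dx
    = \int_\Omega g\varphi\,dx
    - \int_{\R^d} A D^s u^\varepsilon_{n,j}\cdot D^s\varphi\,dx
    - \nu\int_{\R^d} B D^s v^\varepsilon_{n,j}\cdot D^s\varphi\,dx
    - \int_\Omega \beta^\varepsilon(u^\varepsilon_{n,j})\varphi\,dx.
\end{equation*}
Setting $F^\varepsilon_n(t_j)=\int_\Omega v^\varepsilon_{n,j}\varphi\,dx$, this reads $F^\varepsilon_n(t_j)-F^\varepsilon_n(t_{j-1}) = h\,R_{n,j}$, where $R_{n,j}$ collects the four right-hand terms. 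The crucial sign observation is that $\beta^\varepsilon(u^\varepsilon_{n,j})\leq 0$ pointwise (since $j=I_{[0,+\infty)}$, its Moreau–Yosida approximation has a nonpositive derivative), and $\varphi\geq 0$, so $-\int_\Omega\beta^\varepsilon(u^\varepsilon_{n,j})\varphi\,dx\geq 0$; this is the term that could have bad sign and it does not. Hence the positive part of $R_{n,j}$ is bounded using only $g$, $A D^s u^\varepsilon_{n,j}$ and $\nu B D^s v^\varepsilon_{n,j}$, all controlled by the $\varepsilon,\sigma$-uniform estimates — actually one wants the bounds to be uniform in $\varepsilon$, $\nu$, $s$, so I would invoke instead the energy-type a priori estimates that hold uniformly (the $L^\infty(0,T;H^s_0)$ bound on $u^\varepsilon$, the $L^\infty(0,T;L^2)$ bound on $\dot u^\varepsilon$, and $\nu\|D^s\dot u^\varepsilon\|^2_{L^2(Q_T)}$ bounded), presumably established in Section~\ref{sec:apriori_estimates}.

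Next I would estimate $\mathrm{var}_0^T(F^\varepsilon_n)=\sum_j |F^\varepsilon_n(t_j)-F^\varepsilon_n(t_{j-1})|$. Splitting $R_{n,j}=R^+_{n,j}-R^-_{n,j}$ with $R^-_{n,j}\geq 0$ equal to the $\beta^\varepsilon$-contribution (nonnegative) — wait, more precisely write $|F^\varepsilon_n(t_j)-F^\varepsilon_n(t_{j-1})| \le h|R^{\mathrm{good}}_{n,j}| + h\big|\int_\Omega\beta^\varepsilon(u^\varepsilon_{n,j})\varphi\big|$ will not work because the $\beta^\varepsilon$ term is not small. The correct device, following the standard argument for hyperbolic obstacle problems, is to use that $F^\varepsilon_n$ is the difference of a Lipschitz function and a monotone function: since $-h\int_\Omega\beta^\varepsilon(u^\varepsilon_{n,j})\varphi\,dx\ge 0$, the partial sums $\sum_{j\le k}$ of the $\beta^\varepsilon$-terms form a nonincreasing sequence, so its total variation equals the absolute value of its net increment, i.e. $\sum_j h\big|\int_\Omega\beta^\varepsilon(u^\varepsilon_{n,j})\varphi\big| = \big|\sum_j h\int_\Omega\beta^\varepsilon(u^\varepsilon_{n,j})\varphi\big|$. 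This net sum is, by the telescoped discrete equation, controlled by $\|F^\varepsilon_n\|_\infty$ plus $T$ times the uniform bounds on the good terms — indeed summing the discrete identity over all $j$ gives $F^\varepsilon_n(T)-F^\varepsilon_n(0) = \sum_j h R^{\mathrm{good}}_{n,j} - \sum_j h\int_\Omega\beta^\varepsilon(u^\varepsilon_{n,j})\varphi$, so $\big|\sum_j h\int\beta^\varepsilon(u^\varepsilon_{n,j})\varphi\big| \le |F^\varepsilon_n(T)|+|F^\varepsilon_n(0)| + \sum_j h|R^{\mathrm{good}}_{n,j}|$. Since $\|v^\varepsilon_{n,j}\|_{L^2(\Omega)}$ is uniformly bounded, $|F^\varepsilon_n(t_j)|\le C\|\varphi\|_{L^2(\Omega)}$; and $\sum_j h|R^{\mathrm{good}}_{n,j}| \le C(\|g\|_{L^2(\Omega)}+a^*\sup_t\|D^su^\varepsilon\|_{L^2}+\nu b^*\int_0^T\|D^s\dot u^\varepsilon\|_{L^2}\,d\tau)\|D^s\varphi\|_{L^2}$, all uniform in $\varepsilon,\nu,s$ (the $\nu$-term because $\nu\int_0^T\|D^s\dot u^\varepsilon\|^2 \le C$ implies $\nu\int_0^T\|D^s\dot u^\varepsilon\| \le (\nu T)^{1/2}(\nu\int_0^T\|D^s\dot u^\varepsilon\|^2)^{1/2}\le C$ for bounded $\nu$). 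Combining, $\mathrm{var}_0^T(F^\varepsilon_n)\le C$ with $C$ independent of $n,\varepsilon,\nu,s$.

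Finally I would pass to the limit $n\to\infty$. The piecewise-constant $v^\varepsilon_n$ converge (weakly-$*$ in $L^\infty(0,T;L^2(\Omega))$, and $V^\varepsilon_n(t)\to\dot u^\varepsilon(t)$ in $L^2(\Omega)$ for every $t$, as recorded in the proof of Theorem~\ref{thm:existence_approximated_solutions}), hence $F^\varepsilon_n(t)\to F^\varepsilon(t)$ for every $t\in[0,T]$; lower semicontinuity of the total variation under pointwise convergence then yields $\mathrm{var}_0^T(F^\varepsilon)\le \liminf_n\mathrm{var}_0^T(F^\varepsilon_n)\le C$, completing the proof. The main obstacle is the middle step: the $\beta^\varepsilon$ contribution is \emph{not} individually small (it blows up like $1/\varepsilon$ pointwise), so one genuinely needs the monotone-difference structure — the sign $\beta^\varepsilon\le 0$ together with $\varphi\ge 0$ — to convert a sum of absolute values into the absolute value of a sum, which is then closed off by the uniform energy estimates. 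A secondary technical point is making sure all the a priori bounds invoked are the $\varepsilon,\nu,s$-uniform ones and not merely the $\varepsilon$-dependent estimates \eqref{eq:estimate_1}–\eqref{eq:estimate_3}; this should follow from testing the discrete scheme with $v^\varepsilon_{n,j}$ and using $j^\varepsilon\ge 0$, the ellipticity of $A$ and $B$, and Poincaré's inequality (Lemma~\ref{lemma:poincares_inequality}) with its $s$-independent constant.
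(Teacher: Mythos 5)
Your proposal is correct and follows essentially the same route as the paper: test the Rothe scheme with $\varphi$, exploit the sign $\beta^\varepsilon\leq 0$ together with $\varphi\geq 0$ so that the penalization increments are one-signed and their absolute sum telescopes against $F^\varepsilon_n(T)-F^\varepsilon_n(0)$ plus the $A$-, $B$- and $g$-terms, then pass to the limit in $n$ (the paper via Lemma~\ref{lemma:limit_bv_functions_and_right_limit}, you via lower semicontinuity of the variation — equivalent here). Your explicit care that the bounds must come from the $\varepsilon,\nu,s$-uniform energy estimates of Section~\ref{sec:apriori_estimates}, and your Cauchy--Schwarz handling of the $\nu\|D^s\dot u^\varepsilon\|$ term, are in fact slightly more precise than the paper's own write-up.
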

\begin{proof}
    Let us define the functions $F^\varepsilon_n:[0,T]\to\R$ as
    \begin{equation}
        F^\varepsilon_n(t)=\int_\Omega{V^\varepsilon_n(t)\varphi}\,dx,
    \end{equation}
    where $V^\varepsilon_n$ is the piecewise affine function as defined in \eqref{eq:piecewise_affine_dscretization}. One can check easily from the uniform boundeness from \eqref{eq:piecewise_affine_dscretization} and \eqref{eq:estimate_1} that $V^\varepsilon_n$ in $L^\infty(0,T;L^2(\Omega))$ and consequently also $F^\varepsilon_n\in L^1(0,T)$ uniformly in $\varepsilon$ and $n$.

    Now, we have to estimate the total variation of $F^\varepsilon_n$
    \begin{multline*}
        \left|\int_\Omega{\frac{v^\varepsilon_{n,j}-v^\varepsilon_{n,j-1}}{h}\varphi}\,dx\right|-\int_\Omega{\frac{v^\varepsilon_{n,j}-v^\varepsilon_{n,j-1}}{h}\varphi}\,dx\\
        \leq \left|\int_{\R^d}{AD^s u^\varepsilon_{n,j}\cdot D^s \varphi}\,dx\right|+\left|\int_{\R^d}{\nu BD^s v^\varepsilon_{n,j}\cdot D^s \varphi}\,dx\right|+\left|\int_\Omega{g\varphi\,dx}\,dx\right|.
    \end{multline*}
    Consequently,
    \begin{align*}
        \sum_{j=1}^n{\left|\int_\Omega{(v^\varepsilon_{n,j}-v^\varepsilon_{n,j-1})\varphi}\,dx\right|}
        &\leq \int_\Omega{v^\varepsilon_{n,n}\varphi}\,dx-\int_\Omega{w_1}\varphi\,dx+\left|\int_\Omega{g\varphi\,dx}\,dx\right|\\
        &\qquad\qquad\qquad\qquad+2h\sum_{j=1}^n{\left(\left|\int_{\R^d}{AD^s u^\varepsilon_{n,j} D^s\varphi}\,dx\right|+\left|\int_{\R^d}{\nu BD^s v^\varepsilon_{n,j} D^s\varphi}\,dx\right|\right)}\\
        &\leq\|v^\varepsilon_{n,n}\|_{L^2(\Omega)}\|\varphi\|_{L^2(\Omega)}+\|w_1\|_{L^2(\Omega)}\|\varphi\|_{L^2(\Omega)}+\|g\|_{L^2(\Omega)}\|\varphi\|_{L^2(\Omega)}\\
        &\qquad\qquad\qquad\qquad +2h\|\varphi\|_{H^s_0(\Omega)}\sum_{j=1}^n\left(\|u^\varepsilon_{n,i}\|_{H^s_0(\Omega)}+\|v^\varepsilon_{n,i}\|_{H^s_0(\Omega)}\right)\\
        &\leq C\|\varphi\|_{H^s_0(\Omega)},
    \end{align*}
    with $C$ independent of $n$ and $\varepsilon$. This means that the function $F^\varepsilon_n$ is uniformly $BV(0,T)$ with respect to $n$. Consequently, by applying Lemma \ref{lemma:limit_bv_functions_and_right_limit} to $\{F^\varepsilon_n\}$, we deduce that $F^\varepsilon$ is in $BV(0,T)$. Moreover, since $\mathrm{Var}_0^T(F^\varepsilon)<C$ with $C$ independent of $\varepsilon$, then we can say that the family of functions $F^\varepsilon$ is uniformly bounded in $BV(0,T)$.
\end{proof}

\section{A priori estimates}\label{sec:apriori_estimates}

\begin{proposition}\label{prop:estimates_for_V}
    For every $0<s\leq 1$ and $\nu>0$, the solutions $u^\varepsilon$ of \eqref{eq:regularized_problem} satisfy the estimate
    \begin{equation}\label{eq:uniform_estimate_for_V}
        \frac{1}{2}\|\dot{u}^\varepsilon(t)\|^2_{L^2(\Omega)}+\frac{a_*}{2}\|D^s u^\varepsilon(t)\|^2_{L^2(\R^d;\R^d)}+\nu b_*\|D^s \dot{u}^\varepsilon\|^2_{L^2(0,T;L^2(\R^d;\R^d))}+J^\varepsilon(u^\varepsilon)\leq C.
    \end{equation}
    for $C>0$ independent of $\varepsilon\in(0,1)$ and $\nu$.
\end{proposition}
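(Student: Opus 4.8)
The plan is to read off \eqref{eq:uniform_estimate_for_V} from the energy identity \eqref{eq:energy_equality_varepsilon} of Theorem~\ref{thm:existence_approximated_solutions} by a Gronwall argument. Fix $t\in(0,T]$. First I would use the strict ellipticity of $A$ and $B$ to bound from below the two quadratic forms appearing in the energy, namely $\int_{\R^d}AD^su^\varepsilon(t)\cdot D^su^\varepsilon(t)\,dx\ge a_*\|D^su^\varepsilon(t)\|^2_{L^2(\R^d;\R^d)}$ and $\nu\int_0^t\!\int_{\R^d}BD^s\dot u^\varepsilon\cdot D^s\dot u^\varepsilon\,dx\,d\tau\ge \nu b_*\|D^s\dot u^\varepsilon\|^2_{L^2(0,t;L^2(\R^d;\R^d))}$, together with $j^\varepsilon\ge0$, so that $J^\varepsilon(u^\varepsilon)=\int_\Omega j^\varepsilon(u^\varepsilon(t))\,dx\ge0$. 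Writing $\phi(t)$ for the left-hand side of \eqref{eq:uniform_estimate_for_V} with $(0,T)$ replaced by $(0,t)$, these lower bounds and \eqref{eq:energy_equality_varepsilon} give
\[
  \phi(t)\ \le\ \mathcal{E}^\varepsilon\big(u^\varepsilon(t),\dot u^\varepsilon(t)\big)+\nu\int_0^t\!\int_{\R^d}BD^s\dot u^\varepsilon\cdot D^s\dot u^\varepsilon\,dx\,d\tau\ =\ \mathcal{E}^\varepsilon(w_0,w_1)+\int_0^t\!\int_\Omega g\,\dot u^\varepsilon\,dx\,d\tau .
\]

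Next I would estimate the two terms on the right. For the initial energy, the fundamental property $0\le j^\varepsilon\le j$ of the Moreau--Yosida regularisation yields $\int_\Omega j^\varepsilon(w_0)\,dx\le\int_\Omega j(w_0)\,dx$, which is finite by the compatibility of the data with the constraint (it is simply $0$ in the obstacle cases, where $w_0\in\mathrm{Dom}(\beta)$ a.e.); hence $\mathcal{E}^\varepsilon(w_0,w_1)\le\frac12\|w_1\|^2_{L^2(\Omega)}+\frac{a^*}2\|D^sw_0\|^2_{L^2(\R^d;\R^d)}+\int_\Omega j(w_0)\,dx=:C_0$, a constant independent of $\varepsilon$ and $\nu$. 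For the forcing term I would use Cauchy--Schwarz in space and Young's inequality, $\int_0^t\!\int_\Omega g\,\dot u^\varepsilon\,dx\,d\tau\le\int_0^t\|g\|_{L^2(\Omega)}\|\dot u^\varepsilon(\tau)\|_{L^2(\Omega)}\,d\tau\le\frac T2\|g\|^2_{L^2(\Omega)}+\frac12\int_0^t\|\dot u^\varepsilon(\tau)\|^2_{L^2(\Omega)}\,d\tau$.

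Combining the two displays and using that $\frac12\|\dot u^\varepsilon(\tau)\|^2_{L^2(\Omega)}\le\phi(\tau)$, I obtain $\phi(t)\le C_1+\int_0^t\phi(\tau)\,d\tau$ with $C_1:=C_0+\frac T2\|g\|^2_{L^2(\Omega)}$, and Gronwall's lemma gives $\phi(t)\le C_1e^{T}$ for every $t\in[0,T]$. Since each of the four nonnegative summands of $\phi$ is then bounded by $C_1e^T$ (in particular the viscous integral over $(0,T)$ in \eqref{eq:uniform_estimate_for_V} equals $\phi(T)$ minus the other three terms, hence is also $\le C_1e^T$), estimate \eqref{eq:uniform_estimate_for_V} follows with $C=2C_1e^T$. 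The constant is independent of $\varepsilon$ because only $0\le j^\varepsilon\le j$ is used and no Yosida constant $1/\varepsilon$ ever enters, and independent of $\nu$ because the viscous term was only discarded into the favourable side of the inequality.

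I do not expect a genuine obstacle in this argument: it is a standard hyperbolic energy estimate, and the only point needing a little care is the uniform-in-$\varepsilon$ control of the initial penalisation $\int_\Omega j^\varepsilon(w_0)\,dx$, which is exactly what the monotone convergence $j^\varepsilon\nearrow j$ together with the compatibility of $w_0$ with the constraint provides. If one wanted, in addition, $C$ to be independent of $s$ for $s$ near $1$ (as in the neighbouring estimates of the paper), it suffices to note that the fractional Poincaré constant of Lemma~\ref{lemma:poincares_inequality} is $C/s$ and that, by (the proof of) Lemma~\ref{lemma:strong_convergence_Ds_D_fixed_function}, $\|D^sw_0\|_{L^2(\R^d;\R^d)}$ remains bounded as $s\nearrow1$ whenever $w_0\in H^1_0(\Omega)$.
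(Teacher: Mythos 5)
Your proposal is correct and follows essentially the same route as the paper: start from the energy identity \eqref{eq:energy_equality_varepsilon}, use the ellipticity constants $a_*, b_*$ to bound the quadratic forms from below and $a^*$ together with $J^\varepsilon(w_0)\le J(w_0)$ to bound the initial energy, absorb the forcing term via Young's inequality, and close with Gronwall. The only cosmetic difference is that you run Gronwall on the full energy functional $\phi$ rather than extracting the bound on $\|\dot u^\varepsilon(t)\|_{L^2(\Omega)}$ first, which changes nothing of substance.
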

\begin{proof}
    From the energy identity \eqref{eq:energy_equality_varepsilon} and from the properties of $A$ and $B$, we have the estimate
    \begin{align*}
        &\frac{1}{2}\left(\|\dot{u}^\varepsilon(t)\|^2_{L^2(\Omega)}+a_*\|D^s u^\varepsilon\|^2_{L^2(\R^d;\R^d)}\right)+J(u^\varepsilon(t))+\nu b_*\|D^s\dot{u}^\varepsilon\|^2_{L^2(0,T;L^2(\R^d;\R^d))}\\
        &\qquad\qquad\leq \frac{1}{2}\left(\|\dot{u}^\varepsilon(t)\|^2_{L^2(\Omega)}+\int_{\R^d}{A D^s u^\varepsilon\cdot D^s u^\varepsilon}\,dx\right)+J(u^\varepsilon(t))+\nu \int_0^t{\int_{\R^d}{BD^s \dot{u}^\varepsilon\cdot D^s \dot{u}^\varepsilon}\,dx}\,dt\\
        &\qquad\qquad=\frac{1}{2}\left(\|w_1\|^2_{L^2(\Omega)}+\int_{\R^d}{A D^s w_0\cdot D^s w_0}\,dx\right)+J^\varepsilon(w_0)+\int_0^t{\int_\Omega{g\dot{u}^\varepsilon}\,dx}\,dt\\
        &\qquad\qquad\leq \frac{1}{2}\|w_1\|^2_{L^2(\Omega)}+\frac{a^*}{2}\|D^s w_0\|^2_{L^2(\R^d;\R^d)}+J^\varepsilon(w_0)+\|g\|^2_{L^2(\Omega)}+\frac{1}{4}\|\dot{u}^\varepsilon\|^2_{L^2(0,T; L^2(\Omega))}\\
        &\qquad\qquad\leq \frac{1}{2}\|w_1\|^2_{L^2(\Omega)}+\frac{a^*}{2}\|D^s w_0\|^2_{L^2(\R^d;\R^d)}+J(w_0)+\|g\|^2_{L^2(\Omega)}+\frac{1}{4}\|\dot{u}^\varepsilon\|^2_{L^2(0,T; L^2(\Omega))}.
    \end{align*}
    Applying Gronwall's lemma, we derive that
    \begin{equation*}
        \|\dot{u}^\varepsilon(t)\|_{L^2(\Omega)}\leq C
    \end{equation*}
     This uniform estimate together with the previous one conclude the proof.
\end{proof}

\begin{remark}\label{rem:independence_of_s}
    If $w_0\in H^1_0(\Omega)$ and $1\geq s>\sigma>0$, then we can also consider the constant $C$ obtained in \eqref{eq:uniform_estimate_for_V} independent of $s$, but dependent of $\sigma$. This is due to the fact that from Proposition 2.7 of \cite{bellido2021Gamma}
    \begin{equation}
        \|D^s w_0\|_{L^2(\R^d;\R^d)}\leq \frac{C}{s}\|w_0\|_{H^1_0(\Omega)}\leq \frac{C}{\sigma}\|w_0\|_{H^1_0(\Omega)}.
    \end{equation}
\end{remark}

\begin{lemma}\label{lemma:auxiliary_lemma}
    There exist constants $c_1>0$, $c_2\geq 0$ independent of $\varepsilon\in(0,1)$ such that
    \begin{equation}
        c_1|\beta^\varepsilon(r)|\leq \beta^\varepsilon(r) r+c_2,\quad\forall r\in\R.
    \end{equation}
\end{lemma}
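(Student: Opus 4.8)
The plan is to exploit the structure of the Moreau--Yosida approximation $\beta^\varepsilon=\partial j^\varepsilon=(j^\varepsilon)'$ together with the fact that $0\in\beta(0)$, so that $\beta^\varepsilon(0)=0$ and $j^\varepsilon\geq j^\varepsilon(0)=0$, and $j^\varepsilon$ is convex. The key observation is that, by convexity of $j^\varepsilon$ and $\beta^\varepsilon(0)=0$, we always have the sign condition $\beta^\varepsilon(r)r\geq 0$, so that $\beta^\varepsilon(r)r=|\beta^\varepsilon(r)|\,|r|$. Hence the claimed inequality $c_1|\beta^\varepsilon(r)|\leq \beta^\varepsilon(r)r+c_2=|\beta^\varepsilon(r)|\,|r|+c_2$ is automatic as soon as $|r|\geq c_1$; the only issue is the regime where $|r|<c_1$, where one needs the uniform bound $|\beta^\varepsilon(r)|\leq c_2/c_1$ for all such $r$ and all $\varepsilon\in(0,1)$.

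First I would recall the explicit description of the resolvent and of $\beta^\varepsilon$: writing $J_\varepsilon=(I+\varepsilon\beta)^{-1}$ for the resolvent of the maximal monotone graph $\beta$, one has $\beta^\varepsilon(r)=\tfrac1\varepsilon\bigl(r-J_\varepsilon r\bigr)$ and $\beta^\varepsilon(r)\in\beta(J_\varepsilon r)$, and $|\beta^\varepsilon(r)|$ is nondecreasing as $\varepsilon\searrow 0$ with $|\beta^\varepsilon(r)|\leq|\beta^0(r)|$, where $\beta^0(r)$ denotes the minimal-norm element of $\beta(r)$ when $r\in\mathrm{Dom}(\beta)$ (see \cite[Chapter~2]{barbu2010Nonlinear}). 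The dichotomy I would then carry out is: (i) if $r$ lies in the interior of $\mathrm{Dom}(\beta)$ relative to a neighbourhood of $0$ — which is nonempty precisely because $0\in\mathrm{Dom}(\beta)$ and, in all the cases considered, $0$ is an interior point of $\mathrm{Dom}(\beta)$ or $\mathrm{Dom}(\beta)$ is a half-line/interval containing $0$ — then $\beta^0$ is locally bounded near $0$, so there is $\delta>0$ with $|\beta^0(r)|\leq M$ for $|r|\le\delta$, and (ii) for $|r|\ge\delta$ we use $\beta^\varepsilon(r)r=|\beta^\varepsilon(r)||r|\ge \delta|\beta^\varepsilon(r)|$. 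Taking $c_1=\delta$ and $c_2=\delta M$ then gives $c_1|\beta^\varepsilon(r)|\le\beta^\varepsilon(r)r+c_2$ in both regimes, with constants independent of $\varepsilon$.

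The main obstacle — and the point requiring a little care — is the case where $0$ is an \emph{endpoint} of $\mathrm{Dom}(\beta)$, e.g. the lower obstacle case $\mathrm{Dom}(\beta)=[0,+\infty)$ with $\beta=\partial I_{[0,+\infty)}$, since then $\beta^0$ need not be bounded near $0$ from the constrained side ($\beta^0(0)=0$ but values of $\beta$ just outside the domain are governed by the subdifferential of the indicator). Here I would argue directly from $\beta^\varepsilon(r)=\tfrac1\varepsilon(r-J_\varepsilon r)$: for the indicator of $[0,+\infty)$ one has $J_\varepsilon r=\max(r,0)=r^+$, so $\beta^\varepsilon(r)=\tfrac1\varepsilon(r-r^+)=-\tfrac1\varepsilon r^-\le 0$, hence $|\beta^\varepsilon(r)|=\tfrac1\varepsilon r^-$ and $\beta^\varepsilon(r)r=\tfrac1\varepsilon (r^-)^2=|\beta^\varepsilon(r)|\,r^-=|\beta^\varepsilon(r)||r|$ on $\{r<0\}$, while $\beta^\varepsilon(r)=0$ for $r\ge0$. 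Thus the inequality holds with $c_2=0$ for \emph{any} $c_1$, e.g. $c_1=1$. The analogous elementary computation handles $\mathrm{Dom}(\beta)=(-\infty,b]$ and $\mathrm{Dom}(\beta)=[a,b]$, and when $\beta$ is a genuine (possibly discontinuous) nondecreasing function with $\mathrm{Dom}(\beta)=\R$ one is back in case (i) above with $\beta^0$ locally bounded near $0$. In all cases the constants $c_1,c_2$ depend only on the fixed graph $\beta$ and not on $\varepsilon$, which is exactly what is claimed.
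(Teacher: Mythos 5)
Your overall strategy --- use $\beta^\varepsilon(0)=0$ and monotonicity to get the sign identity $\beta^\varepsilon(r)r=|\beta^\varepsilon(r)|\,|r|$, dispose of the region $|r|\ge c_1$ with it, and control the remaining bounded region by a uniform-in-$\varepsilon$ bound on $|\beta^\varepsilon|$ --- is essentially the paper's own two-regime argument (the paper handles the set "outside a sufficiently big neighbourhood of $0$" by the sign condition and absorbs the rest into $c_2$ via a supremum, taken there over $r\in D(\beta)$). Your case (i), where $0$ lies in the interior of $\mathrm{Dom}(\beta)$, is correct: on $[-\delta,\delta]\subset\mathrm{int}\,\mathrm{Dom}(\beta)$ one has $|\beta^\varepsilon(r)|\le|\beta^0(r)|\le M$ uniformly in $\varepsilon$, and $c_1=\delta$, $c_2=\delta M$ work.

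Your case (ii) --- the endpoint case $\beta=\partial I_{[0,+\infty)}$, which is precisely the one used in Sections 5 and 6 --- contains a genuine error. From $\beta^\varepsilon(r)=r/\varepsilon$ for $r<0$ you correctly obtain $\beta^\varepsilon(r)r=|\beta^\varepsilon(r)|\,|r|=r^2/\varepsilon$, but this gives $c_1|\beta^\varepsilon(r)|\le\beta^\varepsilon(r)r$ only when $|r|\ge c_1$; it does not hold ``with $c_2=0$ for any $c_1$''. Concretely, for $c_1=1$ and $r=-1/2$ one has $c_1|\beta^\varepsilon(r)|=\tfrac{1}{2\varepsilon}>\tfrac{1}{4\varepsilon}=\beta^\varepsilon(r)r$. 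Worse, the deficit on $0<|r|<c_1$, namely $|\beta^\varepsilon(r)|(c_1-|r|)=\tfrac{1}{\varepsilon}|r|(c_1-|r|)$, blows up as $\varepsilon\searrow0$, so it cannot be absorbed into any finite $c_2$ either: on $(\R\setminus\mathrm{Dom}(\beta))\cap\{|r|<c_1\}$ the quantity $|\beta^\varepsilon(r)|$ is not uniformly bounded in $\varepsilon$ (the bound $|\beta^\varepsilon(r)|\le|\beta^0(r)|$ you invoke is only available for $r\in\mathrm{Dom}(\beta)$), which is exactly why your ``uniform bound near $0$'' step has nothing to fall back on there. Note that the paper's own $c_2$ is a supremum restricted to $r\in D(\beta)$, i.e. to the region your case (i) already covers; the small-$|r|$ region outside $\mathrm{Dom}(\beta)$ is the real sticking point, and in the one-sided obstacle case it appears to force either an additional term (e.g. one proportional to $|r|$, as in the standard Brezis-type comparison, which is harmless in the subsequent $L^1$ estimate since $u^\varepsilon$ is bounded in $L^\infty(0,T;L^2(\Omega))$) or a restriction of the range of $r$ in the statement. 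As written, your case (ii) does not establish the claimed inequality.
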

\begin{proof}
    It is easy to check that outside a sufficiently big neighborhood $N\subset\R$ of $0$ there exists $c_1>0$ such that
    \begin{equation*}
        c_1|\beta^\varepsilon(r)|\leq \beta^\varepsilon(r)r.
    \end{equation*}
    Taking
    \begin{equation*}
        c_2=\max\{\left|c_1|\beta^\varepsilon(r)|-\beta^\varepsilon(r)r\right|:\, \varepsilon\in(0,1), r\in D(\beta)\},
    \end{equation*}
    we conclude the proof.
\end{proof}

\begin{proposition}
    For $t \in (0, T]$ denote $Q_t = [0, t] \times \Omega$. Then there exists a constant $C > 0$ independent of $t \in (0, T]$, $\varepsilon \in (0, 1)$ and of $\nu> 0$ such that
    \begin{equation}
        \iint_{Q_t}{|\beta^\varepsilon(u^\varepsilon)|}\,dx\,dt\leq C.
    \end{equation}
    Moreover, in the case where $w_0\in H^1_0(\Omega)$ and $1\geq s>\sigma>0$, the previous constant $C$ can also be independent of $s$, but still dependent on $\sigma$.
\end{proposition}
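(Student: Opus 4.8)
The plan is to test the regularized equation \eqref{eq:regularized_problem} (or rather its time-discrete version \eqref{eq:time_discretized_equation_rothe_method}) with a well-chosen test function that isolates the term $\int_\Omega \beta^\varepsilon(u^\varepsilon) u^\varepsilon\,dx$, since by Lemma \ref{lemma:auxiliary_lemma} controlling $\iint_{Q_t}\beta^\varepsilon(u^\varepsilon)u^\varepsilon$ gives control of $\iint_{Q_t}|\beta^\varepsilon(u^\varepsilon)|$ up to the additive constant $c_2|Q_T|$. The natural test function here is $\varphi=u^\varepsilon$ itself (in the continuous formulation \eqref{eq:regularized_problem_without_identification}, which we may use since $\xi^\varepsilon=\beta^\varepsilon(u^\varepsilon)\in L^2(0,t;L^2(\Omega))$ by Theorem \ref{thm:existence_approximated_solutions}). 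First I would write
\begin{equation*}
    \int_0^t\!\!\int_\Omega \beta^\varepsilon(u^\varepsilon)u^\varepsilon\,dx\,d\tau
    = \int_0^t\!\!\int_\Omega g u^\varepsilon\,dx\,d\tau
    - \int_\Omega \dot u^\varepsilon(t)u^\varepsilon(t)\,dx
    + \int_\Omega w_1 w_0\,dx
    + \int_0^t\!\!\int_\Omega |\dot u^\varepsilon|^2\,dx\,d\tau
    - \int_0^t\!\!\int_{\R^d}\!\big(AD^su^\varepsilon\!\cdot\!D^su^\varepsilon + \nu BD^s\dot u^\varepsilon\!\cdot\!D^su^\varepsilon\big)dx\,d\tau,
\end{equation*}
after integrating the $\ddot u^\varepsilon u^\varepsilon$ term by parts in time.

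The second step is to bound each term on the right by the a priori estimates already established in Proposition \ref{prop:estimates_for_V}. The estimate \eqref{eq:uniform_estimate_for_V} gives, uniformly in $\varepsilon$, $\nu$ (and, under Remark \ref{rem:independence_of_s}, in $s>\sigma$): $\|\dot u^\varepsilon(t)\|_{L^2(\Omega)}\le C$, $\|D^su^\varepsilon(t)\|_{L^2}\le C$ (hence $\|u^\varepsilon(t)\|_{L^2(\Omega)}\le C$ by the fractional Poincaré inequality, Lemma \ref{lemma:poincares_inequality}, with constant $C/s\le C/\sigma$), $\|\dot u^\varepsilon\|_{L^2(0,T;L^2(\Omega))}\le C$, and $\sqrt\nu\|D^s\dot u^\varepsilon\|_{L^2(0,T;L^2(\R^d;\R^d))}\le C$. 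Thus $\big|\int_\Omega\dot u^\varepsilon(t)u^\varepsilon(t)\big|\le C$, the $g$-term and the $|\dot u^\varepsilon|^2$-term are $\le C$, the $AD^su^\varepsilon\cdot D^su^\varepsilon$ term is $\le a^*\|D^su^\varepsilon\|^2_{L^2}\le C$, and the crossed viscous term is handled by Cauchy–Schwarz as $\nu\int_0^t\!\int_{\R^d}|B D^s\dot u^\varepsilon||D^su^\varepsilon|\le b^*\sqrt\nu\,\|\sqrt\nu D^s\dot u^\varepsilon\|_{L^2(Q_t)}\sup_\tau\|D^su^\varepsilon(\tau)\|_{L^2}\le C$ (using $\nu\le$ some fixed bound, or more carefully $\sqrt\nu\le \max(1,\sqrt\nu)$; since the claim is for $\nu>0$ bounded this is fine, and the $\sqrt\nu$ factor even gives smallness). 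Hence $\iint_{Q_t}\beta^\varepsilon(u^\varepsilon)u^\varepsilon\,dx\,d\tau\le C$ with $C$ independent of $\varepsilon$, $\nu$, $t$.

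The final step applies Lemma \ref{lemma:auxiliary_lemma}: $c_1\iint_{Q_t}|\beta^\varepsilon(u^\varepsilon)|\,dx\,d\tau \le \iint_{Q_t}\beta^\varepsilon(u^\varepsilon)u^\varepsilon\,dx\,d\tau + c_2|Q_T| \le C$, giving the claimed bound on dividing by $c_1>0$. For the $s$-independence assertion, one notes that the only place $s$ enters the constants is through $\|D^sw_0\|_{L^2}$ (controlled uniformly for $s>\sigma$ by Remark \ref{rem:independence_of_s}) and through the Poincaré constant $C/s\le C/\sigma$; both are uniform over $s\in(\sigma,1]$. I expect the only mild subtlety — the ``main obstacle'' such as it is — to be the bookkeeping of the crossed viscous term $\nu\int BD^s\dot u^\varepsilon\cdot D^su^\varepsilon$: one must resist splitting off a full $\nu\|D^s\dot u^\varepsilon\|^2$ (which is only $O(1/\nu)$, not $O(1)$) and instead keep the $\sqrt\nu$ paired with $\|\sqrt\nu D^s\dot u^\varepsilon\|_{L^2(Q_t)}=O(1)$, so that the product stays bounded (indeed small) as $\nu\searrow0$; everything else is a routine application of \eqref{eq:uniform_estimate_for_V} and Lemma \ref{lemma:poincares_inequality}. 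One should also mention that the computation is rigorously justified at the discrete level (testing \eqref{eq:time_discretized_equation_rothe_method} with $\varphi=hu^\varepsilon_{n,j}$ and summing in $j$, then passing to the limit $n\to\infty$ using the convergences established in the proof of Theorem \ref{thm:existence_approximated_solutions}), or directly in \eqref{eq:regularized_problem_without_identification} since $\xi^\varepsilon\in L^2(0,T;L^2(\Omega))$ there.
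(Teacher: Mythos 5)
Your proposal is correct and follows essentially the same route as the paper: test the regularized formulation with $\varphi=u^\varepsilon$, integrate the acceleration term by parts in time, bound every resulting term via the a priori estimate \eqref{eq:uniform_estimate_for_V} (pairing $\sqrt{\nu}$ with $\sqrt{\nu}\,\|D^s\dot u^\varepsilon\|_{L^2(0,T;L^2(\R^d;\R^d))}$ for the crossed viscous term, exactly as needed), and conclude with Lemma \ref{lemma:auxiliary_lemma}. Your remarks on the discrete-level justification and on the uniformity in $s$ via Remark \ref{rem:independence_of_s} and Lemma \ref{lemma:poincares_inequality} are consistent with, and slightly more explicit than, the paper's argument.
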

\begin{proof}
    Testing \eqref{eq:regularized_problem} with $\varphi=u^\varepsilon$ we get
    \begin{align*}
        &\int_0^t{\int_\Omega{\beta^\varepsilon(u^\varepsilon)u^\varepsilon}\,dx}\,d\tau
        +\alpha \|D^s u^\varepsilon\|_{L^2(0,t; L^2(\Omega))}^2\\
        &\qquad\qquad\leq \int_0^t{\int_\Omega{ \beta^\varepsilon(u^\varepsilon)u^\varepsilon}\,dx}\,d\tau+\int_0^t{\int_{\R^d}{A D^s u^\varepsilon\cdot D^s u^\varepsilon}\,dx}\,d\tau\\
        &\qquad\qquad=\int_\Omega{w_1 w_0}\,dx+\int_0^t{\int_\Omega{|\dot{u}^\varepsilon|^2+gu^\varepsilon}\,dx}\,d\tau-\nu \int_0^t{\int_{\R^d}{B D^s \dot{u}^\varepsilon \cdot D^s u^\varepsilon}\,dx}\,d\tau-\int_\Omega{\dot{u}^\varepsilon(t) u^\varepsilon(t)}\,dx\\
        &\qquad\qquad\leq \|w_1\|_{L^2(\Omega)}\|w_0\|_{L^2(\Omega)}+T\|\dot{u}^\varepsilon\|_{L^\infty(0,T;L^2(\Omega))}^2+T\|g\|_{L^2(\Omega)}\|u^\varepsilon\|_{L^\infty(0,T; L^2(\Omega))}\\
        &\qquad\qquad\qquad\qquad\qquad+\nu b^*\|D^s \dot{u}^\varepsilon\|_{L^2(0,T;L^2(\R^d;\R^d))} \|D^s u^\varepsilon\|_{L^2(0,T;L^2(\R^d;\R^d))}+\|\dot{u}^\varepsilon(T)\|_{L^2(\Omega)}\|u^\varepsilon(T)\|_{L^2(\Omega)},
    \end{align*}
    which is bounded by a constant $C'$ due to Proposition \ref{prop:estimates_for_V}. Consequently, by applying Lemma \ref{lemma:auxiliary_lemma}, we conclude that
    \begin{equation*}
        \iint_{Q_t}{|\beta^\varepsilon(u^\varepsilon)|}\,dx\,dt\leq \frac{1}{c_1}\iint_{Q_t}{\left(\beta^\varepsilon(u^\varepsilon)u^\varepsilon+c_2\right)}\,dx\,dt\leq \frac{1}{c_1}\iint_{Q_t}{\beta^\varepsilon(u^\varepsilon)u^\varepsilon}\,dx\,dt+\frac{c_2}{c_1}|\Omega|T\leq C
    \end{equation*}
\end{proof}

\begin{proposition}\label{prop:uniform_estimate_beta}
    Let $t \in (0, T]$ and $s \in (0, 1]$ be given. Then the sequence $\beta^\varepsilon(u^\varepsilon)$ is uniformly bounded in $\mathcal{V}'_{s,t}$.
\end{proposition}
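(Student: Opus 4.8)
The plan is to read the bound off directly from the weak formulation \eqref{eq:regularized_problem}. Since $\beta^\varepsilon(u^\varepsilon)\in L^2(0,t;L^2(\Omega))$ by Theorem~\ref{thm:existence_approximated_solutions}, its action as an element of $\mathcal{V}'_{s,t}$ on a test function $\varphi\in\mathcal{V}_{s,t}$ is simply $\langle\beta^\varepsilon(u^\varepsilon),\varphi\rangle_{\mathcal{V}'_{s,t}\times\mathcal{V}_{s,t}}=\iint_{Q_t}\beta^\varepsilon(u^\varepsilon)\varphi$, so it suffices to produce a constant $C$, independent of $\varepsilon\in(0,1)$, with $\big|\iint_{Q_t}\beta^\varepsilon(u^\varepsilon)\varphi\big|\le C\|\varphi\|_{\mathcal{V}_{s,t}}$ for every $\varphi\in\mathcal{V}_{s,t}$, and then take the supremum over the unit ball of $\mathcal{V}_{s,t}$.

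Solving \eqref{eq:regularized_problem} for the penalty term gives, for all $\varphi\in\mathcal{V}_{s,t}$,
\begin{align*}
    \iint_{Q_t}\beta^\varepsilon(u^\varepsilon)\varphi\,dx\,d\tau
    &=\int_\Omega w_1\varphi(0)\,dx-\int_\Omega\dot u^\varepsilon(t)\varphi(t)\,dx+\iint_{Q_t}\dot u^\varepsilon\dot\varphi\,dx\,d\tau\\
    &\quad+\iint_{Q_t}g\varphi\,dx\,d\tau-\int_0^t\!\!\int_{\R^d}\!\big(AD^su^\varepsilon\cdot D^s\varphi+\nu BD^s\dot u^\varepsilon\cdot D^s\varphi\big)\,dx\,d\tau .
\end{align*}
I would then bound the six terms on the right one by one. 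For the two endpoint-in-time terms I use the continuous embedding $\mathcal{V}_{s,t}\hookrightarrow H^1(0,t;L^2(\Omega))\hookrightarrow C([0,t];L^2(\Omega))$, so that $\|\varphi(0)\|_{L^2(\Omega)}+\|\varphi(t)\|_{L^2(\Omega)}\le C\|\varphi\|_{\mathcal{V}_{s,t}}$, together with $\|w_1\|_{L^2(\Omega)}\le C$ and the uniform estimate $\|\dot u^\varepsilon\|_{L^\infty(0,T;L^2(\Omega))}\le C$ from Proposition~\ref{prop:estimates_for_V}. The term $\iint_{Q_t}\dot u^\varepsilon\dot\varphi$ is controlled by $\|\dot u^\varepsilon\|_{L^2(0,t;L^2(\Omega))}\|\dot\varphi\|_{L^2(0,t;L^2(\Omega))}\le C\|\varphi\|_{\mathcal{V}_{s,t}}$ via Cauchy--Schwarz and the same bound, and $\iint_{Q_t}g\varphi\le\|g\|_{L^2(Q_t)}\|\varphi\|_{L^2(Q_t)}\le C\|\varphi\|_{\mathcal{V}_{s,t}}$. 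For the elliptic term, boundedness of $A$ gives $\big|\int_0^t\!\int_{\R^d}AD^su^\varepsilon\cdot D^s\varphi\big|\le a^*\|D^su^\varepsilon\|_{L^2(0,t;L^2(\R^d;\R^d))}\|D^s\varphi\|_{L^2(0,t;L^2(\R^d;\R^d))}\le C\|\varphi\|_{\mathcal{V}_{s,t}}$, using $\|D^su^\varepsilon\|_{L^\infty(0,T;L^2(\R^d;\R^d))}\le C$, again from Proposition~\ref{prop:estimates_for_V}.

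The only term deserving attention is the viscous one, since there is no $\varepsilon$-uniform bound on $\|D^s\dot u^\varepsilon\|_{L^2(0,t;L^2(\R^d;\R^d))}$ by itself, only the degenerate estimate $\nu b_*\|D^s\dot u^\varepsilon\|^2_{L^2(0,T;L^2(\R^d;\R^d))}\le C$ of Proposition~\ref{prop:estimates_for_V}. Hence $\nu\big|\int_0^t\!\int_{\R^d}BD^s\dot u^\varepsilon\cdot D^s\varphi\big|\le\nu b^*\|D^s\dot u^\varepsilon\|_{L^2(0,t;L^2)}\|D^s\varphi\|_{L^2(0,t;L^2)}\le b^*\sqrt{\nu C/b_*}\,\|\varphi\|_{\mathcal{V}_{s,t}}$, which is $\le C\|\varphi\|_{\mathcal{V}_{s,t}}$ for bounded $\nu$ (and in fact vanishes as $\nu\searrow 0$). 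Adding the six estimates and passing to the supremum over $\|\varphi\|_{\mathcal{V}_{s,t}}\le 1$ yields $\|\beta^\varepsilon(u^\varepsilon)\|_{\mathcal{V}'_{s,t}}\le C$ uniformly in $\varepsilon\in(0,1)$; moreover, when $w_0\in H^1_0(\Omega)$ and $s\in(\sigma,1]$, the constants above can be taken independent of $s$ by Remark~\ref{rem:independence_of_s}, so the bound is then also uniform in $s$. There is no genuine obstacle here — the argument is essentially bookkeeping — the two points requiring care being to keep the viscous term under control with the $\nu$-weighted bound and to invoke the time-trace embedding for the endpoint terms.
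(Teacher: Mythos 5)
Your proposal is correct and follows essentially the same route as the paper: isolate the penalty term in the weak formulation \eqref{eq:regularized_problem} and bound each remaining term by $C\|\varphi\|_{\mathcal{V}_{s,t}}$ using Cauchy--Schwarz and the a priori estimates of Proposition \ref{prop:estimates_for_V}. Your explicit treatment of the viscous term via $\nu\|D^s\dot u^\varepsilon\|\le\sqrt{\nu}\cdot\sqrt{\nu}\|D^s\dot u^\varepsilon\|\le C\sqrt{\nu}$ is a welcome clarification of a point the paper leaves implicit.
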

\begin{proof}
    This is a simple consequence of \eqref{eq:regularized_problem} and of the estimate \eqref{eq:uniform_estimate_for_V}. In fact, for any $w\in\mathcal{V}_s$, we get that
    \begin{align*}
        \langle \beta^\varepsilon(u^\varepsilon), \varphi\rangle_{\mathcal{V}'_{s,t}\times\mathcal{V}_{s,t}}
        &=\int_\Omega{w_1 \varphi(0)}\,dx
        -\int_\Omega{\dot{u}(t) \varphi(t)}\,dx
        +\int_0^t{\int_{\Omega}{\dot{u}^\varepsilon \dot{\varphi}+g\varphi}\,dx}\,d\tau\\
        &\qquad\qquad-\int_0^t{\int_{\R^d}{A D^s u^\varepsilon\cdot D^s \varphi}\,dx}\,d\tau 
        - \nu \int_0^t{\int_{\R^d}{B D^s \dot{u}^\varepsilon \cdot D^s \varphi}\,dx}\,d\tau\\
        &\leq \|w_1\|_{L^2(\Omega)}\|\varphi(0)\|_{L^2(\Omega)}
        +\|\dot{u}(t)\|_{L^2(\Omega)}\|\varphi(t)\|_{L^2(\Omega)}
        +\|\dot{u}\|_{L^2(0,T;L^2(\Omega))}\|\dot{\varphi}\|_{L^2(0,T;L^2(\Omega))}\\
        &\qquad\qquad+\|g\|_{L^2(\Omega)}\|\varphi\|_{L^2(0,T;L^2(\Omega))}
        +a^*\|D^s u^\varepsilon\|_{L^2(0,t;L^2(\Omega))}\|D^s \varphi\|_{L^2(0,T;L^2(\Omega))}\\
        &\qquad\qquad\qquad\qquad+\nu b^*\|D^s\dot{u}^\varepsilon\|_{L^2(0,T;L^2(\Omega))}\|D^s \varphi\|_{L^2(0,T;L^2(\Omega))}\\
        &\leq C\|\varphi\|_{\mathcal{V}_{s,t}}.
    \end{align*}
\end{proof}

\begin{remark}
    If $s>\sigma>0$ and $w_0\in H^1_0(\Omega)$, then there exists a constant $C>0$ independent of $s$, but possibly dependent on $\sigma$ such that $\|\beta^\varepsilon(u^\varepsilon)\|_{\mathcal{V}'_s}\leq C$. This is a consequence of Remark \ref{rem:independence_of_s}.
\end{remark}

\begin{proposition}\label{prop:estimate_for_X}
    Let $k$ be a sufficiently big number depending on $d$ such that $L^1(\Omega)\subset H^{-k}$, $H^{-s}(\Omega)\subset H^{-k}(\Omega)$ for all $s\in(0,1)$ with the embeddings being continuous and compact. There exists a constant $C>0$ independent of $\varepsilon\in(0,1)$ and $\nu$ such that
    \begin{equation}
        \|\ddot{u}^\varepsilon\|_{L^1(0,T; H^{-k})}\leq C.
    \end{equation}
\end{proposition}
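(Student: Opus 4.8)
The plan is to isolate $\ddot u^\varepsilon$ from the equation and estimate it term by term. By Theorem \ref{thm:existence_approximated_solutions} and the remark following it, \eqref{eq:regularized_problem_equation_form} holds in the sense of distributions with $\ddot u^\varepsilon\in L^2(0,T;L^2(\Omega))$ and $\beta^\varepsilon(u^\varepsilon)\in L^2(0,T;L^2(\Omega))$, so the identity
\begin{equation*}
    \ddot u^\varepsilon = D^s\cdot(A D^s u^\varepsilon) + \nu\, D^s\cdot(B D^s \dot u^\varepsilon) - \beta^\varepsilon(u^\varepsilon) + g
\end{equation*}
may be read as an equality a.e. in $t$ of elements of $H^{-k}(\Omega)$ once $k$ is chosen so that all four right-hand terms embed there; the conclusion then follows from the triangle inequality in $L^1(0,T;H^{-k}(\Omega))$.

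\textbf{Choice of $k$.} Fix $k>d/2$ with $k\ge 1$. Then $H^k_0(\Omega)\hookrightarrow C(\overline\Omega)$, so by duality $L^1(\Omega)\hookrightarrow H^{-k}(\Omega)$ continuously; moreover the Fourier characterisation of the fractional norms together with $1+|2\pi\xi|^{2s}\le 2\bigl(1+|2\pi\xi|^{2k}\bigr)$ for all $s\in(0,1)$ gives $H^k_0(\Omega)\hookrightarrow H^s_0(\Omega)$ with embedding constant $\le\sqrt2$ independent of $s$, hence $H^{-s}(\Omega)\hookrightarrow H^{-k}(\Omega)$ with the same uniform constant; the compactness of both embeddings is Rellich--Kondrachov. (The compactness itself is not needed here; it only serves to fix one $k$ valid for all later arguments.)

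\textbf{The four terms.} For the elliptic term, the duality $\langle D^s\cdot(A D^s u^\varepsilon),\varphi\rangle=-\int_{\R^d}A D^s u^\varepsilon\cdot D^s\varphi$ and $|A\eta\cdot\zeta|\le a^*|\eta||\zeta|$ give, for a.e.\ $t$,
\begin{equation*}
    \|D^s\cdot(A D^s u^\varepsilon(t))\|_{H^{-s}(\Omega)}\le a^*\|D^s u^\varepsilon(t)\|_{L^2(\R^d;\R^d)}\le C,
\end{equation*}
uniformly in $t,\varepsilon,\nu$ by \eqref{eq:uniform_estimate_for_V}; composing with $H^{-s}(\Omega)\hookrightarrow H^{-k}(\Omega)$ and integrating on $(0,T)$ bounds this term in $L^1(0,T;H^{-k}(\Omega))$. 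The viscous term is similar: $\|\nu D^s\cdot(B D^s\dot u^\varepsilon)\|_{H^{-s}(\Omega)}\le\nu b^*\|D^s\dot u^\varepsilon\|_{L^2(\R^d;\R^d)}$, and from $\nu b_*\|D^s\dot u^\varepsilon\|^2_{L^2(0,T;L^2(\R^d;\R^d))}\le C$ in Proposition \ref{prop:estimates_for_V} together with Cauchy--Schwarz in time,
\begin{equation*}
    \nu\,\|D^s\dot u^\varepsilon\|_{L^1(0,T;L^2(\R^d;\R^d))}\le \sqrt T\,\sqrt\nu\,\Bigl(\sqrt\nu\,\|D^s\dot u^\varepsilon\|_{L^2(0,T;L^2(\R^d;\R^d))}\Bigr)\le C\sqrt\nu,
\end{equation*}
which is bounded uniformly for $\nu$ in a bounded range (indeed $\to0$ as $\nu\searrow0$). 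The penalty term is already controlled in $L^1(0,T;L^1(\Omega))$ with constant independent of $t,\varepsilon,\nu$ by the Proposition preceding Proposition \ref{prop:uniform_estimate_beta}, and $L^1(\Omega)\hookrightarrow H^{-k}(\Omega)$. Finally $\|g\|_{L^1(0,T;H^{-k}(\Omega))}\le C_k\,T\,\|g\|_{L^2(\Omega)}$ since $g$ is time-independent and $L^2(\Omega)\hookrightarrow H^{-k}(\Omega)$. Adding the four estimates gives $\|\ddot u^\varepsilon\|_{L^1(0,T;H^{-k}(\Omega))}\le C$ with $C$ independent of $\varepsilon$ and $\nu$ (and independent of $s$ as well whenever $w_0\in H^1_0(\Omega)$, by Remark \ref{rem:independence_of_s}).

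\textbf{Main point.} There is no genuine difficulty here; the one thing requiring care is the calibration of $k$: it must be large enough ($k>d/2$) for $L^1(\Omega)\hookrightarrow H^{-k}(\Omega)$, since the penalty term is controlled only in $L^1$ and in no space $H^{-s}$ uniformly in $\varepsilon$, while at the same time $k$ must dominate $s$ with an $s$-independent embedding constant so that the elliptic and viscous contributions are handled uniformly.
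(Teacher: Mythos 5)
Your argument is correct and is precisely the "comparison argument in the equation from the point of view of distributions" that the paper's one-sentence proof alludes to: you solve \eqref{eq:regularized_problem_equation_form} for $\ddot u^\varepsilon$ and bound the elliptic, viscous, penalty and source terms in $H^{-k}(\Omega)$ using Proposition \ref{prop:estimates_for_V} and the uniform $L^1$ bound on $\beta^\varepsilon(u^\varepsilon)$, exactly as intended. Your explicit calibration of $k>d/2$ and the uniform-in-$s$ embedding constants are welcome details that the paper leaves implicit.
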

\begin{proof}
    Having in mind the previous propositions of this section, we observe that this result is just a comparison argument in the equation \eqref{eq:regularized_problem_equation_form} from the point of view of distributions.
\end{proof}

\section{Passage to the limit as $\varepsilon\to 0$}\label{sec:existence_s_fixed}
With the estimates that we have obtained in the previous section, we are now in position to study the behavior of the functions $u^\varepsilon$ when we take $\varepsilon\to 0$.

\begin{proposition}\label{prop:limit_problem_when_varepsilon_to_0}
    Let $s\in(0, 1]$, $\nu > 0$, $t \in (0, T]$, $w_0 \in H^s_0(\Omega)$, and $w_1\in L^2(\Omega)$ be given. Then there exist a function $u\in L^\infty(0,T; H^s_0(\Omega))\cap H^1(0,T; L^2(\Omega))$ with $u(0)=w_0$, $\dot{u}(0)=w_1$ and a distribution $\xi_t\in\mathcal{V}'_{s,t}$ that satisfy the identity 
    \begin{equation}\label{eq:limit_problem_s_fixed}
        \begin{aligned}
            \int_{\Omega}{\dot{u}(t) \varphi(t)}\,dx
            -\int_\Omega{w_1 \varphi(0)}\,dx
            &-\int_0^t{\int_\Omega{\dot{u} \dot{\varphi}}\,dx}\,d\tau
            +\langle \xi_t, \varphi\rangle_{\mathcal{V}'_{s,t}\times\mathcal{V}_{s,t}}\\
            &\quad +\int_0^t{\int_{\R^d}{\left(A D^s u\cdot D^s \varphi + \nu B D^s \dot{u} \cdot D^s \varphi\right)}\,dx}\,d\tau
            =\int_0^t{\int_{\Omega}{g\varphi}\,dx}\,d\tau,
        \end{aligned}
    \end{equation}
    for all $\varphi\in\mathcal{V}_{s,t}$.
\end{proposition}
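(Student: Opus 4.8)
The plan is to pass to the limit $\varepsilon \to 0$ in the penalized weak formulation \eqref{eq:regularized_problem}, using the uniform (in $\varepsilon$, for fixed $s$ and $\nu$) estimates collected in Section \ref{sec:apriori_estimates}. First I would extract weakly/weakly-$*$ convergent subsequences: from Proposition \ref{prop:estimates_for_V} the family $u^\varepsilon$ is bounded in $L^\infty(0,T;H^s_0(\Omega))$, $\dot u^\varepsilon$ is bounded in $L^\infty(0,T;L^2(\Omega))\cap L^2(0,T;H^s_0(\Omega))$ (here the viscosity $\nu>0$ is essential, as the authors stress), and from Proposition \ref{prop:estimate_for_X} the acceleration $\ddot u^\varepsilon$ is bounded in $L^1(0,T;H^{-k})$. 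So, along a subsequence, $u^\varepsilon \overset{*}{\rightharpoonup} u$ in $L^\infty(0,T;H^s_0(\Omega))$, $\dot u^\varepsilon \rightharpoonup \dot u$ in $L^2(0,T;H^s_0(\Omega))$ and $\overset{*}{\rightharpoonup}$ in $L^\infty(0,T;L^2(\Omega))$, and $\beta^\varepsilon(u^\varepsilon) \rightharpoonup \xi_t$ in $\mathcal{V}'_{s,t}$ by Proposition \ref{prop:uniform_estimate_beta}. The bound on $\ddot u^\varepsilon$ together with Aubin–Lions–Simon (using $H^s_0(\Omega) \hookrightarrow\hookrightarrow L^2(\Omega) \hookrightarrow H^{-k}$) gives $u^\varepsilon \to u$ in $C([0,T];L^2(\Omega))$ and $\dot u^\varepsilon \to \dot u$ in $L^2(0,T;L^2(\Omega))$; in particular $\dot u^\varepsilon(t) \rightharpoonup \dot u(t)$ in $L^2(\Omega)$ for each $t$, so the boundary term $\int_\Omega \dot u^\varepsilon(t)\varphi(t)\,dx$ passes to the limit. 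The initial conditions $u(0)=w_0$, $\dot u(0)=w_1$ survive because of this strong/continuous convergence.

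Next I would pass to the limit term by term in \eqref{eq:regularized_problem}: the terms $\int_0^t\!\int_\Omega \dot u^\varepsilon\dot\varphi$, $\int_0^t\!\int_{\R^d}A D^su^\varepsilon\cdot D^s\varphi$, $\int_0^t\!\int_{\R^d}\nu BD^s\dot u^\varepsilon\cdot D^s\varphi$ and $\int_0^t\!\int_\Omega g\varphi$ all converge by the weak convergences above (linear functionals of the weakly convergent quantities), and the penalty term converges to $\langle \xi_t,\varphi\rangle_{\mathcal{V}'_{s,t}\times\mathcal{V}_{s,t}}$ by definition of the weak limit $\xi_t$. This yields \eqref{eq:limit_problem_s_fixed} for every $\varphi\in\mathcal{V}_{s,t}$ and every $t\in(0,T]$, and from the regularity of the limits one reads off $u\in L^\infty(0,T;H^s_0(\Omega))\cap H^1(0,T;L^2(\Omega))$.

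I expect the main obstacle to be of a bookkeeping nature rather than a deep one here: since this proposition only asserts existence of \emph{some} distribution $\xi_t\in\mathcal{V}'_{s,t}$ satisfying the identity, one does \emph{not} yet need to identify $\xi_t\in\beta_{s,t}(u)$ — that identification (via the monotonicity/$\limsup$ argument of the Lemma after the definition of $\beta_{s,t}$, using $\limsup\langle\beta^\varepsilon(u^\varepsilon),u^\varepsilon\rangle\le\langle\xi_t,u\rangle$, which in turn requires controlling the elliptic and viscous bilinear forms along the sequence exactly as in the proof of Theorem \ref{thm:existence_approximated_solutions}) is presumably carried out in the subsequent existence theorem. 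The one point requiring a little care is that the estimates and hence the extracted subsequence must be shown to be compatible for all $t\in(0,T]$ simultaneously; this follows by taking a single diagonal subsequence valid on a countable dense set of times and using the $C([0,T];L^2(\Omega))$ convergence of $u^\varepsilon$ and the $L^\infty(0,T;L^2(\Omega))$ bound on $\dot u^\varepsilon$ to handle the endpoint term $\int_\Omega\dot u(t)\varphi(t)\,dx$ for every $t$. I would also note, as in the remark following the definition of weak solution, that for $s=1$ all $\R^d$-integrals reduce to integrals over $\Omega$, so no separate argument is needed in the classical case.
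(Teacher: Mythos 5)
Your proposal follows essentially the same route as the paper: extract weak/weak-$*$ limits from the uniform estimates of Propositions \ref{prop:estimates_for_V}, \ref{prop:uniform_estimate_beta} and \ref{prop:estimate_for_X}, use Aubin--Lions for the strong convergence of $\dot u^\varepsilon$ in $L^2(0,T;L^2(\Omega))$, pass to the limit term by term, and correctly defer the identification $\xi_t\in\beta_{s,t}(u)$ to the subsequent theorem. The one step where your justification as written does not suffice is the claim that $\dot u^\varepsilon(t)\rightharpoonup\dot u(t)$ in $L^2(\Omega)$ \emph{for every} $t$: strong convergence in $L^2(0,T;L^2(\Omega))$ only gives a.e.\ convergence along a further subsequence, and a diagonal argument over a countable dense set of times combined with the $L^\infty(0,T;L^2(\Omega))$ bound still leaves open both the convergence at the remaining times and the identification of the limit there. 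What closes this is precisely the $L^1(0,T;H^{-k})$ bound on $\ddot u^\varepsilon$, i.e.\ the uniform $BV(0,T;H^{-k}(\Omega))$ bound on $\dot u^\varepsilon$, to which the paper applies the generalized Helly selection lemma (Lemma \ref{lemma:generalized_helly_selection}) to obtain $\dot u^\varepsilon(t)\overset{\ast}{\rightharpoonup}\dot u(t)$ in $H^{-k}(\Omega)$ for every $t$, and then upgrades this to weak $L^2(\Omega)$ convergence using the uniform $L^2$ bound. You have this estimate in hand (you cite Proposition \ref{prop:estimate_for_X}), so the gap is one of invocation rather than of missing ingredients, but the Helly-type equicontinuity-in-time argument should be made explicit.
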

\begin{proof}
    Making use of the estimates obtained in Propositions \ref{prop:estimates_for_V} and \ref{prop:uniform_estimate_beta}, as well of Poincaré's inequality for $D^s$ described in Lemma \ref{lemma:poincares_inequality}, we are able to deduce that there exist functions $u$ and $v$, and a distribution $\xi\in\mathcal{V}'_s$ such that
    \begin{align}
        &u^\varepsilon\rightharpoonup u \quad \mbox{ in } H^1(0, T; H^s_0(\Omega)) \label{eq:strong_convergence_u_varepsilon_in_H1_Hs0}\\
        &u^\varepsilon\overset{\ast}{\rightharpoonup} u \quad \mbox{ in } W^{1,\infty}(0, T; L^2(\Omega))\\
        &u^\varepsilon(t)\rightharpoonup u(t) \quad \mbox{ in } H^s_0(\Omega) \mbox{ for all } t\in[0,T],
        \label{eq:weak_convergence_u_varepsilon_in_Hs0}\\
        &u^\varepsilon(t)\to u(t) \quad \mbox{ in } L^2(\Omega) \mbox{ for all } t\in[0,T],
        \label{eq:strong_convergence_u_varespilon_t_in_L2}\\
        &\dot{u}^\varepsilon(t)\rightharpoonup v(t) \quad \mbox{ in } L^2(\Omega) \mbox{ for all } t\in[0,T], \label{eq:weak_convergence_dot_u_varepsilon_in_L2}\\
        &\beta^\varepsilon(u^\varepsilon)\rightharpoonup \xi_t \quad \mbox{ in } \mathcal{V}'_{s,t},
    \end{align}
    up to a subsequence. Moreover, we know that $\dot{u}^\varepsilon\in L^2(0,T; H^s_0(\Omega))\cap W^{1,1}(0,T;H^{-k}(\Omega))$. This allow us to apply Aubin-Lions lemma and deduce that
    \begin{equation}\label{eq:strong_convergence_aubin_lions_u_varepsilon}
        \dot{u}^\varepsilon\to\dot{u}_s \mbox{ in } L^2(0,T;L^2(\Omega)).
    \end{equation}
    At the same time, since $\dot{u}^\varepsilon\in W^{1,1}(0,T;H^{-k}(\Omega))\subset BV(0,T; H^{-k}(\Omega))$ uniformly, then by the generalized Helly's selection lemma, Lemma \eqref{lemma:generalized_helly_selection}, we can say that 
    \begin{equation}
        \dot{u}^\varepsilon(t)\overset{\ast}{\rightharpoonup} \dot{u}(t) \mbox{ in } H^{-k}(\Omega) \mbox{ for all } t\in[0,T].
    \end{equation}
    This last limit in $H^{-k}(\Omega)$ can then be used to identify $v(t)$ with $\dot{u}(t)$ in $L^2(\Omega)$ for all $t\in [0,T]$.

    All the above convergence results allow us to pass to the limit in \eqref{eq:regularized_problem} and obtain \eqref{eq:limit_problem_s_fixed}.
\end{proof}

\begin{theorem}\label{thm:existence_weak_solutions}
    Let $s\in(0,1]$, $\nu>0$. Then the pair $(u,\xi_T)$ with $u\in L^\infty(0,T; H^s_0(\Omega))\cap H^1(0,T; L^2(\Omega))$ and $\xi_t\in\mathcal{V}'_{s,t}$ obtained in the previous proposition is a weak solution of \eqref{eq:problem}.
\end{theorem}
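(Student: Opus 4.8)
The plan is to verify that the limit pair $(u,\xi_T)$ constructed in Proposition~\ref{prop:limit_problem_when_varepsilon_to_0} satisfies the three items of Definition~\ref{def:weak_solution}. The identity \eqref{eq:definition_weak_solution} of item~(c) is exactly \eqref{eq:limit_problem_s_fixed}, and item~(b), i.e. \eqref{eq:definition_weak_solution_for_T}, is obtained from it by taking $t=T$, since $\mathcal{V}_s=\mathcal{V}_{s,T}$ and the corresponding dual pairings coincide. For the regularity in item~(a), $u\in H^1(0,T;H^s_0(\Omega))$ follows from \eqref{eq:strong_convergence_u_varepsilon_in_H1_Hs0}, while $u\in W^{1,\infty}(0,T;L^2(\Omega))$ follows from the $\varepsilon$--uniform bound $\|\dot u^\varepsilon\|_{L^\infty(0,T;L^2(\Omega))}\le C$ of Proposition~\ref{prop:estimates_for_V} and weak-$*$ lower semicontinuity; the initial conditions $u(0)=w_0$, $\dot u(0)=w_1$ are already part of Proposition~\ref{prop:limit_problem_when_varepsilon_to_0}. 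Thus everything reduces to two identifications: that $\xi_t\in\beta_{s,t}(u)$ for each $t\in(0,T]$ (which for $t=T$ gives $\xi_T\in\beta_s(u)$), and that $\xi_t$ is the restriction of $\xi_T$ in the sense of \eqref{eq:restriction_distribution}.

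For the subdifferential identification I would invoke the stability lemma for maximal monotone operators stated just after the definition of $\beta_{s,t}$, applied with $\beta_n:=\beta^{\varepsilon_n}$ (which converges to $\beta_{s,t}$ in the graph sense, a standard property of Moreau--Yosida regularizations), $v_n:=u^{\varepsilon_n}\rightharpoonup u$ in $\mathcal{V}_{s,t}$ (by \eqref{eq:strong_convergence_u_varepsilon_in_H1_Hs0}), and $\xi_n:=\beta^{\varepsilon_n}(u^{\varepsilon_n})\rightharpoonup\xi_t$ in $\mathcal{V}'_{s,t}$. The only substantive hypothesis to check is
\begin{equation*}
    \limsup_{\varepsilon\to 0}\langle\beta^\varepsilon(u^\varepsilon),u^\varepsilon\rangle_{\mathcal{V}'_{s,t}\times\mathcal{V}_{s,t}}\le\langle\xi_t,u\rangle_{\mathcal{V}'_{s,t}\times\mathcal{V}_{s,t}}.
\end{equation*}
Since $\beta^\varepsilon(u^\varepsilon)\in L^2(0,t;L^2(\Omega))$, this pairing equals $\iint_{Q_t}\beta^\varepsilon(u^\varepsilon)u^\varepsilon$, and testing \eqref{eq:regularized_problem} with the admissible function $\varphi=u^\varepsilon$ on $(0,t)$ expresses it as an explicit combination of a boundary term $-\int_\Omega\dot u^\varepsilon(t)u^\varepsilon(t)$, the constant $\int_\Omega w_1w_0$, the kinetic term $\int_0^t\!\int_\Omega|\dot u^\varepsilon|^2$, the source term $\iint_{Q_t}gu^\varepsilon$, and the bilinear terms $-\iint_{Q_t}AD^su^\varepsilon\cdot D^su^\varepsilon-\nu\iint_{Q_t}BD^s\dot u^\varepsilon\cdot D^su^\varepsilon$.

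Passing to the $\limsup$ in $\varepsilon$: the kinetic term converges by the strong convergence \eqref{eq:strong_convergence_aubin_lions_u_varepsilon}; the boundary term and the source term converge by a weak--strong argument using \eqref{eq:weak_convergence_dot_u_varepsilon_in_L2}, \eqref{eq:strong_convergence_u_varespilon_t_in_L2}; the elastic term satisfies $\liminf_\varepsilon\iint_{Q_t}AD^su^\varepsilon\cdot D^su^\varepsilon\ge\iint_{Q_t}AD^su\cdot D^su$ by weak lower semicontinuity of the convex quadratic functional associated with the (positive definite) symmetric part of $A$. The delicate term is the viscous cross term: $D^s\dot u^\varepsilon$ and $D^su^\varepsilon$ both converge only weakly in $L^2$, so no direct passage to the limit is available; instead, using that $B$ is symmetric and independent of time, I would rewrite $\nu\iint_{Q_t}BD^s\dot u^\varepsilon\cdot D^su^\varepsilon=\tfrac{\nu}{2}\int_{\R^d}BD^su^\varepsilon(t)\cdot D^su^\varepsilon(t)-\tfrac{\nu}{2}\int_{\R^d}BD^sw_0\cdot D^sw_0$ and bound $\liminf_\varepsilon\int_{\R^d}BD^su^\varepsilon(t)\cdot D^su^\varepsilon(t)$ from below by $\int_{\R^d}BD^su(t)\cdot D^su(t)$ via weak lower semicontinuity, using $u^\varepsilon(t)\rightharpoonup u(t)$ in $H^s_0(\Omega)$ from \eqref{eq:weak_convergence_u_varepsilon_in_Hs0}. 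This step, exploiting the symmetry of $B$ to trade a product of two weakly convergent sequences for a single weakly lower semicontinuous quadratic form, is the main obstacle. Collecting these limits gives an upper bound for $\limsup_\varepsilon\langle\beta^\varepsilon(u^\varepsilon),u^\varepsilon\rangle$; on the other hand, $u$ is an admissible test function in \eqref{eq:limit_problem_s_fixed} (as $u\in H^1(0,T;H^s_0(\Omega))\subset\mathcal{V}_{s,t}$), and testing \eqref{eq:limit_problem_s_fixed} with $\varphi=u$, using $u(0)=w_0$ and the same identity $\nu\iint_{Q_t}BD^s\dot u\cdot D^su=\tfrac{\nu}{2}\int_{\R^d}BD^su(t)\cdot D^su(t)-\tfrac{\nu}{2}\int_{\R^d}BD^sw_0\cdot D^sw_0$, shows that this upper bound is precisely $\langle\xi_t,u\rangle_{\mathcal{V}'_{s,t}\times\mathcal{V}_{s,t}}$. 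The stability lemma then yields $\xi_t\in\beta_{s,t}(u)$, and for $t=T$, $\xi_T\in\beta_s(u)$.

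Finally, to complete item~(c) I would check the compatibility \eqref{eq:restriction_distribution}: for $\varphi\in\overline{\mathcal{V}}_{s,t}$ with zero extension $\tilde\varphi\in\mathcal{V}_s$, one has $\iint_{Q_t}\beta^\varepsilon(u^\varepsilon)\varphi=\iint_{Q_T}\beta^\varepsilon(u^\varepsilon)\tilde\varphi$ because $\beta^\varepsilon(u^\varepsilon)\in L^2(Q_T)$; letting $\varepsilon\to0$ and using $\beta^\varepsilon(u^\varepsilon)\rightharpoonup\xi_t$ in $\mathcal{V}'_{s,t}$ and $\beta^\varepsilon(u^\varepsilon)\rightharpoonup\xi_T$ in $\mathcal{V}'_s$ gives $\langle\xi_t,\varphi\rangle_{\mathcal{V}'_{s,t}\times\mathcal{V}_{s,t}}=\langle\xi_T,\tilde\varphi\rangle_{\mathcal{V}'_s\times\mathcal{V}_s}$, which is exactly \eqref{eq:restriction_distribution}. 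Together with \eqref{eq:limit_problem_s_fixed} this establishes all three items of Definition~\ref{def:weak_solution}, and hence $(u,\xi_T)$ is a weak solution of \eqref{eq:problem}.
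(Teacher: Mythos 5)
Your proposal is correct and follows essentially the same route as the paper: reduce everything to the $\limsup$ inequality for $\langle\beta^\varepsilon(u^\varepsilon),u^\varepsilon\rangle$, test the penalized equation with $\varphi=u^\varepsilon$, exploit the symmetry of $B$ to rewrite the viscous cross term as a difference of quadratic forms amenable to weak lower semicontinuity (together with the symmetric part of $A$ for the elastic term), identify the resulting bound with $\langle\xi_t,u\rangle$ via the limit identity tested with $\varphi=u$, and obtain the restriction property by testing with functions vanishing at time $t$. Your write-up is in fact somewhat more explicit than the paper's about the verification of items (a) and (b) and about the invocation of the graph-convergence stability lemma, but the substance is identical.
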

\begin{proof}
    Thanks to Proposition \ref{prop:limit_problem_when_varepsilon_to_0}, we only need to check that  $\xi_t\in\beta_{s,t}(u)$. To do that, we start by observing that from the limits that were obtained in the proof of Proposition \ref{prop:limit_problem_when_varepsilon_to_0} and from the definition of $\beta_s$, we only need to prove that
    \begin{equation}
        \limsup_{\varepsilon\to 0}{\langle \beta^\varepsilon(u^\varepsilon), u^\varepsilon\rangle_{\mathcal{V}'_{s,t}\times\mathcal{V}_{s,t}}}\leq \langle \xi_s, u\rangle_{\mathcal{V}'_{s,t}\times\mathcal{V}_{s,t}}.
    \end{equation}
    But if we test \eqref{eq:regularized_problem} with $\varphi=u^\varepsilon$, we get
    \begin{equation}\label{eq:inequality_for_limsup}
        \begin{aligned}
            \limsup_{\varepsilon\to 0}{\langle \beta^\varepsilon(u^\varepsilon), u^\varepsilon\rangle_{\mathcal{V}'_{s,t}\times\mathcal{V}_{s,t}}}
            &\leq \lim_{\varepsilon\to 0}{\|\dot{u}^\varepsilon\|^2_{L^2(0,t; L^2(\Omega))}}
            -\lim_{\varepsilon\to 0}{\int_\Omega{\dot{u}^\varepsilon(t)u^\varepsilon(t)}\,dx}
            +\int_\Omega{w_1 w_0}\,dx
            \\
            &+\frac{1}{2}\int_{\R^d}{\nu B D^sw_0\cdot D^s w_0}\,dx
            -\frac{1}{2}\liminf_{\varepsilon\to 0}{\int_{\R^d}{\nu B D^su^\varepsilon(t)\cdot D^s u^\varepsilon(t)}\,dx}
            \\
            &-\liminf_{\varepsilon\to 0}{\int_0^t{\int_\Omega{A_{\mathrm{sym}}D^s u^\varepsilon\cdot D^s u^\varepsilon}\,dx}\,d\tau}
            +\lim_{\varepsilon\to 0}{\int_0^t{\int_\Omega{g u^\varepsilon}\,dx}\,d\tau}\\
            &= \langle \xi_t, u\rangle_{\mathcal{V}'_{s,t}\times\mathcal{V}_{s,t}}
        \end{aligned}
    \end{equation}
    thanks to $B$ being symmetric, \eqref{eq:limit_problem_s_fixed}, \eqref{eq:strong_convergence_aubin_lions_u_varepsilon}, \eqref{eq:strong_convergence_u_varespilon_t_in_L2} and  \eqref{eq:weak_convergence_dot_u_varepsilon_in_L2} with $v(t)$ identified with $\dot{u}(t)$. The fact that $\xi_t$ corresponding to the restriction of the distribution $\xi_T$ in the sense of \eqref{eq:restriction_distribution} follows easily by testing \eqref{eq:limit_problem_s_fixed} with functions $\varphi\in\overline{\mathcal{V}}_{s,t}$.
\end{proof}

\begin{remark}[Energy inequality]
    Since $J^\varepsilon = \int_\Omega{j^\varepsilon(v)}\,dx$ converges to $J=\int_\Omega{j(v)}\,dx$ in the sense of Mosco in $L^2(\Omega)$ \cite[Thm.~3.20]{attouch1984Variational}, then
    \begin{equation}\label{eq:energy_inequality_for_s}
        J(u(t))=\int_\Omega{j(u(t))}\,dx\leq \liminf_{\varepsilon\to 0}{\int_\Omega{j^\varepsilon(u^\varepsilon(t))}\,dx}=\liminf_{\varepsilon\to 0}{J^\varepsilon(u^\varepsilon(t))}.
    \end{equation}
    Then, by applying the $\liminf$ as $\varepsilon\to 0$ to \eqref{eq:energy_equality_varepsilon}, and using the limits \eqref{eq:strong_convergence_u_varepsilon_in_H1_Hs0}, \eqref{eq:weak_convergence_u_varepsilon_in_Hs0} and \eqref{eq:strong_convergence_u_varespilon_t_in_L2}, we deduce that
    \begin{equation}
        \mathcal{E}(u_s(t),\dot{u}_s(t))+\int_0^t{\int_{\R^d}{\nu BD\dot{u}_s\cdot\dot{u}_s}\,dx}\,d\tau\leq \mathcal{E}(w_0,w_1)+\int_0^t{\int_\Omega{g\dot{u}_s(t)}\,dx}\,d\tau.
    \end{equation}
    where $\mathcal{E}$ is defined by
    \begin{equation}
        \mathcal{E}(v,w)=\frac{1}{2}\left(\|w\|^2_{L^2(\Omega)}+\int_{\R^d}{A D^s v\cdot D^s v}\,dx\right)+\int_\Omega{j(v)}\,dx.
    \end{equation}
\end{remark}

\begin{lemma}\label{lemma:convergence_F_varepsilon_to_F_s}
    Let $s\in(0,1]$, $\nu>0$ and $0\leq \psi \in H^s_0(\Omega)$. If the function $u$ is the weak solution to \eqref{eq:problem} obtained in Theorem \ref{thm:existence_weak_solutions} for $\beta=\partial I_{[0,+\infty)}$, then the function $F:[0,T]\to \R$ defined as
    \begin{equation}
        F(t)=\int_\Omega{\dot{u}(t)\varphi}\,dx,
    \end{equation}
    is in $BV(0,T)$, in particular $\mathrm{Var}_0^T(F)\leq C$ with $C$ independent of $\nu$ and $s$, and there exists a function $\dot{u}_s(0^+)\in L^2(\Omega)\cap H^{-k}(\Omega)$ such that
    $\dot{u}_s(t)\to \dot{u}_s(0^+)$ in $H^{-k}(\Omega)$ as $t\searrow 0$ and 
    \begin{equation}
        \lim_{t\searrow 0}F(t)=\int_{\Omega}{\dot{u}_s(0^+)\varphi}\,dx.
    \end{equation}
\end{lemma}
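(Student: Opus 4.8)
The plan is to transfer the uniform $BV(0,T)$ bound on the penalized functionals $F^\varepsilon(t)=\int_\Omega \dot u^\varepsilon(t)\varphi\,dx$, established in the Proposition immediately preceding the statement, to the limit functional $F(t)=\int_\Omega \dot u(t)\varphi\,dx$, and then to invoke the abstract machinery of Lemma \ref{lemma:limit_bv_functions_and_right_limit}. First I would record the convergences coming from the proof of Proposition \ref{prop:limit_problem_when_varepsilon_to_0}: $\dot u^\varepsilon(t)\rightharpoonup \dot u(t)$ in $L^2(\Omega)$ for every $t\in[0,T]$ (with $v(t)$ already identified with $\dot u(t)$), the uniform bound $\|\dot u^\varepsilon(t)\|_{L^2(\Omega)}\le C$ for all $t$ and all $\varepsilon$ coming from Proposition \ref{prop:estimates_for_V}, and $\dot u^\varepsilon\overset{\ast}{\rightharpoonup}\dot u$ in $L^\infty(0,T;L^2(\Omega))$. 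In particular $F^\varepsilon(t)\to F(t)$ pointwise in $[0,T]$ by the weak $L^2$ convergence, and since $\mathrm{Var}_0^T(F^\varepsilon)\le C$ uniformly in $\varepsilon$ (and in $\nu$ and $s$), lower semicontinuity of the total variation under pointwise convergence gives $\mathrm{Var}_0^T(F)\le C$; hence $F\in BV(0,T)$ with the claimed $s$- and $\nu$-independent bound.

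Next I would handle the right-limit statement. The natural route is to apply Lemma \ref{lemma:limit_bv_functions_and_right_limit} with $X=H^{-k}(\Omega)$ (the space from Proposition \ref{prop:estimate_for_X}, which is separable, reflexive, with separable dual, and contains $L^2(\Omega)$ with dense, continuous embedding), and with the sequence $v_j=\dot u^{\varepsilon_j}$ along the subsequence $\varepsilon_j\searrow 0$ used in Proposition \ref{prop:limit_problem_when_varepsilon_to_0}. The hypotheses of that lemma are exactly what we have assembled: $\|\dot u^{\varepsilon_j}(t)\|_{L^2(\Omega)}\le C$ for all $t$, $\dot u^{\varepsilon_j}\overset{\ast}{\rightharpoonup}\dot u$ in $L^\infty(0,T;L^2(\Omega))$, the scalar functions $F^{\varepsilon_j}$ are uniformly $BV(0,T)$, and — crucially — $\dot u^{\varepsilon_j}$ is uniformly bounded in $BV(0,T;H^{-k}(\Omega))$, which follows from $\dot u^\varepsilon\in W^{1,1}(0,T;H^{-k}(\Omega))\subset BV(0,T;H^{-k}(\Omega))$ with the bound on $\ddot u^\varepsilon$ from Proposition \ref{prop:estimate_for_X}. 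The conclusion of Lemma \ref{lemma:limit_bv_functions_and_right_limit} then yields that $\dot u$ has a right limit $\dot u_s(0^+)$ in the sense of $BV(0,T;H^{-k}(\Omega))$, that $\dot u_s(0^+)\in L^2(\Omega)$ (because the uniform $L^2$ bound passes to the liminf, $\|\dot u_s(0^+)\|_{L^2(\Omega)}\le C$), that $\dot u_s(t)\rightharpoonup \dot u_s(0^+)$ in $L^2(\Omega)$ — and in particular $\dot u_s(t)\to\dot u_s(0^+)$ in $H^{-k}(\Omega)$ by the compact embedding, although weak $L^2$ convergence is really all that is used — as $t\searrow 0$, and finally $\lim_{t\searrow 0}F(t)=\int_\Omega \dot u_s(0^+)\varphi\,dx$.

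The uniformity of the constant $C$ in $\nu$ and $s$ is inherited at each stage: the $BV(0,T)$ bound on $F^\varepsilon$ is $\varepsilon,\nu,s$-independent by the preceding Proposition; lower semicontinuity preserves it for $F$; and the a priori estimates feeding the $H^{-k}$-$BV$ bound (Propositions \ref{prop:estimates_for_V}, \ref{prop:estimate_for_X}) were likewise stated with constants independent of $\nu$, and, under $s>\sigma>0$ and $w_0\in H^1_0(\Omega)$ by Remark \ref{rem:independence_of_s}, independent of $s$ as well.

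I expect the only genuine subtlety — rather than a true obstacle — to be the bookkeeping of which space the right limit is first constructed in: Lemma \ref{lemma:limit_bv_functions_and_right_limit} produces $\dot u_s(0^+)$ as an element of the large space $X=H^{-k}(\Omega)$ and then upgrades it to $L^2(\Omega)$ via the uniform $L^2$ bound on the sequence, so one must be careful to state the convergence $\dot u_s(t)\to\dot u_s(0^+)$ in $H^{-k}(\Omega)$ (which is what the $BV$-in-$X$ structure gives directly) while also recording that the same limit holds weakly in $L^2(\Omega)$, and that the two notions are consistent because $L^2(\Omega)\hookrightarrow H^{-k}(\Omega)$ continuously. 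Everything else is a direct citation of the lemmas already proved above, so the argument is short.
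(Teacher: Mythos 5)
Your proposal is correct and follows essentially the same route as the paper, which likewise reduces everything to Lemma \ref{lemma:limit_bv_functions_and_right_limit} applied to $v_j=\dot u^{\varepsilon_j}$ with the uniform $BV(0,T)$ bound on $F^\varepsilon$, the uniform bound $\|\dot u^\varepsilon(t)\|_{L^2(\Omega)}\le C$, and the weak-$\ast$ convergence $\dot u^\varepsilon\overset{\ast}{\rightharpoonup}\dot u$ in $L^\infty(0,T;L^2(\Omega))$. You are in fact slightly more explicit than the paper in identifying $X=H^{-k}(\Omega)$ and in deriving the quantitative bound $\mathrm{Var}_0^T(F)\le C$ via lower semicontinuity of the variation under the pointwise convergence $F^\varepsilon(t)\to F(t)$, but these are details the paper leaves implicit rather than a different argument.
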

\begin{proof}
    This is a simple application of Lemma \ref{lemma:limit_bv_functions_and_right_limit}, which is possible due to the fact that $F^\varepsilon$ is in $BV(0,T)$ uniformly with respect to $\varepsilon$, $\|\dot{u}^\varepsilon(t)\|_{L^2(\Omega)}\leq C$ uniformly in $\varepsilon$ and $\dot{u}^\varepsilon\overset{\ast}{\rightharpoonup} \dot{u}_s$ in $L^\infty(0,T;L^2(\Omega))$.
\end{proof}

\begin{proposition}\label{prop:weak_is_very_weak}
    Let $s\in(0,1]$ and $\nu>0$. If a function $u$ is a weak solution to \eqref{eq:problem} obtained in Theorem \ref{thm:existence_weak_solutions} for $\beta=\partial I_{[0,+\infty)}$, then $u$ is also a very weak solution of the same problem.
\end{proposition}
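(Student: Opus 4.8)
The plan is to verify the requirements of Definition~\ref{def:vey_weak_solution} in turn: the structural ones are read off from the construction of the previous sections, and the two inequalities are obtained by inserting well-chosen competitors into the variational inequality~\eqref{eq:variational_inequality}, which by density holds for every $\psi\in\Tilde{K}^s$. For the structural part I would collect that $u\in H^1(0,T;H^s_0(\Omega))$ (Theorem~\ref{thm:existence_weak_solutions}), hence $u\in C([0,T];H^s_0(\Omega))$ with $u(0)=w_0$; that $\ddot u^\varepsilon$ is bounded in $L^1(0,T;H^{-k}(\Omega))$ (Proposition~\ref{prop:estimate_for_X}), so $\dot u^\varepsilon$ is bounded in $W^{1,1}(0,T;H^{-k}(\Omega))\subset BV(0,T;H^{-k}(\Omega))$ and, in the limit (Lemma~\ref{lemma:generalized_helly_selection}), $\dot u\in BV(0,T;X)$ with $X=H^{-k}(\Omega)$; that $\dot u(0^+)\in L^2(\Omega)\cap H^{-k}(\Omega)$ with $\dot u(\tau)\to\dot u(0^+)$ in $H^{-k}(\Omega)$ as $\tau\searrow0$ (Lemma~\ref{lemma:convergence_F_varepsilon_to_F_s}); and that $u\ge0$ a.e.\ in $Q_T$, since $\xi\in\beta_s(u)$ with $\beta=\partial I_{[0,+\infty)}$ forces $u\in K^s$.

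For the variational inequality of the very weak formulation, given $\varphi\in\mathcal V_s$ with $\varphi\ge0$ and $\mathrm{supp}(\varphi)\subset[0,T)$, I would insert $\psi:=u+\varphi$ into \eqref{eq:variational_inequality}. Since $u\ge0$ and $\varphi\ge0$ one has $\psi\ge0$ a.e.\ and $\psi\in\mathcal V_s$, so $\psi\in\Tilde{K}^s$ is admissible; using $\psi-u=\varphi$, $\psi(T)-u(T)=\varphi(T)=0$ and $\psi(0)-w_0=\varphi(0)$, the inequality \eqref{eq:variational_inequality} turns, after moving the two data terms to the right-hand side, into exactly the inequality of Definition~\ref{def:vey_weak_solution} --- this is the ``replace $\psi-u$ by $\varphi$'' step mentioned just after that definition.

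For the initial condition I would fix $\psi\in H^s_0(\Omega)$ with $\psi\ge0$ and $\delta\in(0,T)$, and insert into \eqref{eq:variational_inequality} the competitor $\psi_\delta:=u+\varphi_\delta$ with $\varphi_\delta(\tau,\cdot)=\big(1-\tfrac\tau\delta\big)_+\big(\psi-u(\tau,\cdot)\big)$; on $[0,\delta]$ this is $\psi_\delta(\tau)=\tfrac\tau\delta\,u(\tau)+\big(1-\tfrac\tau\delta\big)\psi$, a convex combination of nonnegative functions, so $\psi_\delta\in\Tilde{K}^s$, and $\psi_\delta=u$ for $\tau\ge\delta$. Then $\psi_\delta(T)=u(T)$ kills the terminal term, $\psi_\delta(0)-w_0=\psi-w_0$, and on $(0,\delta)$ one has $\dot\varphi_\delta=-\tfrac1\delta(\psi-u)-\big(1-\tfrac\tau\delta\big)\dot u$, so that
\[
  -\int_0^\delta\!\!\int_\Omega\dot u\,\dot\varphi_\delta\,dx\,d\tau
  =\frac1\delta\int_0^\delta\!\!\int_\Omega\dot u\,(\psi-u)\,dx\,d\tau
  +\int_0^\delta\!\!\int_\Omega\Big(1-\tfrac\tau\delta\Big)|\dot u|^2\,dx\,d\tau .
\]
Letting $\delta\searrow0$, the elastic, viscous and source integrals over $[0,\delta]$ vanish (bounded integrands on a shrinking interval, using $u\in L^\infty(0,T;H^s_0(\Omega))$, $D^s\dot u\in L^2$ and Cauchy--Schwarz), the last nonnegative term vanishes because $\dot u\in L^\infty(0,T;L^2(\Omega))$, and the average of the remaining term converges,
\[
  \frac1\delta\int_0^\delta\!\!\int_\Omega\dot u(\tau)\,(\psi-u(\tau))\,dx\,d\tau\ \longrightarrow\ \int_\Omega\dot u(0^+)\,(\psi-w_0)\,dx ,
\]
because $u(\tau)\to w_0$ strongly in $L^2(\Omega)$ (from $u\in C([0,T];H^s_0(\Omega))$) while $\dot u(\tau)\rightharpoonup\dot u(0^+)$ weakly in $L^2(\Omega)$. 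Passing to the limit in \eqref{eq:variational_inequality} then yields $\int_\Omega(\dot u(0^+)-w_1)(\psi-w_0)\,dx\ge0$, which with $u(0)=w_0$ is \eqref{eq:initial_conditions_very_weak_solutions}.

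The hard part will be this last limit: upgrading $\dot u(\tau)\to\dot u(0^+)$ from $H^{-k}(\Omega)$ to weak $L^2(\Omega)$-convergence --- which uses the uniform bound $\|\dot u(\tau)\|_{L^2(\Omega)}\le C$ from Proposition~\ref{prop:estimates_for_V} together with $\dot u(0^+)\in L^2(\Omega)$ --- and then combining it with the strong $L^2$-convergence of $u(\tau)$ so that the product of the weakly and strongly convergent factors passes to the limit inside the time average. Everything else is a direct rearrangement of the already-established variational inequality, so I expect no further difficulty.
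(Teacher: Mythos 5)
Your proof is correct. The first half --- inserting $\psi=u+\varphi$ into \eqref{eq:variational_inequality} and using $\varphi(T)=0$ to obtain the very weak inequality --- is exactly the paper's argument. For the initial condition, however, you take a genuinely different (though equally valid) route. The paper works with the variational inequality on the truncated interval $(0,t)$, available through part (c) of Definition~\ref{def:weak_solution}, with the time-independent competitor $v\equiv\psi$; the boundary term $\int_\Omega\dot u(t)(\psi-u(t))\,dx$ then appears directly from the weak formulation, the remaining integrals over $(0,t)$ are estimated by $Ct+Ct^{1/2}$ (estimates \eqref{eq:estimate_nu_to_0_g}--\eqref{eq:estimate_nu_to_0_B}), and the conclusion follows by letting $t\searrow 0$ via Lemma~\ref{lemma:convergence_F_varepsilon_to_F_s}. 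You instead remain on the full interval $(0,T)$ and localize in time through the cut-off competitor $\psi_\delta$, which replaces the pointwise boundary term by the average $\tfrac{1}{\delta}\int_0^\delta\int_\Omega\dot u\,(\psi-u)\,dx\,d\tau$. Both routes terminate in the same limit and rest on the same two facts: $u(\tau)\to w_0$ strongly in $L^2(\Omega)$, and $\dot u(\tau)\rightharpoonup\dot u(0^+)$ weakly in $L^2(\Omega)$, the latter obtained from the $BV(0,T;H^{-k}(\Omega))$ bound together with the uniform $L^2$ bound on $\dot u(\tau)$, i.e.\ precisely the content of Lemma~\ref{lemma:convergence_F_varepsilon_to_F_s}. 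Your version has the mild advantage of not invoking the restricted formulation on $(0,t)$ and its distribution $\xi_t$, at the cost of the (routine) computation of $\dot\varphi_\delta$ and of verifying admissibility of $\psi_\delta$ in $\Tilde{K}^s$, which you do correctly by exhibiting it as a convex combination of nonnegative functions. The structural requirements of Definition~\ref{def:vey_weak_solution} that you collect at the outset are indeed all established in Sections~\ref{sec:apriori_estimates} and \ref{sec:existence_s_fixed}, so no gap remains.
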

\begin{proof}
    We start by observing that if we test \eqref{eq:variational_inequality} with $v=\varphi+u$ where $\varphi\in H^1(0,T; L^2(\Omega))\cap L^2(0,T; H^s_0(\Omega))$ is such that $\varphi\geq 0$ and $\mathrm{supp}(\varphi(\cdot,x))\subset [0,T)$, then we get that
    \begin{equation}\label{eq:signal_inequality_s_fixed}
            \int_0^T{\int_{\R^d}{\left(A D^s u\cdot D^s\varphi + \nu B D^s \dot{u} \cdot D^s\varphi\right)}\,dx}\,dt
            \geq \int_\Omega{w_1(\varphi(0)-w_0)}\,dx
            +\int_0^T{\int_\Omega{\dot{u} \dot{\varphi}}\,dx}\,dt
            +\int_0^T{\int_{\Omega}{g\varphi}\,dx}\,dt.
    \end{equation}
    Now we have to check that the initial conditions satisfy \eqref{eq:initial_conditions_very_weak_solutions}. For that, we now test \eqref{eq:variational_inequality} with the function $v$ that is equal to a function $\psi\in H^s_0(\Omega)$ at all time $t$, i.e.,  $v(\cdot,t)=\psi\in H^s_0(\Omega)$, and deduce the estimates
    \begin{equation}\label{eq:estimate_nu_to_0_g}
        \left|\int_0^t{\int_{\Omega}{g(\psi-u)}\,dx}\,dt\right|\leq t\|g\|_{L^2(\Omega)}\left(\|\psi\|_{L^2(\Omega)}+\|u\|_{L^\infty(0,t;L^2(\Omega))}\right)\leq Ct^{1/2},
    \end{equation}
    \begin{equation}\label{eq:estimate_nu_to_0_velocity}
        \int_0^t{\int_\Omega{|\dot{u}|^2}\,dx}\,dt\leq t\|\dot{u}\|^2_{L^\infty(0,T;L^2(\Omega))}\leq Ct,
    \end{equation}
    \begin{multline}\label{eq:estimate_nu_to_0_A}
        \int_0^t{\int_{\R^d}{AD^s u \cdot D^s(\psi-u)}\,dx}\,dt\\
        \leq ta^*\|D^s u\|_{L^\infty(0,t;L^2(\R^d;\R^d))}\left(\|D^s \psi\|_{L^2(\R^d;\R^d)}+\|D^s u\|_{L^\infty(0,t;L^2(\R^d;\R^d))}\right)\leq Ct,
    \end{multline}
    and
    \begin{multline}\label{eq:estimate_nu_to_0_B}
        \int_0^t{\int_{\R^d}{\nu BD^s \dot{u} \cdot D^s(\psi-u)}\,dx}\,dt\\
        \leq \nu b^*\|D^s \dot{u}\|_{L^2(0,T;L^2(\Omega))}\left(t^{1/2}\|D^s \psi\|_{L^\infty(0,t;L^2(\Omega))}+t^{1/2}\|D^s u\|_{L^\infty(0,T;L^2(\Omega))}\right)\leq Ct^{1/2},
    \end{multline}
    where $C$ is independent of $\nu$ and $s$. 
    Then, from the fact that $u$ is a solution to \eqref{eq:variational_inequality} we get that
    \begin{equation}\label{eq:estimate_for_initial_conditions}
        \int_\Omega{\dot{u}(t)(\psi-u(t))}\,dx-\int_\Omega{w_1(\psi-w_0)}\,dx\geq -Ct-Ct^{1/2}.
    \end{equation}
    Applying Lemma \ref{lemma:convergence_F_varepsilon_to_F_s} as $t\searrow 0$ to the previous inequality, and using the fact that $u$ is continuous in $L^2(\Omega)$, we conclude that
    \begin{equation*}
        \int_\Omega{(\dot{u}(0^+)-w_1)(\psi-w_0)}\,dx\geq 0.
    \end{equation*}
\end{proof}

\section{From viscous to inviscid membranes with zero obstacle}\label{sec:from_viscous_to_inviscid}

\begin{theorem}\label{thm:existence_of_solutions_inviscid_problem}
    Let $s\in(0,1]$. Consider a sequence of functions $\{u_\nu\}$ that correspond to the weak solutions of \eqref{eq:problem} obtained in Theorem \ref{thm:existence_weak_solutions} for $\beta=\partial I_{[0,+\infty)}$ and $\nu>0$. Then there exists a subsequence $\nu\to 0$ and a function $u\in L^\infty(0,T; H^s_0(\Omega))\cap H^1(0,T; L^2(\Omega))$ such that $u_\nu\rightharpoonup u$ in $H^1(0,T; H^s_0(\Omega))$ which is also a very weak solution of the inviscid problem with $\nu=0$ in the sense of Definition \ref{def:vey_weak_solution_inviscid}.
\end{theorem}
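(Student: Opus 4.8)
The plan is to pass to the limit $\nu \searrow 0$ in the weak formulation of Theorem~\ref{thm:existence_weak_solutions}, using the $\nu$-independence of all the a priori estimates established in Section~\ref{sec:apriori_estimates} and in Proposition~\ref{prop:weak_is_very_weak}. Specifically, from Proposition~\ref{prop:estimates_for_V} the bounds on $\|\dot u_\nu\|_{L^\infty(0,T;L^2(\Omega))}$ and $\|D^s u_\nu\|_{L^\infty(0,T;L^2(\R^d;\R^d))}$ are uniform in $\nu$, so we extract a subsequence with $u_\nu \rightharpoonup u$ in $H^1(0,T;H^s_0(\Omega))$ (using the Poincaré inequality of Lemma~\ref{lemma:poincares_inequality}), $u_\nu \overset{\ast}{\rightharpoonup} u$ in $W^{1,\infty}(0,T;L^2(\Omega))$, and, via Proposition~\ref{prop:estimate_for_X} together with Aubin--Lions, $\dot u_\nu \to \dot u$ strongly in $L^2(0,T;L^2(\Omega))$ and $\dot u_\nu(t) \overset{\ast}{\rightharpoonup} \dot u(t)$ in $H^{-k}(\Omega)$ for every $t$. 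We also note $\nu^{1/2}D^s \dot u_\nu$ is bounded in $L^2(0,T;L^2(\R^d;\R^d))$, so the viscous terms $\nu\int_0^T\!\int_{\R^d} B D^s\dot u_\nu \cdot D^s\varphi$ carry a factor $\nu^{1/2}$ and vanish as $\nu \searrow 0$.

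The second step is to start from inequality \eqref{eq:signal_inequality_s_fixed} (valid for each $\nu$ by Proposition~\ref{prop:weak_is_very_weak}), written for a fixed test function $\varphi \in \mathcal{V}_s$ with $\varphi \geq 0$ and $\mathrm{supp}(\varphi) \subset [0,T)$, and pass to the limit term by term: the term $\int_0^T\!\int_\Omega \dot u_\nu \dot\varphi$ converges by weak (in fact strong) $L^2$ convergence of $\dot u_\nu$; the term $\int_0^T\!\int_{\R^d} A D^s u_\nu \cdot D^s\varphi$ converges by weak convergence of $D^s u_\nu$ in $L^2$; the viscous term vanishes as just noted; and the right-hand side is constant in $\nu$. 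This yields the inviscid variational inequality of Definition~\ref{def:vey_weak_solution_inviscid}. The sign constraint $u \geq 0$ a.e.\ passes to the limit because the set $\{v \geq 0\}$ is closed and convex in $L^2(Q_T)$, hence weakly closed, and $u_\nu \geq 0$; the initial condition $u(0,\cdot)=w_0$ follows from $u_\nu(0)=w_0$ and continuity of $u_\nu$ into $L^2(\Omega)$ together with the convergence $u_\nu(t) \to u(t)$ in $L^2(\Omega)$ at each $t$ (a consequence of the estimate \eqref{eq:strong_convergence_u_varespilon_t_in_L2}-type argument applied uniformly in $\nu$).

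The more delicate step is recovering the initial-velocity inequality \eqref{eq:initial_conditions_very_weak_solutions_inviscid_problem} and the regularity $\dot u \in BV(0,T;X)$ with $\dot u(0^+) \in L^2(\Omega)$. For this I would use the $\nu$-uniform estimate \eqref{eq:estimate_for_initial_conditions} obtained in the proof of Proposition~\ref{prop:weak_is_very_weak}, namely $\int_\Omega \dot u_\nu(t)(\psi - u_\nu(t))\,dx - \int_\Omega w_1(\psi - w_0)\,dx \geq -Ct - Ct^{1/2}$ with $C$ independent of $\nu$, together with the $\nu$-uniform $BV(0,T)$ bound on $F^\nu(t) = \int_\Omega \dot u_\nu(t)\varphi\,dx$ furnished by the Proposition following Theorem~\ref{thm:existence_approximated_solutions} (whose constant is also $\nu$-independent). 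Passing $\nu \searrow 0$ with these bounds and invoking Lemma~\ref{lemma:limit_bv_functions_and_right_limit} — with $v_j = \dot u_\nu$, which is uniformly bounded in $L^\infty(0,T;L^2(\Omega))$ and, via Proposition~\ref{prop:estimate_for_X}, in $BV(0,T;H^{-k}(\Omega))$ — shows $F(t) = \int_\Omega \dot u(t)\varphi\,dx$ is $BV(0,T)$, that $\dot u(t) \to \dot u(0^+)$ in $H^{-k}(\Omega)$ as $t \searrow 0$ with $\dot u(0^+) \in L^2(\Omega)$, and that $\lim_{t\searrow 0} F(t) = \int_\Omega \dot u(0^+)\varphi\,dx$. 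Applying this to \eqref{eq:estimate_for_initial_conditions} (after first letting $\nu \searrow 0$, then $t \searrow 0$) and using $u(t) \to w_0$ in $L^2(\Omega)$ gives $\int_\Omega(\dot u(0^+) - w_1)(\psi - w_0)\,dx \geq 0$ for all $\psi \in H^s_0(\Omega)$, $\psi \geq 0$.

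I expect the main obstacle to be the careful bookkeeping of the order of limits in the last step: one cannot directly apply Lemma~\ref{lemma:limit_bv_functions_and_right_limit} to the $\nu$-family while simultaneously taking $t \searrow 0$, so the cleanest route is to first fix $t$, pass $\nu \searrow 0$ in \eqref{eq:estimate_for_initial_conditions} (using $\dot u_\nu \to \dot u$ strongly in $L^2(0,T;L^2(\Omega))$, hence $\dot u_\nu(t) \rightharpoonup \dot u(t)$ in $L^2(\Omega)$ for a.e.\ $t$, plus $u_\nu(t) \to u(t)$ in $L^2(\Omega)$), obtaining the inequality with the limit function $u$ and $\nu$-free constant $C$; and only then let $t \searrow 0$ via the $BV$ structure of $F$. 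A minor subtlety is that the pointwise-in-$t$ weak $L^2$ convergence $\dot u_\nu(t) \rightharpoonup \dot u(t)$ needs to be upgraded from a.e.\ $t$ to all $t \in [0,T]$, which follows exactly as in Proposition~\ref{prop:limit_problem_when_varepsilon_to_0} by combining the $\nu$-uniform $BV(0,T;H^{-k}(\Omega))$ bound, Helly's selection (Lemma~\ref{lemma:generalized_helly_selection}), and the $L^2$ bound to identify the limit.
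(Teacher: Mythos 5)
Your overall strategy coincides with the paper's: pass to the limit in the very weak inequality \eqref{eq:signal_inequality_s_fixed} using the $\nu$-uniform energy estimate of Proposition \ref{prop:estimates_for_V}, kill the viscous term via the $\sqrt{\nu}$ factor, and recover the initial-velocity condition from the $\nu$-uniform estimate \eqref{eq:estimate_for_initial_conditions} combined with the uniform $BV(0,T)$ bound on $F^\nu$ and Lemma \ref{lemma:limit_bv_functions_and_right_limit}, taking $\nu\searrow 0$ first and $t\searrow 0$ second. That is exactly the published argument, and your handling of the order of limits and of the pointwise-in-$t$ identification of $\dot u(t)$ via Helly's selection in $BV(0,T;H^{-k}(\Omega))$ matches the paper.

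There is, however, one intermediate claim in your first paragraph that is false: you assert that Proposition \ref{prop:estimate_for_X} together with Aubin--Lions gives $\dot u_\nu \to \dot u$ strongly in $L^2(0,T;L^2(\Omega))$. Aubin--Lions would require $\dot u_\nu$ to be bounded in $L^2(0,T;H^s_0(\Omega))$ uniformly in $\nu$, but the only available bound is $\nu b_*\|D^s\dot u_\nu\|^2_{L^2(0,T;L^2(\R^d;\R^d))}\leq C$, i.e. $\|D^s\dot u_\nu\|_{L^2(0,T;L^2)}\lesssim \nu^{-1/2}$, which degenerates as $\nu\searrow 0$. This is precisely the obstruction the paper singles out in Section \ref{sec:functional_setting} as the reason one cannot obtain weak (as opposed to very weak) solutions of the inviscid problem. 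Fortunately your proof does not actually need this strong convergence: the term $\int_0^T\!\int_\Omega \dot u_\nu\dot\varphi$ passes to the limit by weak convergence alone since $\dot\varphi$ is fixed, and the pointwise convergence $\dot u_\nu(t)\rightharpoonup\dot u(t)$ in $L^2(\Omega)$ for all $t$ is supplied by the Helly argument you invoke at the end (the $BV(0,T;H^{-k}(\Omega))$ bound is uniform in $\nu$). So the claim should simply be deleted, not repaired. A minor bookkeeping point: the $\nu$-uniform $BV(0,T)$ bound on $F^\nu$ for the weak solutions $u_\nu$ (rather than for the penalized approximations) is the content of Lemma \ref{lemma:convergence_F_varepsilon_to_F_s}, which is the statement you should cite there.
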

\begin{proof}
    Since the estimate \eqref{eq:uniform_estimate_for_V} is independent of $\nu$, we then have, just like in the proof of Proposition \ref{prop:limit_problem_when_varepsilon_to_0} that there exists a function $u\in L^\infty(0,T; H^s_0(\Omega))\cap H^1(0,T; L^2(\Omega))$ such that
\begin{align}
        &u_\nu\rightharpoonup u \quad \mbox{ in } H^1(0, T; L^2(\Omega))\\
        &u_\nu\overset{\ast}{\rightharpoonup} u \quad \mbox{ in } W^{1,\infty}(0, T; L^2(\Omega))\\
        &u_\nu(t)\rightharpoonup u(t) \quad \mbox{ in } H^s_0(\Omega) \mbox{ for all } t\in[0,T],\\
        &u_\nu(t)\to u(t) \quad \mbox{ in } L^2(\Omega) \mbox{ for all } t\in[0,T],\\
        &\dot{u}_\nu(t)\rightharpoonup \dot{u}(t) \quad \mbox{ in } L^2(\Omega) \mbox{ for all } t\in[0,T].
    \end{align}
    Moreover, we also have from \eqref{eq:uniform_estimate_for_V} that
    \begin{equation*}
        \nu\|D^s u_\nu\|^2\leq C
    \end{equation*}
    with $C$ independent of $\nu$, which implies that
    \begin{equation*}
        \nu\int_0^T{\int_\Omega{ B D^s u_\nu\cdot D^s \varphi}\,dx}\,dt\leq \sqrt{\nu}b^*\|D^s u_\nu\|\sqrt{\nu}\|D^s \varphi\|\leq \sqrt{\nu}C.
    \end{equation*}
    On the one hand, using the fact that $u_\nu$ is also a very weak solution, when we take $\nu\to 0$ we get
    \begin{equation}\label{eq:signal_estimate_without_nu}
            -\int_0^T{\int_\Omega{\dot{u} \dot{\varphi}}\,dx}\,dt
            +\int_0^T{\int_{\R^d}{A D^s u\cdot D^s\varphi}\,dx}\,dt
            \geq \int_\Omega{w_1(\varphi(0)-w_0)}\,dx
            +\int_0^T{\int_{\Omega}{g\varphi}\,dx}\,dt.
    \end{equation}
    On the other hand, due to the fact that the estimate is \eqref{eq:estimate_for_initial_conditions} independent of $\nu$, we have that
    \begin{equation*}
        \int_\Omega{\dot{u}(t)( \psi-u(t))}\,dx-\int_\Omega{w_1(\psi-w_0)}\,dx=\lim_{\nu\to 0}{\int_\Omega{(\dot{u}_\nu(t))(\psi-u_\nu(t))}\,dx-\int_\Omega{w_1(\psi-w_0)}\,dx}\geq -Ct-Ct^{1/2},
    \end{equation*}
    for $0\leq \psi\in H^s_0(\Omega)$. Now we just need to make $t\searrow 0$. For that consider the functions $F^\nu:[0,T]\to \R$ defined as
    \begin{equation*}
        F^\nu(t)=\int_\Omega{\dot{u}_\nu(t)\varphi}\,dx,
    \end{equation*}
    as well as the function $F:[0,T]\to \R$ defined as
    \begin{equation*}
        F(t)=\int_\Omega{\dot{u}(t)\varphi}\,dx.
    \end{equation*}
    Since $\mathrm{Var}_0^T(F^\nu)$ is uniformly bounded with respect to $\nu$, and $\|u^\nu(t)\|_{L^2(\Omega)}\leq C$ for all $t\in [0,T]$, then we can apply Lemma \ref{lemma:limit_bv_functions_and_right_limit} and deduce that $F$ is $BV(0,T)$ and that there exists a function $\dot{u}(0^+)\in L^2(\Omega)$ such that
    $\dot{u}(t)\rightharpoonup \dot{u}(0^+)$ in $L^2(\Omega)$ as $t\searrow 0$ and 
    \begin{equation*}
        \lim_{t\searrow 0}F(t)=\int_{\Omega}{\dot{u}(0^+)\varphi}\,dx.
    \end{equation*}
    This then allow us to conclude that
    \begin{equation*}
        \int_\Omega{(\dot{u}(0^+)-w_1)(\psi-w_0)}\,dx\geq 0.
    \end{equation*}
\end{proof}

\section{From fractional to classical as $s\nearrow 1$}\label{sec:existence_s_to_1}
We show the stability of weak and very weak solutions of the viscous, $\nu>0$, and inviscid, $\nu=0$, problems \eqref{eq:problem}, respectively, when the fractional parameter $s$ tends to $1$.

\subsection{Convergence of the weak solutions in the general viscous problem}

\begin{theorem}
    Let $\nu>0$, $w_0\in H^1_0(\Omega)$, $w_1\in L^2(\Omega)$, $\sigma\in(0,1)$ and consider a sequence of values $s\in (\sigma,1)$ such that $s\nearrow 1$. Consider also sequence of functions $\{u_s\}_s$ and a sequence of distributions $\{\xi_s\}$ with each pair $(u_s, \xi_s)$ being weak solutions obtained in Theorem \ref{thm:existence_weak_solutions} for each $s$. Then, we can extract a subsequence from $\{u_s\}$ and $\{\xi_s\}$ such that \begin{equation*}
        u_s\rightharpoonup u\quad \mbox{ in }\quad H^1(0,T; H^\sigma_0(\Omega)), \qquad \xi_s\rightharpoonup \xi\quad \mbox{ in }\quad \mathcal{V}'_{1}
    \end{equation*}
    with $u\in H^1(0,T; H^1_0(\Omega))$ and $\xi\in \beta_1(u)$ being a weak solution of \eqref{eq:problem} for $s=1$.
\end{theorem}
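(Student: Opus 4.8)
The plan is to mimic the passages to the limit carried out in Proposition~\ref{prop:limit_problem_when_varepsilon_to_0} and Theorem~\ref{thm:existence_of_solutions_inviscid_problem}, now letting $s$ vary. Since $w_0\in H^1_0(\Omega)$ and $s>\sigma$, Remark~\ref{rem:independence_of_s} and the remark after Proposition~\ref{prop:uniform_estimate_beta} make the bound \eqref{eq:uniform_estimate_for_V} and the estimate $\|\xi_s\|_{\mathcal{V}'_s}\le C$ independent of $s$ (they still depend on the fixed $\nu>0$ and on $\sigma$); together with Proposition~\ref{prop:estimate_for_X}, the inequality $\|D^s\varphi\|_{L^2(\R^d;\R^d)}\le C\|\varphi\|_{H^1_0(\Omega)}$ valid uniformly in $s\in(\sigma,1)$, and the induced $s$-uniform embedding $\mathcal{V}_1\hookrightarrow\mathcal{V}_s$, this yields that $\{u_s\}$ is bounded in $H^1(0,T;H^\sigma_0(\Omega))\cap W^{1,\infty}(0,T;L^2(\Omega))$, that $\{D^su_s\}$ and $\{D^s\dot u_s\}$ are bounded in $L^2(0,T;L^2(\R^d;\R^d))$, that $\{\dot u_s\}$ is bounded in $BV(0,T;H^{-k}(\Omega))$, and that $\{\xi_s\}$ is bounded in $\mathcal{V}'_1$. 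Passing to a subsequence, $u_s\rightharpoonup u$ in $H^1(0,T;H^\sigma_0(\Omega))$, $u_s\overset{\ast}{\rightharpoonup}u$ in $W^{1,\infty}(0,T;L^2(\Omega))$, $\xi_s\rightharpoonup\xi$ in $\mathcal{V}'_1$, and $D^su_s\rightharpoonup G$, $D^s\dot u_s\rightharpoonup H$ in $L^2(0,T;L^2(\R^d;\R^d))$.

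Next I would identify the limits of the fractional gradients and record the pointwise-in-time convergences. The time-dependent form of Lemma~\ref{lemma:bellido_compactness} (see \cite[Proposition~2.4.11]{campos2021Lions}) applied to $u_s$ and to $\dot u_s$ gives $u\in H^1(0,T;H^1_0(\Omega))$ with $G=Du$ and $H=D\dot u$, and $u_s\to u$ in $L^2(0,T;H^\sigma_0(\Omega))$; since $\dot u_s$ is bounded in $L^2(0,T;H^\sigma_0(\Omega))$ (from the viscosity bound in \eqref{eq:uniform_estimate_for_V}) and $\ddot u_s$ in $L^1(0,T;H^{-k}(\Omega))$, an Aubin--Lions argument gives $\dot u_s\to\dot u$ in $L^2(0,T;L^2(\Omega))$; the $L^2$-Lipschitz bound on $u_s$ together with compactness of $H^\sigma_0(\Omega)\hookrightarrow L^2(\Omega)$ gives $u_s\to u$ in $C([0,T];L^2(\Omega))$; and Lemma~\ref{lemma:generalized_helly_selection} applied to $\dot u_s\in BV(0,T;H^{-k}(\Omega))$, combined with the uniform $L^2$-bound, gives $\dot u_s(t)\rightharpoonup\dot u(t)$ in $L^2(\Omega)$ for every $t$; in particular $u(0)=w_0$ and $\dot u(0)=w_1$. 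For fixed $\varphi\in\mathcal{V}_1\subset\mathcal{V}_s$ I then pass to the limit in \eqref{eq:definition_weak_solution_for_T}: the elliptic and viscous terms converge because $D^su_s,D^s\dot u_s$ converge weakly in $L^2(0,T;L^2(\R^d;\R^d))$ while $D^s\varphi\to D\varphi$ strongly there by Lemma~\ref{lemma:strong_convergence_Ds_D_fixed_function}; the boundary term converges by $\dot u_s(T)\rightharpoonup\dot u(T)$; and $\langle\xi_s,\varphi\rangle_{\mathcal{V}'_s\times\mathcal{V}_s}=\langle\xi_s,\varphi\rangle_{\mathcal{V}'_1\times\mathcal{V}_1}\to\langle\xi,\varphi\rangle_{\mathcal{V}'_1\times\mathcal{V}_1}$. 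This gives \eqref{eq:definition_weak_solution_for_T} with $s=1$; testing with $\varphi\in\overline{\mathcal{V}}_{1,t}$ and using \eqref{eq:restriction_distribution} supplies $\xi_t\in\mathcal{V}'_{1,t}$ (the restriction of $\xi$) together with the identity \eqref{eq:definition_weak_solution} for every $t\in(0,T]$.

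The crucial step is $\xi\in\beta_1(u)$. Testing \eqref{eq:definition_weak_solution_for_T} (for each $s$) with $\varphi=u_s$ and using that $B$ is symmetric, so $\int_0^T\!\!\int_{\R^d}BD^s\dot u_s\cdot D^su_s=\tfrac12\int_{\R^d}\big(BD^su_s(T)\cdot D^su_s(T)-BD^sw_0\cdot D^sw_0\big)$, expresses $\langle\xi_s,u_s\rangle_{\mathcal{V}'_s\times\mathcal{V}_s}$ through $\int_0^T\!\!\int_\Omega gu_s$, $\int_\Omega\dot u_s(T)u_s(T)$, $\int_0^T\!\!\int_\Omega|\dot u_s|^2$, $\int_0^T\!\!\int_{\R^d}AD^su_s\cdot D^su_s$, $\int_{\R^d}BD^su_s(T)\cdot D^su_s(T)$ and $\int_{\R^d}BD^sw_0\cdot D^sw_0$. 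Taking $\limsup_{s\nearrow1}$: the first three pass to the limit by the strong convergences $u_s\to u$ in $C([0,T];L^2(\Omega))$, $\dot u_s\to\dot u$ in $L^2(0,T;L^2(\Omega))$ and the weak convergence $\dot u_s(T)\rightharpoonup\dot u(T)$; the quadratic terms in $D^su_s$ obey $\liminf\ge$ the same quadratic in $Du$ (resp.\ in $Du(T)$) by weak lower semicontinuity of the nonnegative forms $v\mapsto\int A_{\mathrm{sym}}v\cdot v$ and $v\mapsto\int Bv\cdot v$, once $D^su_s(T)\rightharpoonup Du(T)$ in $L^2(\R^d;\R^d)$ is obtained by applying Lemma~\ref{lemma:bellido_compactness} to $\{u_s(T)\}$ and identifying the limit with $u(T)$ via the $C([0,T];L^2(\Omega))$-convergence; and $\int_{\R^d}BD^sw_0\cdot D^sw_0\to\int_{\R^d}BDw_0\cdot Dw_0$ because $D^sw_0\to Dw_0$ strongly in $L^2(\R^d;\R^d)$. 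The resulting bound coincides exactly with \eqref{eq:definition_weak_solution_for_T} for $s=1$ tested with $\varphi=u$ (using the chain rule for the $B$-term once more), that is, $\limsup_{s\nearrow1}\langle\xi_s,u_s\rangle_{\mathcal{V}'_s\times\mathcal{V}_s}\le\langle\xi,u\rangle_{\mathcal{V}'_1\times\mathcal{V}_1}$. Since $\xi_s\in\beta_s(u_s)$ gives $\mathcal{J}_T(u_s)-\mathcal{J}_T(v)\le\langle\xi_s,u_s\rangle_{\mathcal{V}'_s\times\mathcal{V}_s}-\langle\xi_s,v\rangle_{\mathcal{V}'_s\times\mathcal{V}_s}$ for all $v\in\mathcal{V}_1$, while $\mathcal{J}_T(u)\le\liminf_{s\nearrow1}\mathcal{J}_T(u_s)$ (convexity and lower semicontinuity of $j$, with $u_s\to u$ in $L^2(0,T;L^2(\Omega))$) and $\langle\xi_s,v\rangle_{\mathcal{V}'_s\times\mathcal{V}_s}\to\langle\xi,v\rangle_{\mathcal{V}'_1\times\mathcal{V}_1}$, combining these three facts yields $\mathcal{J}_T(u)-\mathcal{J}_T(v)\le\langle\xi,u-v\rangle_{\mathcal{V}'_1\times\mathcal{V}_1}$ for all $v\in\mathcal{V}_1$, i.e.\ $\xi\in\beta_1(u)$; the analogous computation on $(0,t)$ gives $\xi_t\in\beta_{1,t}(u)$. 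Together with the second paragraph this shows that $(u,\xi)$ is a weak solution of \eqref{eq:problem} with $s=1$.

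The main obstacle is precisely this $\limsup$ inequality for $\langle\xi_s,u_s\rangle$: unlike in the $\varepsilon\to0$ passage, the fractional gradients themselves vary with $s$, so one must simultaneously control the weak compactness $D^su_s\rightharpoonup Du$ and its pointwise-in-time counterpart $D^su_s(T)\rightharpoonup Du(T)$ (both via Lemma~\ref{lemma:bellido_compactness}), exploit the strong convergence $D^sw_0\to Dw_0$ (where $w_0\in H^1_0(\Omega)$ enters), and use the symmetry of $B$ and the lower semicontinuity of the nonnegative quadratic energies to make the limit match the $s=1$ equation exactly rather than only up to an inequality of the wrong sign; all the other convergences are routine adaptations of Proposition~\ref{prop:limit_problem_when_varepsilon_to_0}.
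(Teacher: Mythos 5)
Your proposal is correct and follows essentially the same route as the paper: uniform-in-$s$ energy and dual estimates (using $w_0\in H^1_0(\Omega)$), compactness via Lemma \ref{lemma:bellido_compactness}, Helly and Aubin--Lions, passage to the limit in the weak formulation with Lemma \ref{lemma:strong_convergence_Ds_D_fixed_function}, and identification $\xi\in\beta_1(u)$ through lower semicontinuity of $\mathcal{J}$ plus the $\limsup$ inequality for $\langle\xi_s,u_s\rangle$. In fact you spell out that $\limsup$ step (symmetry of $B$, weak lower semicontinuity of the quadratic forms, $D^s u_s(T)\rightharpoonup Du(T)$, and $D^s w_0\to Dw_0$) in more detail than the paper, which simply refers back to the analogous computation \eqref{eq:inequality_for_limsup}.
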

\begin{remark}
    Due to the lack of uniqueness of solutions to \eqref{eq:limit_problem_s_fixed}, we cannot say that the solution $u_1$ to this problem when $s=1$ coincides with the limit $u_1$.
\end{remark}
\begin{proof}
    To construct the function $u$ and the distributions $\xi$ and $\xi_t$ with $t\in(0,T]$, satisfying
    \begin{equation}\label{eq:limit_problem_for_1}
        \begin{aligned}
            \int_\Omega{\dot{u}(t) \varphi(t)}\,dx
            -\int_\Omega{w_1 \varphi(0)}\,dx
            &-\int_0^t{\int_\Omega{\dot{u} \dot{\varphi}}\,dx}\,d\tau
            +\langle \xi_t, \varphi\rangle_{\mathcal{V}'_{1,t}\times\mathcal{V}_{1,t}}\\
            &\qquad+\int_0^t{\int_{\R^d}{\left(A D u\cdot D \varphi + \nu B D \dot{u} \cdot D \varphi\right)}\,dx}\,d\tau
            =\int_0^t{\int_{\Omega}{g\varphi}\,dx}\,d\tau
        \end{aligned}
    \end{equation}
    for all $\varphi\in\mathcal{V}_{1,t}$, we are going to follow the same arguments as those used to prove Proposition \ref{prop:limit_problem_when_varepsilon_to_0}.

    We start by observing that since $w_0\in H^1_0(\Omega)$, then for every $t\in[0,T]$ we have
    \begin{equation}
        \begin{aligned}
            &\frac{1}{2}\|\dot{u}_s(t)\|^2_{L^2(\Omega)}+\frac{a_*}{2}\|D^s u_s(t)\|^2_{L^2(\R^d;\R^d)}+b_*\|D^s \dot{u}_s\|^2_{L^2(0,T;L^2(\R^d;\R^d))}\\
            &\qquad\qquad\leq \liminf_{\varepsilon\to 0}{\left(\frac{1}{2}\|\dot{u}^\varepsilon(t)\|^2_{L^2(\Omega)}+\frac{a_*}{2}\|D^s u^\varepsilon(t)\|^2_{L^2(\R^d;\R^d)}+b_*\|D^s \dot{u}^\varepsilon\|^2_{L^2(0,T;L^2(\R^d;\R^d))}\right)}\leq C.
        \end{aligned}
    \end{equation}
    This then allow us to say that there exists a function $u\in H^1(0,T;H^1_0(\Omega))$
    \begin{align}
        &u_s\rightharpoonup u \quad \mbox{ in } H^1(0, T; H^{\sigma}_0(\Omega))\\
        &D^s u_s(t)\rightharpoonup D u(t) \quad \mbox{ in } L^2(\Omega) \mbox{ for all } t\in[0,T]\label{eq:weak_convergence_Ds_us_in_L2}\\
        & u_s(t)\to u(t) \quad \mbox{ in } L^2(\Omega) \mbox{ for all } t\in[0,T].
    \end{align}
    On the other hand, just like we did in the proof of Proposition \ref{prop:limit_problem_when_varepsilon_to_0}, since $\dot{u}_s(t)$ is uniformly bounded in $L^2(\Omega)$ and $\dot{u}_s$ is uniformly bounded in $W^{1,1}(0,T;H^{-k}(\Omega))\subset BV(0,T;H^{-k}(\Omega))$, we can use Helly's theorem, to conclude that
    \begin{equation}\label{eq:weak_convergence_dot_us_in_L2}
        \dot{u}_s(t)\rightharpoonup \dot{u}(t) \quad \mbox{ in } L^2(\Omega) \mbox{ for all } t\in[0,T]
    \end{equation}
    Moreover, since $\dot{u}_s\in L^2(0,T; H^{\sigma}_0(\Omega))\cap W^{1,1}(0,T;H^{-k}(\Omega))$, we can use Aubin-Lions' theorem and deduce that
    \begin{equation*}
        \dot{u}_s\to\dot{u} \mbox{ in } L^2(0,T; H^\sigma_0(\Omega)).
    \end{equation*}
    In addition, we have a uniform bound for $\xi_s$ in ${\mathcal{V}'_{1,t}}$,
    \begin{equation*}
        \|\xi_{s,t}\|_{\mathcal{V}'_{1,t}}\leq C\|\xi_{s,t}\|_{\mathcal{V}'_{s,t}}\leq \liminf_{\varepsilon\to 0}{C\|\beta^\varepsilon(u^\varepsilon)\|_{\mathcal{V}'_{s,t}}}\leq C'.
    \end{equation*}
    This allow us to say that there exists a distribution $\xi\in \mathcal{V}'_{1,t}$ such that
    \begin{equation*}
        \xi_{s,t}\rightharpoonup\xi_t \mbox{ in } \mathcal{V}'_{1,t}.
    \end{equation*}
    Then, by making use of the Lemma \ref{lemma:strong_convergence_Ds_D_fixed_function} and of all these limits  in \eqref{eq:limit_problem_s_fixed} with the restriction of $\varphi\in\mathcal{V}_{1,t}\subset\mathcal{V}_{s,t}$ we deduce \eqref{eq:limit_problem_for_1}.

    Finally we just have to check that $\xi_t\in\beta_{1,t}(u)$. Since $u_s\to u$ in $L^2(Q_t)$ and $Q_t$ has finite measure in $\R^d\times\R$, we may extract a subsequence such that $u_s\to u$ a.e. in $Q_t$. Then we use the fact that $j$ is lower semicontinuous, that is $j(u)\leq\liminf_{s\to 1}{j(u_s)}$ a.e. in $Q_t$, and we apply Fatou's lemma to deduce
    \begin{equation*}
        \mathcal{J}(u)=\int_0^t{\int_\Omega{j(u)}\,dx}\,d\tau\leq \int_0^t{\int_\Omega{\liminf_{s\to 1}j(u_s)}\,dx}\,d\tau\leq \liminf_{s\to 1}{\int_0^t{\int_\Omega{j(u_s)}\,dx}\,d\tau}=\liminf_{s\to 1}{\mathcal{J}(u_s)}.
    \end{equation*}
    Moreover, since $\mathcal{V}_{1,t}\subset\mathcal{V}_{s,t}$, $\mathcal{V}_{s,t}'\subset\mathcal{V}_{1,t}'$, and $\xi_{s,t}\rightharpoonup\xi_t$ in $\mathcal{V}_{1,t}'$, then for every $\varphi\in \mathcal{V}_{1,t}$ we have
    \begin{equation*}
        \lim_{s\to 1}{\langle \xi_{s,t},\varphi\rangle_{\mathcal{V}'_{s,t}\times\mathcal{V}_{s,t}}}=\lim_{s\to 1}{\langle \xi_{s,t},\varphi\rangle_{\mathcal{V}'_{1,t}\times\mathcal{V}_{1,t}}}=\langle \xi_t,\varphi\rangle_{\mathcal{V}'_{1,t}\times\mathcal{V}_{1,t}}.
    \end{equation*}
    This limit, combined with the lower semicontinuity of $\mathcal{J}$ and the fact $\xi_{s,t}\in\beta_{s,t}(u_s)$, yields
    \begin{equation}
        \begin{aligned}
            \mathcal{J}(u)+\langle\xi_t, \varphi\rangle_{\mathcal{V}'_{1,t}\times\mathcal{V}_{1,t}}+\mathcal{J}(\varphi)
            &\leq \liminf_{s\to 1}{\left(\mathcal{J}(u_s)+\langle\xi_{s,t}, \varphi\rangle_{\mathcal{V}'_{s,t}\times\mathcal{V}_{s,t}}+\mathcal{J}(\varphi)\right)}\\
            &\qquad\qquad\leq \liminf_{s\to 1}{\langle\xi_{s,t}, u_s\rangle_{\mathcal{V}'_{s,t}\times\mathcal{V}_{s,t}}}
            \leq \limsup_{s\to 1}{\langle\xi_{s,t}, u_s\rangle_{\mathcal{V}'_{s,t}\times\mathcal{V}_{s,t}}}.
        \end{aligned}
    \end{equation}
    To conclude the proof we only need to check that
    \begin{equation*}
        \limsup_{s\to 1}{\langle \xi_{s,t}, u_s\rangle_{\mathcal{V}'_{s,t}\times\mathcal{V}_{s,t}}}\leq \langle \xi_t, u\rangle_{\mathcal{V}'_{1,t}\times\mathcal{V}_{1,t}}
    \end{equation*}
    holds, but this follows similarly to \eqref{eq:inequality_for_limsup} as in the proof of Theorem \ref{thm:existence_weak_solutions}.  
\end{proof}

\subsection{Convergence of the very weak solutions in the inviscid obstacle problem}

\begin{theorem}\label{thm:existence_inviscid_solutions}
    Consider a sequence of functions $\{u_s\}$ that correspond to the very weak solutions of the inviscid problem obtained in Theorem \ref{thm:existence_of_solutions_inviscid_problem}. Then there exists a function $u\in L^\infty(0,T; H^1_0(\Omega))\cap H^1(0,T; L^2(\Omega))$ such that $u_s\rightharpoonup u$ in $H^1(0,T; H^1_0(\Omega))$, which is also a very weak solution of the inviscid problem with $s=1$.
\end{theorem}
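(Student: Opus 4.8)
Following the excerpt, my plan is to repeat, for the very weak inequality of Definition \ref{def:vey_weak_solution_inviscid}, the compactness scheme already used for the passages $\varepsilon\to0$, $\nu\to0$ and (for the viscous problem) $s\nearrow1$. First I would collect the bounds that are uniform in $s$. Each $u_s$ is obtained in Theorem \ref{thm:existence_of_solutions_inviscid_problem} as a weak-$\ast$ limit, along $\nu\to0$, of viscous weak solutions; since these satisfy \eqref{eq:uniform_estimate_for_V} with a constant independent of $\nu$ and, because $w_0\in H^1_0(\Omega)$, also independent of $s\in(\sigma,1)$ by Remark \ref{rem:independence_of_s}, one gets
\begin{equation*}
    \|\dot u_s\|_{L^\infty(0,T;L^2(\Omega))}+\|D^s u_s\|_{L^\infty(0,T;L^2(\R^d;\R^d))}\le C,
\end{equation*}
with $C$ independent of $s$. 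Moreover a comparison argument at the level of the viscous approximations (Proposition \ref{prop:estimate_for_X}), preserved in the limits $\varepsilon,\nu\to0$ by lower semicontinuity, bounds $\dot u_s$ in $BV(0,T;H^{-k}(\Omega))$ uniformly in $s$, and the $BV(0,T)$ bound of the Proposition of Section \ref{sec:approximating_problem} (whose constant is independent of $\varepsilon$, $\nu$, $s$) survives the same two limits.

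From these bounds I would extract a subsequence along which, exactly as in Theorem \ref{thm:existence_of_solutions_inviscid_problem} and in the preceding theorem, $u_s\rightharpoonup u$ in $H^1(0,T;L^2(\Omega))$ and, applying Lemma \ref{lemma:bellido_compactness} pointwise in $t$ together with Lemma \ref{lemma:generalized_helly_selection} and a diagonal argument, $u\in L^\infty(0,T;H^1_0(\Omega))\cap H^1(0,T;L^2(\Omega))$ with $u_s(t)\to u(t)$ in $L^2(\Omega)$ and $D^su_s(t)\rightharpoonup Du(t)$ in $L^2(\Omega;\R^d)$ for every $t\in[0,T]$; the uniform $BV(0,T;H^{-k}(\Omega))$ bound then yields $\dot u_s(t)\rightharpoonup\dot u(t)$ in $L^2(\Omega)$ for every $t$, exactly as in Proposition \ref{prop:limit_problem_when_varepsilon_to_0}. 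Passing to a further subsequence, $u_s\to u$ a.e. in $Q_T$, so $u\ge0$ a.e. in $Q_T$, and $\|Du(t)\|_{L^2}\le\liminf_s\|D^su_s(t)\|_{L^2}\le C$ gives the stated regularity.

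Next I would pass to the limit in the very weak inequality of Definition \ref{def:vey_weak_solution_inviscid} written for $u_s$. Fix $\varphi\in\mathcal{V}_1$ with $\varphi\ge0$ and $\mathrm{supp}(\varphi)\subset[0,T)$; since $H^1_0(\Omega)\subset H^s_0(\Omega)$, $\varphi$ is admissible for every $s$. By Lemma \ref{lemma:strong_convergence_Ds_D_fixed_function}, $D^s\varphi\to D\varphi$ strongly in $L^2(0,T;L^2(\R^d;\R^d))$; combining this with the weak convergence $D^su_s\rightharpoonup Du$ in $L^2(Q_T;\R^d)$ (the weak limit being identified with $Du$ via the duality $\int D^su_s\cdot\Phi=-\int u_s\,D^s\!\cdot\Phi$ and $u_s\to u$ in $L^2(Q_T)$) gives
\begin{equation*}
    \int_0^T\int_{\R^d}AD^su_s\cdot D^s\varphi\,dx\,dt\longrightarrow\int_0^T\int_\Omega ADu\cdot D\varphi\,dx\,dt,
\end{equation*}
while $\int_0^T\int_\Omega\dot u_s\dot\varphi\to\int_0^T\int_\Omega\dot u\dot\varphi$ from $\dot u_s\rightharpoonup\dot u$ in $L^2(Q_T)$. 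Hence $u$ satisfies the inequality of Definition \ref{def:vey_weak_solution_inviscid} with $s=1$ for every such $\varphi\in\mathcal{V}_1$.

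It remains to check the initial conditions. Since $u_s(0)=w_0$ for all $s$ and $u_s(0)\to u(0)$ in $L^2(\Omega)$, we get $u(0)=w_0$. For the velocity, I would start from the estimate obtained in the proofs of Proposition \ref{prop:weak_is_very_weak} and Theorem \ref{thm:existence_of_solutions_inviscid_problem}, which for the inviscid $u_s$ and any $0\le\psi\in H^1_0(\Omega)\subset H^s_0(\Omega)$ reads
\begin{equation*}
    \int_\Omega\dot u_s(t)(\psi-u_s(t))\,dx-\int_\Omega w_1(\psi-w_0)\,dx\ge -Ct-Ct^{1/2},
\end{equation*}
with $C$ independent of $s$; letting $s\to1$ (using $\dot u_s(t)\rightharpoonup\dot u(t)$ and $u_s(t)\to u(t)$ in $L^2(\Omega)$) gives the same inequality for $u$. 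Finally, letting $t\searrow0$: the functionals $F^s(t)=\int_\Omega\dot u_s(t)\psi\,dx$ are uniformly bounded in $BV(0,T)$, $\|\dot u_s(t)\|_{L^2(\Omega)}\le C$, and $\dot u_s$ is uniformly bounded in $BV(0,T;H^{-k}(\Omega))$, so Lemma \ref{lemma:limit_bv_functions_and_right_limit} produces $\dot u(0^+)\in L^2(\Omega)$ with $\dot u(t)\rightharpoonup\dot u(0^+)$ in $L^2(\Omega)$ as $t\searrow0$; since $u$ is continuous into $L^2(\Omega)$ with $u(t)\to w_0$, the weak-times-strong product passes to the limit, giving $\int_\Omega(\dot u(0^+)-w_1)(\psi-w_0)\,dx\ge0$ for all $0\le\psi\in H^1_0(\Omega)$. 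The only genuinely fractional step is the passage $D^su_s\rightharpoonup Du$, $D^s\varphi\to D\varphi$ in the bilinear term, handled by Lemmas \ref{lemma:bellido_compactness} and \ref{lemma:strong_convergence_Ds_D_fixed_function}; I expect this identification (together with keeping the various $BV$ constants genuinely independent of $s$) to be the main, though mild, obstacle, everything else being a repetition of the arguments already carried out for the $\varepsilon\to0$ and $\nu\to0$ limits.
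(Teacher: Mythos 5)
Your proposal is correct and follows essentially the same route as the paper: uniform-in-$s$ energy and $BV$ bounds, compactness via Lemma \ref{lemma:bellido_compactness} and Helly-type arguments, passage to the limit in the very weak inequality using $D^s\varphi\to D\varphi$ and $D^su_s\rightharpoonup Du$, and recovery of the initial velocity condition from the $s$-independent estimate \eqref{eq:estimate_for_initial_conditions} together with Lemma \ref{lemma:limit_bv_functions_and_right_limit}. You are in fact slightly more explicit than the paper about the hypotheses ($w_0\in H^1_0(\Omega)$ for the $s$-uniform constants) and about identifying the weak limit of $D^su_s$, but the argument is the same.
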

\begin{proof}
    This is a similar proof to the one of Theorem \ref{thm:existence_of_solutions_inviscid_problem}. In fact, we can use the uniform estimates that are used in the proof of that theorem (recall that these are also uniform with respect to $s$) and Lemma \ref{lemma:bellido_compactness} to get that there exists a function $u\in L^\infty(0,T; H^1_0(\Omega))\cap H^1(0,T; L^2(\Omega))$ such that
    \begin{align}
        &u_s\rightharpoonup u \quad \mbox{ in } H^1(0, T; L^2(\Omega))\\
        &u_s\overset{\ast}{\rightharpoonup} u \quad \mbox{ in } W^{1,\infty}(0, T; L^2(\Omega))\\
        &D^s u_s(t)\rightharpoonup D u_1(t) \quad \mbox{ in } L^2(\Omega) \mbox{ for all } t\in[0,T],\\
        &u_s(t)\to u(t) \quad \mbox{ in } L^2(\Omega) \mbox{ for all } t\in[0,T],\\
        &\dot{u}_s(t)\rightharpoonup \dot{u}(t) \quad \mbox{ in } L^2(\Omega) \mbox{ for all } t\in[0,T].
    \end{align}
    Applying these limits to \eqref{eq:signal_estimate_without_nu}, we deduce that when $s\to 1$ we have
    \begin{equation}
            -\int_0^T{\int_\Omega{\dot{u}_\nu \dot{\varphi}}\,dx}\,dt
            +\int_0^T{\int_{\R^d}{A D u\cdot D^s\varphi}\,dx}\,dt
            \geq (w_1, \varphi(0)-w_0)
            +\int_0^T{\int_{\Omega}{g\varphi}\,dx}\,dt.
    \end{equation}
    Moreover, since due to the fact that the estimate \eqref{eq:estimate_for_initial_conditions} is independent of $s$, we have
    \begin{equation*}
        \int_\Omega{\dot{u}(t)(\psi-u(t))}\,dx-\int_\Omega{w_1(\psi-w_0)}\,dx=\lim_{s\to 1}{\int_\Omega{\dot{u}_s(t)(\psi-u_s(t))}\,dx-\int_\Omega{w_1(\psi-w_0)}\,dx}\geq -Ct-Ct^{1/2},
    \end{equation*}
    for $\psi\in H^s_0(\Omega)$, $\psi\geq0$. Similarly to the proof of Proposition \ref{prop:weak_is_very_weak}, we use the estimates \eqref{eq:estimate_nu_to_0_g}, \eqref{eq:estimate_nu_to_0_velocity} and \eqref{eq:estimate_nu_to_0_A} to get
    \begin{equation*}
        \int_\Omega{\dot{u}_s(t)(\psi-u_s(t))}\,dx-\int_\Omega{w_1(\psi-w_0)}\,dx\geq -Ct-Ct^{1/2}.
    \end{equation*}
    Consider now the functions $F^s:[0,T]\to \R$ defined as
    \begin{equation*}
        F^s(t)=\int_\Omega{\dot{u}_s(t)\varphi}\,dx,
    \end{equation*}
    as well as the function $F:[0,T]\to \R$ defined as
    \begin{equation*}
        F(t)=\int_\Omega{\dot{u}(t)\varphi}\,dx.
    \end{equation*}
    Using exactly the same argument as in the proof of Theorem \ref{thm:existence_of_solutions_inviscid_problem} we conclude, for arbitrary $\psi\in H^1_0(\Omega)$, $\psi\geq0$,
    \begin{equation*}
        \int_\Omega{(\dot{u}(0^+)-w_1)(\psi-w_0)}\,dx\geq 0.
    \end{equation*}
\end{proof}

\begin{remark}
    In \cite{bonafini2021Semilinear}  the authors consider also the homogeneous fractional obstacle problem with a semilinear term $g=W'(u)$, with a suitable potential $W$. As in Theorem \ref{thm:existence_of_solutions_inviscid_problem} we could also obtain a very weak solution to the corresponding semilinear pertubation with the heterogeneous linear operator $\mathscr{A}^s=-D^s\cdot (AD^s\cdot)$, for which a similar stability result as $s\nearrow 1$ as in Theorem \ref{thm:existence_inviscid_solutions} could be obtained.
\end{remark}

\section*{Acknowledgement}

The authors acknowledge the referee's careful reading of the initial manuscript and his suggestions, which allowed the improvement of the final presentation of the work. The authors' research was done under the framework of CMAFcIO, FCT project: UIDB/04561/2020 and UIDP/04561/2020 
and P. M. Campos was supported also by the Portuguese PhD FCT-grant UI/BD/152276/2021.

\end{document}